%% file: main.tex
\documentclass{article}
\usepackage{graphicx} 

\title{Planar 1-ended graphs can be periodically coloured}
\author{Luke Waite}
\date{}

\input{Preamble}

\begin{document}

\maketitle
\begin{abstract}
We conclude an investigation of Abrishami, Esperet, Giocanti, Hamman, Knappe and Möller studying the existence of periodic colourings of locally finite graphs. A colouring of a graph $\Gamma$ is \textit{periodic} if the resulting coloured graph has a finite number of orbits under its colour-preserving automorphisms, as such it is natural to consider those \textit{quasi-transitive graphs} with finite quotient. In the case that the graph is planar and has 1-end we prove that it always permits a periodic proper vertex colouring. This is shown by constructing isometry respecting embedded maps into the Euclidean and hyperbolic planes and leveraging known properties of Euclidean and hyperbolic isometry groups. Moreover, in the case that a graph is Euclidean we show that this can always be done in 5 colours.


\end{abstract}
\section{Introduction}\label{sec:intro}
The main purpose of this paper is to complete a recent investigation started by Abrishami, Esperet, Giocanti, Hamman, Knappe and Möller in \cite{abrishami2025periodiccoloringsorientationsinfinite}. Therein the authors look at when highly-symmetric graphs can be endowed with an additional highly-symmetric structure that respects some of the original symmetries. 
In particular, in this paper we shall focus on whether highly-symmetric \textit{planar} graphs permit the assignment of a highly-symmetric proper vertex colouring.
A highly-symmetric vertex colouring can be taken to mean one such that there are a finite number of orbits under the colour-respecting automorphisms of the resulting coloured graph; such a vertex colouring is called \textit{periodic}. One requirement for such a colouring is that the original graph has a finite number of vertex orbits under its automorphism group to begin with, it is natural to ask which of these \textit{quasi-transitive} graphs permit periodic colourings.
Abrishami and collaborators ask:

\begin{problem}[\cite{abrishami2025periodiccoloringsorientationsinfinite}, Problem 1.1\footnote{also raised as Problem 6.3 in \cite{ESPERETGIOLEG-DUT2024}.}]\label{q:allgraphspermitperiodic?}
    Do all locally-finite quasi-transitive graphs permit periodic colourings?
\end{problem}

It is not difficult to construct examples of these colourings; the 2-colouring of the Cayley graph of $\ZZ^2$ shown in Figure \ref{fig:Z_squared_periodic_colouring} is periodic. Moreover, any proper colouring of a square section of the graph can be extended to a periodic colouring of the whole graph by translation. In fact for any sufficiently `nice' finitely generated group, a periodic colouring of its Cayley graph may be found by colouring the finite quotient graph induced by the action of a finite index proper subgroup, and lifting with respect to the quotient map.

 However, in general they show quasi-transitivity is not sufficient for existence of such colourings, and hence prove the negative to Problem \ref{q:allgraphspermitperiodic?}
 
 The authors achieve this by constructing 1-ended (non-planar) counterexamples. Norin and Przytycki are additionally credited for finding counterexamples that are also Cayley graphs (and hence vertex transitive).
By restricting the hypotheses of the question, the authors also show that all locally-finite quasi-transitive graphs $\Gamma$ with bounded pathwidth do permit a periodic colouring, and hence proving a positive answer for all locally-finite, quasi-transitive graphs with 2 ends. 
In addition, Abrishami and collaborators construct several $\infty$-ended planar graphs that do not permit periodic colourings.
\begin{figure}
    \centering
    \includegraphics[width=0.25\linewidth]{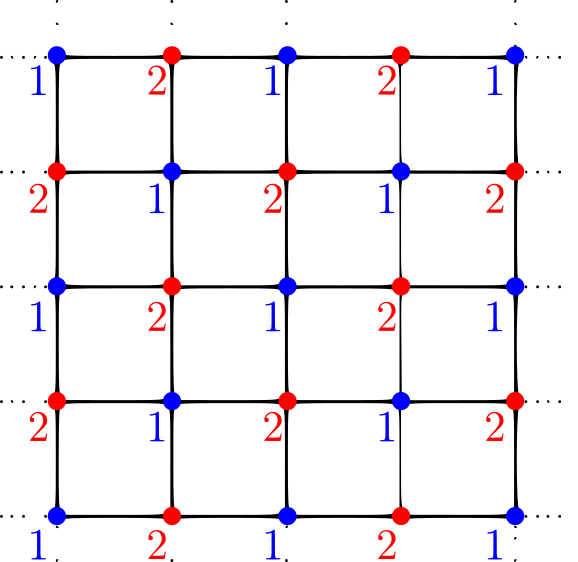}
    \caption{A 2-colouring of the Cayley graph of $\ZZ\times\ZZ$ that is $(2\ZZ\times 2\ZZ)$-periodic.}
    \label{fig:Z_squared_periodic_colouring}
\end{figure}
Following \cite[Proposition 2.1]{Babai97}, which tells us every infinite locally-finite quasi-transitive graph has 1, 2 or $\infty$-many ends, this leaves only one case outstanding.
\begin{problem}[\cite{abrishami2025periodiccoloringsorientationsinfinite}, Problem 6.6]
\label{q:without periodic c?}
Are there locally finite, quasi-transitive planar graphs with 1-end that do not a permit periodic colouring?
\end{problem}
Of these highly symmetric graphs, those that are planar are of particular interest as they naturally arise as periodic tilings of the planar geometries. As such they induce embeddings of their finite quotients onto well-defined quotient surfaces. In this paper we prove the answer to Problem \ref{q:without periodic c?} in the negative. That is:
\begin{theorem}\label{thm:main_intro}
    Let $\Gamma$ be a locally-finite, quasi-transitive planar graph with 1-end. Then $\Gamma$ permits a periodic proper vertex colouring.
\end{theorem}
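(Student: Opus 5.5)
The idea is to realise $\Gamma$ geometrically, so that $\mathrm{Aut}(\Gamma)$ becomes a discrete cocompact group of isometries of the Euclidean or hyperbolic plane. We then use residual finiteness to find a finite-index subgroup that moves every vertex far from its image, and lift a proper colouring of the finite quotient graph.

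First, reduce to the case where $\Gamma$ is 3-connected, or at least where $\mathrm{Aut}(\Gamma)$ acts by homeomorphisms of a canonical embedding. A locally finite, quasi-transitive, 1-ended planar graph has only finitely many orbits of separations of small order. We will either pass to a canonical 3-connected "torso" structure (via Tutte-type decompositions of quasi-transitive graphs, where 1-endedness forces finite low-order separations), or add edges invariantly (a triangulation-type completion of faces) so as to obtain a 3-connected, quasi-transitive, 1-ended planar supergraph $\Gamma'$. Adding edges only makes proper colouring harder, so a periodic colouring of $\Gamma'$ restricts to one of $\Gamma$, provided $\mathrm{Aut}(\Gamma')$ contains a finite-index subgroup of $\mathrm{Aut}(\Gamma)$. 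By Whitney's theorem, $\Gamma'$ has an essentially unique embedding, so every automorphism extends to a homeomorphism of the plane. Since the graph is 1-ended and locally finite, the faces are finite, and the face-size and degree data are periodic.

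Next, build an isometry-respecting embedding. Take the quasi-transitive set of faces and form the $\mathrm{Aut}$-invariant piecewise geometric metric surface in which each $k$-gonal face is a regular polygon of an appropriately chosen side length. Alternatively, apply the Koebe–Andreev–Thurston circle packing uniformisation for 1-ended triangulations, as in He–Schramm. This yields a simply connected Riemann surface: $\mathbb{E}^2$ or $\mathbb{H}^2$ (not the sphere, since $\Gamma$ is infinite). By uniqueness of the packing up to isometry, each automorphism is realised by a Euclidean or hyperbolic isometry. We thus obtain a homomorphism $\mathrm{Aut}(\Gamma')\to\mathrm{Isom}(X)$ with finite kernel, whose image $G$ is discrete and acts cocompactly because of quasi-transitivity.

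Finally, $G$ is a finitely generated linear group: a crystallographic group, or a Fuchsian group inside $\mathrm{PSL}_2(\mathbb{R})$ extended by an orientation-reversing element. By Malcev's theorem (or Bieberbach's in the Euclidean case), $G$ is residually finite. Only finitely many elements of $G$ move a fixed fundamental domain's vertices within graph distance $2$, and the kernel of the action is finite. Hence there is a finite-index normal subgroup $N$, torsion-free and acting freely on the vertices, such that for every vertex $v$ and every $1\neq g\in N$, we have $d(v,gv)\ge 3$. The quotient $\Gamma'/N$ is then a finite simple graph, and $\Gamma'\to\Gamma'/N$ is a covering map. Properly colour the finite quotient and pull the colouring back. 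The lifted colouring is proper because the covering is locally injective, and $N$ preserves it, so it has finitely many orbits under colour-preserving automorphisms.

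The main obstacle is the first two steps: making the geometric realisation genuinely $\mathrm{Aut}$-equivariant. When $\Gamma$ is not 3-connected, automorphisms need not extend to the plane, and the invariant completion must be chosen canonically. I would first attempt face-triangulation using the boundary walks of faces in a canonical embedding, invoking 1-endedness to ensure that each face is bounded.
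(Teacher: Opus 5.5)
Your treatment of the 3-connected case is sound, and in one respect tidier than the paper's. Once the automorphisms act as a discrete cocompact isometry group (the paper gets this from Babai's theorem rather than circle packing), you choose a finite-index \emph{normal} subgroup avoiding the finite set of elements that move one of finitely many orbit representatives by graph distance at most $2$. That handles all conjugates at once and treats $\EE^2$ and $\HH^2$ uniformly. The paper instead uses a large translation sublattice and Thomassen's theorem in the Euclidean case, which gains the $5$-colour bound, and Edmonds--Ewing--Kulkarni plus residual finiteness in the hyperbolic case.

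The genuine gap is the non-3-connected case, which you flag but do not resolve, and the completion route you favour cannot work as stated. A non-3-connected $\Gamma$ generally has no canonical embedding, because pieces behind $1$- and $2$-separations flip independently, so there are no canonical faces to triangulate. Moreover, your proviso that $\mathrm{Aut}(\Gamma')$ contain a finite-index subgroup of $\mathrm{Aut}(\Gamma)$ is unattainable in general. Take the square grid with a pendant triangle $v a_v b_v$ at each grid vertex $v$. The independent swaps $a_v\leftrightarrow b_v$ generate an infinite subgroup $K\cong\prod_v \ZZ/2$ of $\mathrm{Aut}(\Gamma)$. Every finite-index $H\le\mathrm{Aut}(\Gamma)$ meets $K$ non-trivially, so it maps some $a_v$ to its neighbour $b_v$. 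Hence no proper colouring of this $\Gamma$ is invariant under any finite-index subgroup of $\mathrm{Aut}(\Gamma)$; indeed such $H$ has infinite vertex stabilisers, which cannot embed in the finite stabilisers of a locally finite 3-connected planar $\Gamma'$. What you need is a quasi-transitive subgroup of $\mathrm{Aut}(\Gamma)$, typically of infinite index, with finite stabilisers. Producing it is the missing idea. Your torso option stops just short of it, since a colouring or group action on the torso says nothing yet about the vertices in the removed pieces.

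The paper supplies this step as follows. The Mac Lane/Babai atom reduction gives a 3-connected, 1-ended, quasi-transitive topological subgraph $\Gamma_m$ (your torso). Babai's theorem embeds it so that the group $\overline{G}$ induced by $\mathrm{Aut}(\Gamma)$ on $V(\Gamma_m)$ acts by isometries. The removed atoms are then re-inserted equivariantly, in small discs at their cut vertices or in small balls about the virtual edges that replaced them, giving an embedding of all of $\Gamma$ with a quasi-transitive isometric action. Theorems~\ref{thm:main_Euclid} and~\ref{thm:main_Hyp} need only such an action on a 1-ended map with locally finite vertex set, not 3-connectivity.

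Since $\overline{G}$ is a quotient of $\mathrm{Aut}(\Gamma)$, not a subgroup, some lifting back to $\Gamma$ is unavoidable. In your framework it can be done without re-embedding. Pass to a torsion-free finite-index $\overline{N}\le\overline{G}$, which acts freely on the vertices and edges of $\Gamma_m$. The rest of $\Gamma$ is a union of finite pieces hanging at vertices and (virtual) edges of $\Gamma_m$. For a representative anchor $y$ of each $\overline{N}$-orbit and each $\bar h\in\overline{N}$, record the restriction to the piece at $y$ of some lift of $\bar h$ to $\mathrm{Aut}(\Gamma)$. Transporting the pieces along these identifications lifts $\overline{N}$ isomorphically to a subgroup of $\mathrm{Aut}(\Gamma)$ acting quasi-transitively with trivial stabilisers, and your residual-finiteness step then applies verbatim.
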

Combined with the work in \cite{abrishami2025periodiccoloringsorientationsinfinite}, this  completes the discussion of periodic vertex colourings for locally-finite, quasi-transitive planar graphs. That is
\begin{corollary}
    Let $\Gamma$ be a locally-finite, quasi-transitive planar graph with finitely many ends. Then $\Gamma$ admits a periodic proper vertex colouring.
\end{corollary}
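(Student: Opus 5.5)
The plan is to reduce the statement to a question about discrete groups of isometries of the plane. First I realise $\Gamma$ as the $1$-skeleton of a tiling of $\mathbb{E}^2$ or $\mathbb{H}^2$ so that $\mathrm{Aut}(\Gamma)$ acts by isometries. Then I find a finite-index subgroup of that action which acts freely and cocompactly. Colouring the finite quotient properly and lifting gives the periodic colouring.

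\textbf{Step 1: obtain an equivariant embedding.} Since $\Gamma$ is locally finite, quasi-transitive, planar and $1$-ended, I expect it to be quasi-isometric to a plane. I would use a Whitney-type argument for $1$-ended quasi-transitive planar graphs, after first passing to a $3$-connected ``core'' if necessary. The goal is an embedding that is essentially unique, so that every automorphism extends to a homeomorphism of the plane preserving the face structure. This yields a planar map whose faces are all of bounded size, and a cocompact action of $G=\mathrm{Aut}(\Gamma)$ on it.

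Next I uniformise. Give each face a regular polygon metric with unit side lengths, equivariantly; this is possible since there are finitely many face orbits. Take the resulting $G$-invariant, complete, simply connected polygonal surface and pass to the conformal structure, for example by a circle-packing or Koebe--Andreev--Thurston type argument. The result is an embedding into $\mathbb{E}^2$ or $\mathbb{H}^2$ in which $G$ acts by isometries, possibly after replacing $G$ by its orientation-preserving index-$\le 2$ subgroup and allowing reflections back in. Local finiteness and quasi-transitivity make the image of $G$ a discrete cocompact subgroup of $\mathrm{Isom}(\mathbb{X})$. Here $\mathbb{X}=\mathbb{E}^2$ or $\mathbb{H}^2$, chosen according to the sign of the curvature or by amenability/growth.

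\textbf{Step 2: find a torsion-free finite-index subgroup.} Let $\Lambda$ be the image of $G$, a cocompact planar crystallographic or Fuchsian group. The kernel of $G\to\Lambda$ is finite, and is trivial if the embedding is faithful. By Bieberbach's theorem in the Euclidean case, $\Lambda$ contains a finite-index translation lattice $\mathbb{Z}^2$. In the hyperbolic case, Selberg's lemma applied to the finitely generated linear group $\Lambda\le \mathrm{PGL}_2(\mathbb{R})$ gives a torsion-free finite-index normal subgroup $H$.

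Torsion-freeness alone does not let us lift a colouring. A translation may still map a vertex to a neighbour, and then the quotient $\Gamma/H$ has loops, so it cannot be properly coloured. So I further pass to a deeper finite-index subgroup $H'$ whose nontrivial elements have displacement larger than, say, $3\max_e \mathrm{length}(e)$. In the Euclidean case I take $H'=m\mathbb{Z}^2$. In the hyperbolic case I use residual finiteness of $\Lambda$ (Malcev) to avoid the finitely many short elements; there are finitely many by discreteness and cocompactness. Then $\Gamma\to\Gamma/H'$ is a covering of finite simple graphs that is injective on closed neighbourhoods.

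\textbf{Step 3: colour and lift.} Properly colour the finite graph $\Gamma/H'$ with any number of colours, for instance $\Delta+1$, and pull the colouring back to $\Gamma$. Adjacent vertices of $\Gamma$ map to adjacent, hence distinct, vertices of the quotient, so the lifted colouring is proper. It is preserved by $H'$, which has finitely many vertex orbits because it has finite index in a cocompact group. Hence the colouring is periodic. The kernel of $G\to\Lambda$ is harmless, since we only need the group $H'$ to act.

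\textbf{Main obstacle.} I expect Step 1 to be the hardest part, namely the existence of an embedding in which all automorphisms act by isometries. Low connectivity can make the embedding non-unique, and automorphisms that are not induced by plane homeomorphisms may appear. My first attempt would be to reduce to the $3$-connected case via a canonical decomposition into $2$-separations, or else to replace $\Gamma$ by a $3$-connected quasi-transitive planar supergraph on the same vertex set. A proper colouring periodic for a finite-index subgroup of the supergraph's automorphism group restricts to a proper colouring of $\Gamma$. However, the subgroup must also act on $\Gamma$, so the supergraph has to be built canonically, and this is exactly where the care is needed.
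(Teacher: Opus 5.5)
Your argument covers only $1$-ended graphs; Step 1 begins by assuming $\Gamma$ is $1$-ended. The statement, however, is about graphs with finitely many ends. The graphs here are infinite, so Babai's result (an infinite locally finite quasi-transitive graph has $1$, $2$ or infinitely many ends) leaves the $2$-ended case untreated. If finite graphs were allowed, there would also be the trivial finite case.

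Your machinery cannot be stretched to cover the $2$-ended case. Babai's equivariant embedding theorem requires at most one end, and your Step 1 needs bounded faces and a cocompact action on $\mathbb{E}^2$ or $\mathbb{H}^2$. Neither is available for a graph quasi-isometric to a line, such as the ladder $\ZZ\times K_2$.

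The paper's proof of this corollary is exactly this case split. The $1$-ended case is its main theorem. The $2$-ended case is quoted from Abrishami et al., who show that quasi-transitive graphs of bounded pathwidth, in particular all $2$-ended ones, admit periodic colourings.

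Alternatively, you can handle the $2$-ended case directly, in the spirit of your Steps 2--3:
\begin{itemize}
\item Such a graph has an automorphism $t$ of infinite order such that $\langle t\rangle$ has finitely many vertex orbits, and these orbits are necessarily infinite.
\item By local finiteness, only finitely many $k\neq 0$ make $t^k v_i$ adjacent to one of the orbit representatives $v_i$.
\item For $n$ beyond all such $k$, $\Gamma/\langle t^n\rangle$ is a finite loopless graph. Any proper colouring of it lifts to a $\langle t^n\rangle$-invariant proper colouring of $\Gamma$.
\end{itemize}

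Within the $1$-ended case, your route in the $3$-connected situation is essentially the paper's. You use Babai's embedding, then a torsion-free finite-index subgroup (Bieberbach/Selberg for you, translation lattices and Edmonds--Ewing--Kulkarni in the paper). You then use residual finiteness to remove short elements, and lift a colouring of the loopless finite quotient.

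The non-$3$-connected case, which you yourself flag as the main obstacle, is left unresolved. Your remark that the kernel of $G\to\Lambda$ is finite and harmless is false there. Attach to every edge $uv$ of the square grid two new adjacent vertices, both joined to $u$ and $v$. The result is $1$-ended, planar, locally finite and quasi-transitive. For each grid edge, $\mathrm{Aut}(\Gamma)$ contains a transposition that fixes everything else and swaps two adjacent vertices. So no proper colouring is invariant under the full preimage of your $H'$. You need an actual subgroup of $\mathrm{Aut}(\Gamma)$ that realises a finite-index part of $H'$ and acts with finitely many orbits.

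The paper handles this in three steps:
\begin{itemize}
\item It applies the Mac~Lane/Babai atom reduction to obtain a $3$-connected, $1$-ended, quasi-transitive topological subgraph $\Gamma_m$.
\item It embeds $\Gamma_m$ via Babai's theorem.
\item It reinserts the atoms equivariantly under a subgroup $\overline{G}\le\mathrm{Aut}(\Gamma)$ acting by isometries.
\end{itemize}
The Euclidean and hyperbolic colouring theorems then apply to an embedding of all of $\Gamma$.
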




We close this introduction by giving a brief survey of extant results.
We shall make use of the following theorem of Babai, which asserts in the case that the graphs are 3-connected we may identify their automorphism groups with groups of Euclidean or hyperbolic plane isometries. 
\begin{theorem}[\cite{Babai97}, Theorem 4.2]\label{thm:Babai quasi-transitive}
    Let $\Gamma$ be a locally-finite 3-connected quasi-transitive planar graph with at most one end. Then $\Gamma$ has an embedding on a natural geometry such that all automorphisms of $\Gamma$ are induced by isometries of the geometry.
\end{theorem}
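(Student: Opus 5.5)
The plan is to turn the combinatorial symmetry of $\Gamma$ into a group of homeomorphisms of a surface, and then use uniformization of $2$-orbifolds to make that group act by isometries. The finite case (zero ends) is the classical one. By Whitney's theorem a finite $3$-connected planar graph has a unique embedding in $S^2$ up to homeomorphism. Mani's theorem realizes it as a convex polyhedron whose symmetry group induces all of $\mathrm{Aut}(\Gamma)$. This gives the spherical geometry, so I would assume from now on that $\Gamma$ is infinite and $1$-ended.

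\textbf{Step 1: an invariant cell structure.} First I want an embedding of $\Gamma$ in the plane all of whose faces are bounded by finite cycles, and which is combinatorially unique, so that every automorphism preserves the facial structure. For $3$-connected locally finite graphs, Imrich's extension of Whitney's theorem gives uniqueness of the embedding among those with no accumulation points. Being $1$-ended then forces every face boundary to be a finite cycle: an infinite facial walk would be a two-way infinite path separating the graph into two infinite sides, producing a second end. Gluing a disc into each face gives a $2$-complex $X$ homeomorphic to $\mathbb{R}^2$. Quasi-transitivity and local finiteness imply there are finitely many $\mathrm{Aut}(\Gamma)$-orbits of faces, and a uniform bound on face lengths.

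\textbf{Step 2: extend to homeomorphisms.} Next I make $G=\mathrm{Aut}(\Gamma)$ act on $X$ by homeomorphisms. Take the barycentric subdivision of $X$ and extend each automorphism simplicially, sending the barycentre of a face to the barycentre of its image. The action is faithful, since $3$-connectedness means a nontrivial automorphism cannot fix every vertex. It is properly discontinuous, because flag stabilizers are finite by local finiteness. It is cocompact by Step 1. Hence $X/G$ is a compact $2$-orbifold whose orbifold universal cover is $X\cong\mathbb{R}^2$; in particular the orbifold is good and non-spherical.

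\textbf{Step 3: geometrize.} By the classification and uniformization of compact good $2$-orbifolds, there is a $G$-equivariant homeomorphism $\phi\colon X\to Y$ conjugating $G$ to a discrete cocompact group of isometries of $Y$. Here $Y=\mathbb{E}^2$ if the orbifold Euler characteristic of $X/G$ is zero, and $Y=\mathbb{H}^2$ if it is negative. Orientation-reversing elements are allowed, since uniformization also covers non-orientable orbifolds with reflector lines. Then $\phi$ restricted to the $1$-skeleton is an embedding of $\Gamma$ in $Y$. Each automorphism of $\Gamma$ is the restriction of the isometry $\phi g\phi^{-1}$, which is what the statement requires.

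The main obstacle is Step 3, specifically making it rigorous that a discrete cocompact action by homeomorphisms on the plane is conjugate to an isometric action. This is essentially a Nielsen-realization or orbifold-uniformization statement. A cleaner route I would try first avoids the orbifold machinery: triangulate $X$ invariantly (the barycentric subdivision is a triangulation) and use the discrete uniformization of infinite bounded-degree triangulations via circle packings (He--Schramm). The maximal circle packing fills either $\mathbb{E}^2$ or $\mathbb{H}^2$ and is unique up to isometry (up to similarity in the Euclidean case). By that uniqueness, every automorphism of the triangulation is induced by an isometry, or by a similarity in the Euclidean case. Cocompactness then forces those similarities to be isometries.
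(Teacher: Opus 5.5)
\textbf{Gap in Step 1.} The paper does not prove this statement; it quotes it from Babai. So I am assessing your argument on its own terms. Your overall route is sound. It runs from the unique VAP-free embedding, to an invariant cell structure, to a proper cocompact action on $\mathbb{R}^2$, and finally to orbifold uniformization or He--Schramm.

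Step 3 is less of an obstacle than you expect. Once $X/G$ is a compact good $2$-orbifold, the classification together with uniqueness of the universal orbifold cover gives the equivariant homeomorphism directly. The real gap is in Step 1. That is exactly where quasi-transitivity has to enter, and you do not use it there.

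Your claim that one-endedness forces every face to be bounded by a finite cycle is false. Your justification also fails: if a face $f$ is bounded by a double ray $P$, then $f$ itself is one side of $P$ and contains no vertices, so $P$ does not split $\Gamma$ into two infinite pieces. A concrete counterexample is the half-plane grid $\mathbb{Z}\times\mathbb{N}$. It is locally finite, $3$-connected and one-ended, and its VAP-free embedding has the lower half-plane as a face bounded by $\mathbb{Z}\times\{0\}$; it is simply not quasi-transitive. Without finite faces, $X$ is not a plane built from compact cells and the action is not cocompact. Then $X/G$ is not a compact orbifold, so Steps 2 and 3 lose their input. The paper itself imports precisely this fact as Babai's Lemma 2.2 when proving that $V(M)$ is a locally finite collection.

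\textbf{How to repair it.} One correct argument has two parts.

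(i) Suppose $f_1,f_2$ are two distinct infinite faces. Build a Jordan curve in $S^2$ from three pieces: an arc in $f_1$ from $\infty$ to a vertex of $\partial f_1$, a finite path $Q$ in $\Gamma$ to a vertex of $\partial f_2$, and an arc in $f_2$ back to $\infty$. The two tails of the double ray $\partial f_1$ lie on opposite sides of this curve. Hence $\Gamma-V(Q)$ has two infinite components, contradicting one end.

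(ii) So there is at most one infinite face. Since automorphisms preserve the facial structure of the unique embedding, that face is $\mathrm{Aut}(\Gamma)$-invariant. Therefore distance to its boundary double ray $P$ is invariant, and quasi-transitivity bounds it. Then $\Gamma$ lies in a bounded neighbourhood of $P$, on which $\mathrm{Aut}(\Gamma)$ acts with finitely many orbits. A routine coarse argument then shows $\Gamma$ is quasi-isometric to $\mathbb{Z}$, hence two-ended, which is a contradiction.

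\textbf{A smaller point in Step 2.} Proper discontinuity needs finite vertex stabilisers, and local finiteness does not give this; regular trees have infinite vertex stabilisers. What gives it is rigidity of the unique embedding. An automorphism fixing a vertex, an incident edge and an incident face fixes the rotation there, and by connectivity it then fixes everything. Hence $|\mathrm{Aut}(\Gamma)_v|\le 2\deg v$.
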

In a paper of Timár concerning colourings of random planar maps, a similar question to that of Problem \ref{q:without periodic c?} is asked (\cite{TIMÁR_2011}, Question 4.8) and a sketch argument is provided for graphs that embed into the Euclidean plane under Theorem \ref{thm:Babai quasi-transitive}. 
A crutch of Timár's argument is a powerful theorem of Thomassen concerning 5-colourability of certain maps on closed orientable surfaces.
\begin{theorem}[\cite{Thomassen}]\label{thm:Thomassen}
    Let $\Gamma$ be an embedded graph on a closed orientable surface of genus $g>0$ such that all non-contractible cycles contain at least $2^{14g+6}$ distinct edges. Then $\Gamma$ admits a proper 5-colouring.
\end{theorem}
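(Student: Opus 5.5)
We would prove this by induction on the genus $g$, reducing to planar $5$-colouring and precolouring extension results in the style of Thomassen's proof that planar graphs are $5$-choosable. The exponential bound $2^{14g+6}$ is meant to absorb a constant-factor loss in edge-width at each inductive step. For the base of the argument ($g=0$, or any time we reach a planar piece) we use the planar $5$-colour theorem in its strong list form. This form allows two adjacent precoloured vertices on the outer face and lists of size $3$ on the outer face, which lets colourings be extended from a boundary.

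\textbf{Inductive step.} Given $\Gamma$ on a surface $S$ of genus $g>0$ with edge-width $w\ge 2^{14g+6}$, first pass to a minimal counterexample. In particular we may assume every vertex has degree at least $5$ and $\Gamma$ is a triangulation-like map. Choose a shortest non-contractible cycle $C$ (a geodesic cycle, so distances along $C$ agree with distances in $\Gamma$). Then do the following.

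\begin{enumerate}
\item Take a thin collar $N$ of $C$: the vertices within bounded distance of $C$. Cut $S$ along $C$ and cap the resulting holes with discs, giving a surface $S'$ of genus $g-1$, or two surfaces of smaller total genus if $C$ is separating.
\item Show that the graph $\Gamma'=\Gamma-N$ embedded in $S'$ still has edge-width at least $2^{14(g-1)+6}$. The key comparison is that any non-contractible cycle of $S'$ which becomes contractible in $S$ must wind through the capped discs. It can therefore be rerouted along $C$, and since $C$ is geodesic this costs at most a factor of about $2^{14}$. This is where the exponent is spent.
\item Apply induction to $5$-colour $\Gamma'$.
\item Extend the colouring to $N$. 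Vertices of $N$ adjacent to coloured vertices receive reduced lists, and $N$ is a planar annulus-like graph, so after cutting it at one edge it becomes a near-triangulation of a disc.
\end{enumerate}

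\textbf{Main obstacle.} The hard part will be the final extension across the annulus $N$. The difficulty is that the two boundary cycles of $N$ are both precoloured, and an annulus with two precoloured boundaries need not admit an extension. Our first attempt would be to choose the collar width carefully, so that the precoloured boundary vertices are far from each other in $N$. Then a "far-apart precolouring extension" lemma for planar $5$-list-colouring should apply: if precoloured subgraphs in a planar graph are pairwise at large distance, the colouring extends, proved by induction using Thomassen's chord and vertex-deletion reductions.

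\textbf{Closing the annulus.} The remaining cut edge is handled by colouring a short path crossing the annulus first. This gives the disc case with two adjacent precoloured outer vertices, exactly as in Thomassen's theorem. Balancing the collar width needed here against the edge-width lost in step 2 then determines the constant $14$.
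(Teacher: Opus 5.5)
The paper does not prove this statement; it is quoted from Thomassen. Your outline therefore has to stand on its own, and it has a genuine gap exactly where the content of the theorem lies, namely step 4.

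\textbf{The extension step fails.} Your induction hypothesis only gives you \emph{some} $5$-colouring of $\Gamma'=\Gamma-N$, with no control near the collar. Nothing then prevents a vertex of $N$ from having neighbours in $\Gamma'$ that use all five colours. Its list is then empty, and no extension exists however wide $N$ is.

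The ``far-apart precolouring extension'' lemma you invoke is false as stated. Precolour the rim of a $5$-wheel with five distinct colours. There is only one precoloured component, so the distance condition is vacuous, yet the hub cannot be coloured. The genuine distant-precolouring results, such as Albertson's theorem, concern isolated precoloured vertices, not precoloured boundaries or cycles.

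Closing the annulus does not rescue this. Cutting $N$ along a crossing path $P$, whose length is at least the collar width, gives a disc whose boundary contains two precoloured copies of $P$, together with vertices whose lists may have fewer than $3$ colours. This is far outside Thomassen's hypotheses, which allow one precoloured edge and require all other outer lists to have size at least $3$. Already an outer vertex with list $\{1,2,3\}$ adjacent to three consecutive precoloured vertices coloured $1,2,3$ blocks the extension.

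\textbf{The accounting in step 2 is wrong, and this shows the architecture cannot work as it stands.} Since $\Gamma'$ avoids $C$ and the capping discs, take a cycle $D$ of $\Gamma'$ that is contractible in $S$. It bounds a disc in $S$, and that disc cannot contain $C$, or else $C$ would be contractible. So the disc lies in $S-C\subset S'$. Hence every cycle of $\Gamma'$ that is non-contractible in $S'$ is already non-contractible in $S$, and no edge-width is lost at all. There is no rerouting along $C$ and no factor $2^{14}$ to spend.

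Consequently, if step 4 worked with a collar width independent of $g$, your induction would prove that edge-width above an absolute constant forces $5$-colourability on every orientable surface. That is false. By Erd\H{o}s there are $6$-chromatic graphs of arbitrarily large girth. Such graphs are non-planar, and any embedding of one has edge-width at least its girth.

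What is missing is a stronger statement carried through the induction that controls the colouring along the cut. One example would be a list-colouring or precolouring-extension statement with special faces whose vertices keep lists of size at least $3$. A genus-dependent loss has to be paid in proving it. Your plan supplies neither the statement nor the mechanism for that loss.
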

As such, Timár's argument provides us with the following strengthening of the main result for those 3-connected graphs that embed into the Euclidean plane under Theorem \ref{thm:Babai quasi-transitive}.
\begin{theorem}\label{thm:Euc_5_col}
    Let $\Gamma$ be a 3-connected, locally-finite, quasi-transitive planar graph with 1-end that embeds into the Euclidean plane under Theorem \ref{thm:Babai quasi-transitive}. Then $\Gamma$ permits a periodic 5-colouring.
\end{theorem}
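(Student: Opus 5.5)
By Theorem \ref{thm:Babai quasi-transitive}, we may regard $\Gamma$ as embedded in the Euclidean plane $\mathbb{E}^2$ so that $\mathrm{Aut}(\Gamma)$ acts by isometries. The plan is to find a finite index subgroup of $\mathrm{Aut}(\Gamma)$ consisting of translations, pass to the quotient torus, make every non-contractible cycle there long enough to apply Theorem \ref{thm:Thomassen}, and lift the resulting $5$-colouring back to $\Gamma$.

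\textbf{Step 1: a translation lattice in $\mathrm{Aut}(\Gamma)$.} Set $G=\mathrm{Aut}(\Gamma)$, viewed as a group of isometries. It is discrete, since the embedding is locally finite and so only finitely many automorphisms can send a fixed vertex into a bounded region. It is cocompact, since $\Gamma$ is quasi-transitive and, being 1-ended and 3-connected, its faces are bounded. Hence $G$ is a crystallographic group. By Bieberbach's theorem, its translation subgroup $T\cong\mathbb{Z}^2$ is normal of finite index in $G$.

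\textbf{Step 2: pass to a deep sublattice.} For $k\in\mathbb{N}$, let $T_k=kT$. This is a subgroup of $T$ of index $k^2$, so it has finite index in $G$. Because $T_k$ acts freely on $\mathbb{E}^2$ by translations, the quotient $\mathbb{E}^2/T_k$ is a flat torus. The graph $\Gamma_k=\Gamma/T_k$ is a finite graph cellularly embedded on this torus, and it is a genuine graph with no loops or multi-edges once $k$ is large.

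\textbf{Step 3: non-contractible cycles are long.} A non-contractible closed walk in $\Gamma_k$ lifts to a path in $\Gamma$ joining some vertex $v$ to $v+t$, where $t\in T_k\setminus\{0\}$. Its Euclidean length is therefore at least $\min_{t\in T_k\setminus\{0\}}|t| = k\lambda$, where $\lambda$ is the length of the shortest nonzero vector of $T$. Edges of $\Gamma$ have Euclidean length bounded by a constant $L$, because $\Gamma$ has finitely many edge orbits under $G$ and $G$ acts by isometries. So every non-contractible cycle in $\Gamma_k$ has at least $k\lambda/L$ edges. Choosing $k$ with $k\lambda/L\ge 2^{20}=2^{14\cdot1+6}$, Theorem \ref{thm:Thomassen} (with $g=1$) gives a proper $5$-colouring $c_k$ of $\Gamma_k$.

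\textbf{Step 4: lift and check periodicity.} Define $c=c_k\circ\pi$, where $\pi\colon\Gamma\to\Gamma_k$ is the quotient map. The colouring $c$ is proper, because every edge of $\Gamma$ maps to an edge of $\Gamma_k$, and the absence of loops in $\Gamma_k$ ensures adjacent vertices never share a colour. Every element of $T_k$ preserves $c$, so the colour-preserving automorphism group contains $T_k$. Since $T_k$ has finite index in $G$ and $\Gamma$ has finitely many $G$-orbits, the vertices of $\Gamma$ fall into finitely many $T_k$-orbits; concretely, their number is at most $|V(\Gamma_k)|$. Hence $c$ is periodic.

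The main obstacle is Step 1, specifically justifying that $G$ is discrete and cocompact so that Bieberbach's theorem applies. Step 3 then depends on the metric comparison between Euclidean length and edge count, which requires the uniform edge-length bound $L$ from quasi-transitivity. I would handle discreteness through local finiteness together with the fact that a non-identity automorphism can fix only a bounded configuration. I would handle cocompactness by noting that a compact union of finitely many representative closed faces has $G$-translates covering the plane.
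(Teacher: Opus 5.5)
Your proof is correct and has the same architecture as the paper's proof of Theorem \ref{thm:main_Euclid}, which writes up Tim\'ar's argument. Both find a finite-index rank-two translation subgroup, pass to a deep sublattice so that the quotient torus has long non-contractible cycles, apply Theorem \ref{thm:Thomassen} with $g=1$, and lift.

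The genuine difference is how you obtain the lattice. You show $G$ is discrete and cocompact (finitely many face orbits, each face bounded) and then quote Bieberbach. The paper instead proves directly in Lemma \ref{lem:wallpaper} that $G$ is a wallpaper group, using only that $V(M)$ is a locally finite collection and that $M$ has one end:
\begin{itemize}
\item if $G$ has no translations, all orbits are finite, so $V(M)$ would be finite;
\item if $G$ has a single translation direction, the vertices in a wide slab transverse to the translation axis would form a finite cutset separating two infinite pieces.
\end{itemize}
Your route is shorter and more standard, but it needs bounded faces. The paper's route needs no face hypothesis. That is why Theorem \ref{thm:main_Euclid} is stated for arbitrary one-ended maps with a locally finite vertex set, and why it can be reapplied in \S\ref{sec:not_3-con} to the hand-built maps $M_{i-1}$.

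In the other direction, your Step 3 improves on the paper. You lift a non-contractible closed walk to a path from $v$ to $v+t$ with $t\in T_k\setminus\{0\}$, which forces at least $k\lambda/L$ edges. This is cleaner and more precise than the paper's condition on paths meeting opposite sides of a fundamental parallelogram.

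Two of your supporting remarks need adjusting, though neither changes the outcome.
\begin{itemize}
\item Bounded faces do not follow from one end and 3-connectivity alone; consider the half-grid $\mathbb{Z}\times\mathbb{N}$ in its standard embedding. They come from quasi-transitivity via Babai's Lemma 2.2, which the paper cites in Lemma \ref{lem:locally finite collection}. That lemma also proves the local finiteness of $V(M)$, which you take for granted.
\item A non-identity automorphism can fix an unbounded set of vertices; a reflection of the square grid fixes a whole line of them. The right reason for discreteness is that vertex stabilisers are finite. An isometry fixing $v$ permutes the finitely many neighbours of $v$, and the subgroup of $O(2)$ preserving a finite nonempty set of nonzero vectors is finite. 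Combined with local finiteness of $V(M)$, this gives that only finitely many elements of $G$ move $v$ into a given bounded set.
\end{itemize}
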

Moreover, in \S\ref{sec:not_3-con} when dealing with those graphs that are not 3-connected, we are able to extend this result to a larger class of graphs that embed into the Euclidean plane with the property that a finite index subgroup of the group of automorphisms is induced by Euclidean isometries.

We shall proceed by leveraging properties of Euclidean and hyperbolic planar isometry groups, in particular the existence of finite index torsion free subgroups. In the Euclidean case we reproduce Timár's argument. By finding torsion free subgroups, which in the Euclidean case are simply 2-generator translation groups, we may define an embedding of the quotient graph onto a sufficiently large torus and apply Theorem \ref{thm:Thomassen}. 
Using classical results on the subgroups of Fuchsian groups we make an analogous construction in the hyperbolic case, though due to the arbitrarily large potential genus of a hyperbolic quotient surface an application of Theorem \ref{thm:Thomassen} does not so easily follow. As such a more direct construction of a periodic colouring is applied. 

To deal with the graphs that are not 3-connected we shall make use of a known reduction technique 
to construct 3-connected topological subgraphs that behave well with respect to the automorphism group of the original graph.
We then build an embedding of the whole graph from the embedding of its 3-connected topological subgraph that is provided by Theorem \ref{thm:Babai quasi-transitive}. 

We first define relevant terms and prove some useful properties the group of automorphisms must satisfy when considered as a group of planar isometries. In \S\ref{sec:Euclid} a formal re-creation of Timár's proof of Theorem \ref{thm:Euc_5_col} is provided. In \S\ref{sec:Hyp}, using properties of hyperbolic isometries and a theorem of Edmonds, Ewing and Kulkarni \cite{Edmonds1982} on finite index torsion free subgroups of Fuchsian groups, we show that all 3-connected graphs that embed into the hyperbolic plane permit periodic colourings. In \S \ref{sec:not_3-con} we deal with those graphs that are not 3-connected, by the aforementioned reduction technique and embedding construction, which concludes the proof of Theorem \ref{thm:main_intro}. 
Finally, in \S\ref{sec:discussion} we discuss some consequences of this work in terms of periodic edge-colourings and orientations, as well as some open problems that naturally arise.


\section{Preliminaries and General properties}\label{sec:Prelim_Gen}
\textbf{Graphs and maps:} We define a graph to be pair $\Gamma=(V,E)$, where $V$ is the vertex set, and $E$ the set of edges represented by unordered pairs $\{u,v\}\subset V$ so that $u$ and $v$ are joined by an edge. In cases where it is useful to specify the edge and vertex sets of a specific graph $\Gamma$ we will write $V(\Gamma)=V$ and $E(\Gamma)=E$. A graph $\Gamma$ is said to be infinite if its vertex set is infinite. A graph is said to be \textit{locally-finite} if the degree at each vertex is finite.
All graphs considered will be infinite, locally-finite, undirected, and without multiple edges or loops.\\
A \textit{map} $M$ is a graph embedded in a surface $X$ so that the vertices $v\in V(M)$ are distinct points of $X$, the edges $\{u,v\}\in E(M)$ are curves $\alpha_{u,v}:[0,1]\rightarrow X$ such that $\alpha_{u_1,u_2}$ and $\alpha_{v_1,v_2}$ intersect at $x\in V(M)$ if and only if $u_i=v_j=x$ for some $i,j\in\{1,2\}$ and, if we cut the surface along the set of all such curves what remains ($X-M$) is a disjoint union of connected components called \textit{faces}, all of which are homeomorphic to open discs.
A graph $\Gamma$ is said to be \textit{planar} if there exists a map $M\subset \EE^2$ and an embedding $\varphi:\Gamma\rightarrow \EE^2$ into the Euclidean plane so that $\varphi(\Gamma)=M$.
It will be useful to distinguish between a combinatorial graph $\Gamma$ and a corresponding embedded map $M$ in a surface $X$.\\
A \textit{path} in a graph $\Gamma$ from a vertex $v_0$ (called its initial vertex) to vertex $v_m$ (its terminal vertex) is a sequence of incident edges $(v_0,v_1),(v_1,v_2),\ldots,(v_{m-1},v_m)$ such that every vertex is visited only once, that is $v_i\neq v_j$ for all $i,j\leq m,$ $i\neq j$.
A path is called a \textit{cycle} if its initial and terminal vertices coincide.
A \textit{ray} $r$ in an infinite graph $\Gamma$ is an infinite one-way path. We say that two rays $r_1,r_2$ are \textit{end-equivalent} if and only if there exists some third ray $r_3$ that contains infinitely many vertices of both $r_1$ and $r_2$. Equivalently we may say that $r_1$ and $r_2$ are end-equivalent if and only if there are infinitely many disjoint paths connecting vertices of $r_1$ to vertices of $r_2$. 
An equivalence class of rays in $\Gamma$ is called an \textit{end} of $\Gamma$, and a graph with exactly $k$ ends is said to be $k$\textit{-ended}. In practice, given any infinite sequence of finite subgraphs $\Sigma_1,\Sigma_2,\ldots\subset \Gamma$ such that $V(\Sigma_{i+1})\subsetneq V(\Sigma_i)$, the number of ends of $\Gamma$ can be found as the limit of the number of infinite connected components of $\Gamma-\Sigma_i$ as $i\rightarrow\infty$. 
A graph is said to be \textit{connected} if there exists a path between any pair of distinct vertices. Throughout, unless otherwise stated all graphs will be assumed to be connected. If the removal of any set of less than $q$ vertices leaves $\Gamma$ connected, then $\Gamma$ is said to be $q$\textit{-connected}. 
A map is said to be locally-finite, $k$-ended or $q$-connected, if its respective graph is.\\
For graphs $\Gamma_1,\Gamma_2$ a \textit{graph isomorphism} is a map $f:V(\Gamma_1)\rightarrow V(\Gamma_2)$ that respects edge adjacency, that is for all $\{u,v\}\in E(\Gamma_1)$ we have that $(f(u),f(v))\in E(\Gamma_2)$.
A \textit{graph automorphism} is a graph isomorphism where $\Gamma_1=\Gamma_2$. The set of all graph automorphisms for a graph $\Gamma$ forms a group under composition, which we will write $\text{Aut}(\Gamma)$. 
For a map $M$ in a surface $X$, a \textit{map automorphism} is a homeomorphism $f:X\rightarrow X$ such that $f(V(M))=V(M)$ and the induced action on $V(M)$ is a graph automorphism.

\textbf{Groups and colourings:} Given a group $G$ acting on a graph $\Gamma$, we say that $G$ acts \textit{transitively} if for all $u,v\in V(\Gamma)$ there exists $g\in G$ so that $gu=v$. An \textit{orbit} of a vertex $v\in V(\Gamma)$, is defined as the set $Gv:=\{gx:g\in G\}$.
In other words a group acts transitively if there is a single orbit of the vertices under the action of $G$. A group $G$ is said to act \textit{quasi-transitively} on $\Gamma$ if the action of $G$ partitions $V(\Gamma)$ into finitely many orbits. If $\text{Aut}(\Gamma)$ acts transitively on $\Gamma$ then we say that the graph $\Gamma$ is \textit{vertex transitive}, or in the case that $\text{Aut}(\Gamma)$ acts quasi-transitively we say that $\Gamma$ \textit{is quasi-transitive} (also called {almost transitive} by some authors).
\begin{remark}
   It follows that any graph that is both locally-finite and quasi-transitive has a vertex set that is at most countably infinite. 
\end{remark}
A \textit{(proper) vertex colouring} $\mathcal{C}$ is a function $\mathcal{C}:V(\Gamma)\rightarrow \{1,\ldots,m\}$, $m<\infty$, such that for all $\{u,v\}\in E(\Gamma)$, $\mathcal{C}(u)\neq\mathcal{C}(v)$. 
Given a (non-traivial) group $H$ acting on a graph $\Gamma$ by graph automorphisms, a (proper) vertex colouring $\mathcal{C}$ is said to be ($H$-)\textit{periodic} if the action on the graph preserves the colouring. That is, for all $h\in H$, $x\in V$
$$\mathcal{C}(hx)=\mathcal{C}(x).$$
In other words, a colouring $\mathcal{C}$ is periodic if the corresponding coloured graph is quasi-transitive.

So that we can apply Babai's Theorem \ref{thm:Babai quasi-transitive}, we first restrict our attention to those graphs that are are 3-connected. Let $\Gamma$ be a locally-finite, 3-connected, quasi-transitive planar graph with 1-end. Let $\varphi:\Gamma\rightarrow \mathcal{X}$ be the embedding inferred by Theorem \ref{thm:Babai quasi-transitive}, and let $M:=\varphi(\Gamma)$ be the embedded map on $\mathcal{X}$, for $\mathcal{X}=\EE^2$ or $\HH^2$. Then  $\text{Aut}(\Gamma)\cong\text{Aut}(M)=:G\leq \text{Isom}(\mathcal{X})$. We may wish to abuse notation somewhat and also refer to $\text{Aut}(\Gamma)$ by $G$.
An immediate consequence is that, if $\{u,v\},\{x,y\}\in E(\Gamma)$ are in the same $\text{Aut}(\Gamma)$ orbit, then the corresponding edge curves in $M$, $\alpha_{u,v}$ and $\alpha_{x,y}$ are isometric.
Inline with the terminology of \cite{Babai97} we define the following;

\begin{definition}
    Suppose a graph $\Gamma$ embeds into a manifold $X$, with embedded map $M\subset X$. $X$ is said to be \textit{plane} with respect to $M$ if every cycle in $M$ partitions $X$ in two parts. An embedding $M$ in $X$ of a graph $\Gamma$ is a \textit{plane embedding} if the manifold $X$ is plane with respect to $M$.
\end{definition}
Both $\EE^2$ and $\HH^2$ can be considered as plane manifolds, and the embedding inferred by Theorem \ref{thm:Babai quasi-transitive} a plane embedding.
Recall in a topological space $X$, a family of subsets $\{S_i\}_i$ is said to be a \textit{locally finite collection} if for any compact set $K\subset X$, $K$ has non-empty intersection with at most finitely many $S_i$. For a map $M$ on a closed surface $X$, we say that $V(M)$ is a locally finite collection \textit{of vertices} if the family of singleton subsets of $V(M)$ is a locally finite collection.

\begin{lemma}\label{lem:locally finite collection}
    Let $\Gamma$ be a locally-finite, 3-connected, quasi-transitive planar graph with 1-end that embeds as a map $M$ on a natural geometry $\mathcal{X}$ ($\mathcal{X}=\EE^2$ or $\HH^2$) under Theorem \ref{thm:Babai quasi-transitive}. Then $V(M)$ is a locally finite collection of vertices.
\end{lemma}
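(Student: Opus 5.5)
Plan: argue by contradiction, combining quasi-transitivity with the fact that $G=\text{Aut}(M)$ acts by isometries of $\mathcal{X}$. The goal is to show that $V(M)$ has no accumulation point in $\mathcal{X}$. This suffices: a compact $K\subset\mathcal{X}$ meeting infinitely many vertices would contain a sequence of distinct vertices, which by compactness has a convergent subsequence, and its limit would be an accumulation point of $V(M)$.

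So suppose distinct vertices $v_1,v_2,\ldots$ converge to a point $p\in\mathcal{X}$. There are only finitely many $G$-orbits of vertices, so after passing to a subsequence all $v_i$ lie in one orbit. Write $v_i=g_iv_1$ with $g_i\in G\leq\text{Isom}(\mathcal{X})$. Since $\Gamma$ is 3-connected it has a vertex of degree at least $3$. Hence every vertex lies on some cycle, and in particular on the boundary of a face. Faces of a map are open discs, and the embedding is by the map provided by Theorem \ref{thm:Babai quasi-transitive}.

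Fix a face $F$ incident to $v_1$; it is an open set. Then $g_iF$ is a face incident to $v_i$, isometric to $F$. Choose a small metric ball $B\subset F$ of radius $\varepsilon>0$ whose centre $c$ is at distance $d$ from $v_1$. Then $g_iB$ is a ball of radius $\varepsilon$ inside the face $g_iF$, with centre at distance $d$ from $v_i$. Since $v_i\to p$, all the centres $g_ic$ lie in the compact ball of radius $d+1$ about $p$ for large $i$. Passing to a further subsequence, the centres $g_ic$ converge. Two balls of radius $\varepsilon$ whose centres are closer than $2\varepsilon$ intersect, so for large $i\neq j$ the faces $g_iF$ and $g_jF$ intersect. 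Faces are disjoint components of $\mathcal{X}-M$, so $g_iF=g_jF$.

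Thus infinitely many distinct vertices $v_i$ lie on the boundary of a single face $F'$. Local finiteness and one-endedness of $\Gamma$ do not rule out an infinite face by themselves, so this case needs its own argument. The cleanest way, I expect, is to use isometry directly: apply the same argument with $v_1$ and its finitely many neighbours instead of $F$. The neighbours $u$ of $v_1$ lie at positive distances $\delta_u$ from $v_1$. Then the edge curves $g_i\alpha_{v_1,u}$ are isometric copies emanating from the $v_i$, and they converge (Arzel\`a--Ascoli, after choosing subsequences so that the $g_i$ converge in $\text{Isom}(\mathcal{X})$, which is possible since $g_iv_1$ stays bounded and the isometry group is proper).

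So $g_i\to g$ in $\text{Isom}(\mathcal{X})$, and the limit $g$ sends $v_1$ to $p$. For large $i$, the images $g_iM$ and $g_jM$ are the same map $M$, yet $g_j^{-1}g_i\to\text{id}$ with $g_j^{-1}g_iv_1\neq v_1$ for $i\neq j$. The \textbf{main obstacle} is to show that $G$ is discrete in $\text{Isom}(\mathcal{X})$, which contradicts this. I would argue discreteness as follows. The identity $v_1$ is a vertex of degree at least $3$, and its incident edges are embedded arcs meeting only at $v_1$. An isometry $h$ close to the identity moves $v_1$ to a nearby vertex $hv_1\neq v_1$. The edges at $hv_1$ are $h$-images of those at $v_1$, so they are uniformly close to them. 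Taking an edge $\alpha=\alpha_{v_1,u}$, the arc $h\alpha$ starts arbitrarily near $v_1$ and runs near $\alpha$. A planarity argument around $v_1$, using the three edges and the cyclic order at $v_1$, which is preserved by orientation-preserving maps and reversed by the others, forces $h\alpha$ to cross one of the arcs at $v_1$ at a non-vertex point. That contradicts $M$ being a map. Hence $G$ is discrete, and so $V(M)$, a finite union of orbits of a discrete group acting properly, is locally finite.
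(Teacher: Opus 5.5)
Your argument is correct in substance, but it takes a genuinely different route from the paper.

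The paper works with the specific Babai embedding. It quotes \cite[Lemma 2.2]{Babai97} that no face has infinitely many sides, and uses the finitely many edge lengths to get faces with compact closure. It then bounds, by an area argument, the number of faces (hence vertices) inside a region enclosed by a cycle of $M$. Finally it uses one-endedness to enclose any compact set in such a cycle.

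You instead turn an accumulation point into a failure of discreteness: properness of the action of $\text{Isom}(\mathcal{X})$ gives elements $h=g_j^{-1}g_i\in G$ arbitrarily close to the identity with $hv_1\neq v_1$. You then rule these out with a Moore-triod type crossing argument at a vertex of degree $\geq 3$ (every vertex has this, by 3-connectivity). Your route needs only minimum degree $3$, an isometric action with finitely many vertex orbits, and disjointness of edge arcs. It uses no one-endedness, no areas and no face lemma, so it is more general. It also reverses the paper's logical order, where discreteness (Lemma~\ref{lem:discrete group}) is deduced from local finiteness.

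The cost is some plane topology, and your crossing step is only sketched. To make it precise:
\begin{enumerate}
\item Take a circle $S$ about $v_1$ of radius $r$ smaller than the minimum distance between adjacent vertices. This minimum is positive because there are finitely many edge orbits.
\item Cut each arm $\alpha_k$ at its first hit $\alpha_k(t_k)\in S$. The three initial arcs split the open disc into three regions.
\item For $h$ close to the identity, $hv_1$ is not adjacent to $v_1$ and the $hu_k$ lie outside the disc. Also $hv_1$, being a vertex, is on no edge interior, so it lies in one of the three regions, say the one bounded by arms $1$ and $2$.
\item Then $h\alpha_3([0,t_3/2])$ stays in the disc. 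It ends near $\alpha_3(t_3/2)$, which is at positive distance from the closure of that region.
\item Hence it meets $\alpha_1$ or $\alpha_2$ at a point that is not a common endpoint of the two edges, contradicting that $M$ is a map.
\end{enumerate}
So the arm to use depends on where $hv_1$ lands, and orientation and cyclic order play no role. Your closing claim that all of $G$ is discrete would also need a word about small rotations fixing $v_1$. That is not needed, though: the contradiction for $h=g_j^{-1}g_i$ already proves the lemma.

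Finally, your abandoned face detour would have closed in one line using the same input the paper starts from. By \cite[Lemma 2.2]{Babai97}, a single face cannot carry infinitely many of the $v_i$.
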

\begin{proof}
By the construction of Babai's embedding it follows that $M$ contains no faces with infinitely many sides \cite[Lemma 2.2]{Babai97}. Moreover since $M$ is quasi-transitive and $\text{Aut}(M)$ acts by isometries of $\mathcal{X}$, the set of distances between adjacent vertices $\mathcal{E}:=\{\rho_\mathcal{X}(u,v):\{u,v\}\in E(M)\}$ is finite, and the set of lengths of the edge curves $\alpha_{u,v}$ is also finite. Hence we may conclude all faces of $M$ have compact closure. 
 For a simple closed loop $C:[0,1]\rightarrow \mathcal{X}$, let $\overline{C}\subset\mathcal{X}$ be the induced compact set such that $\partial\overline{C}=C$.
Observe that in $\mathcal{X}$, any compact set $K$ is contained in a union of at most finitely many compact sets of the form $\overline{C}$.
Hence it is sufficient to show that for any simple closed loop $C:[0,1]\rightarrow \mathcal{X}$, the induced compact set $\overline{C}$ contains at most finitely many vertices of $M$. \\
First suppose that $C$ coincides with a cycle in $M$. Then as $M$ has 1-end the induced subgraph on $V(M)-V(C)$ consists of two connected components $Q_1,Q_2$ such that $V(Q_1)$ is finite and $V(Q_2)$ is infinite. Since $\overline{C}$ is compact, it has finite area, so that $\overline{C}$ can contain only a finite natural number of faces of $M$. As the boundary of each face consists of finitely many vertices, we have that
$$V(M)\cap\overline{C}=V(C)\cup V(Q_1)<\infty.$$
Let $C$ now be an arbitrary loop in $\mathcal{X}$, and suppose that we have at least one vertex $x\in V(M)\cap \overline{C}$. Since $M$ has 1-end, and $\mathcal{E}$ is finite, it follows that for any given vertex $x$, and some $r>0$, we can find a cycle $C'$ in $M$ such that $\overline{C'}$ contains the open ball $ B(x,r)$. Then we may choose $r$ sufficiently large so that $\overline{C}\subset B(x,r)\subset\overline{C'}$, and the result follows.\\
\end{proof}

For the purposes of a cleaner argument when dealing with graphs that are not 3-connected, we proceed with only the assumption that we have group of isometries of $\mathcal{X}$ acting quasi-transitively on $M$.

\begin{lemma}\label{lem:discrete group}
    Let $M$ be a locally-finite map with 1-end in $\mathcal{X}\;(=\EE^2,\HH^2)$, such that $V(M)$ is a locally finite collection of vertices. Suppose that $G\leq \text{Isom}(\mathcal{X})$ acts quasi-transitively on $M$. Then $G$ is a discrete group of isometries of $\mathcal{X}$.
\end{lemma}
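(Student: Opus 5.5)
We show that $G$ acts properly discontinuously on $\mathcal{X}$. Equivalently, for every compact $K\subset\mathcal{X}$ only finitely many $g\in G$ satisfy $gK\cap K\neq\emptyset$. In $\EE^2$ and $\HH^2$ this is the same as discreteness of $G$ in the compact-open topology. The idea is to control elements of $G$ by their action on a few vertices, then use the local finiteness of $V(M)$ together with the rigidity of isometries.

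The first step is to fix three vertices $x_1,x_2,x_3\in V(M)$ that do not lie on a common geodesic. These exist because $V(M)$ is infinite and not contained in a single geodesic line: a line would force $M$ to be a path-like graph with two ends, or with a face that fails to be a disc. An isometry of $\mathcal{X}$ is uniquely determined by its values on three non-collinear points.

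Next, let $K$ be compact and suppose $g\in G$ with $gK\cap K\neq\emptyset$. Let $D$ be the maximum of the distances from $x_i$ to points of $K$. Then $\rho_\mathcal{X}(gx_i,x_i)\le 2\operatorname{diam}(K)+2D$, so each $gx_i$ lies in a fixed closed ball $B_i$. Since $G$ preserves $V(M)$, we have $gx_i\in V(M)\cap B_i$. By hypothesis $V(M)$ is a locally finite collection of vertices, so $V(M)\cap B_i$ is finite. Hence the triple $(gx_1,gx_2,gx_3)$ takes only finitely many values. By rigidity, only finitely many $g$ satisfy $gK\cap K\neq\emptyset$. This is proper discontinuity, so $G$ is discrete.

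The main obstacle is the non-collinearity step: the argument needs three vertices not on a common geodesic, and the stated hypotheses only indirectly exclude $V(M)$ lying on one line. I would handle it with the 1-end hypothesis. If $V(M)$ lay on a geodesic $\ell$, every edge curve would join points of $\ell$. Removing a finite segment of vertices would then separate $M$ into two infinite pieces, one on each side along $\ell$, because local finiteness bounds how far edges can jump. This contradicts $M$ having one end. If that fails, one could instead use two vertices $x_1,x_2$: then $g$ is determined up to the reflection in the line through them, so there are at most twice as many possibilities, and the argument goes through unchanged. This fallback in fact sidesteps the obstacle entirely: fixing two distinct points $x_1\neq x_2$ leaves at most two isometries, so rigidity never needs the third point. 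I would therefore present the two-vertex version.
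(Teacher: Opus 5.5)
Your final two-vertex argument is correct, and it is organised differently from the paper's. The paper argues by contradiction. If $G$ were not discrete there would be $\gamma_n\in G\setminus\{\mathrm{Id}\}$ with $\gamma_n\to\mathrm{Id}$, so $\gamma_n v\to v$ for a vertex $v$, which is claimed to contradict local finiteness of $V(M)$.

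You instead prove proper discontinuity directly. For compact $K$, any $g$ with $gK\cap K\neq\emptyset$ sends $x_1,x_2$ into fixed closed balls, and these contain only finitely many vertices. The pointwise stabiliser of two distinct points in $\text{Isom}(\mathcal{X})$ is $\{\mathrm{Id},\sigma\}$, with $\sigma$ the reflection in the geodesic through them. So each admissible pair $(gx_1,gx_2)$ corresponds to at most two isometries, and only finitely many $g$ occur.

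What your route buys is exactly this rigidity step. The paper's one-vertex argument silently needs $\gamma_n v\neq v$ for infinitely many $n$, and that is not automatic: the $\gamma_n$ could all stabilise $v$, for instance small rotations about $v$. Local finiteness only gives $\gamma_n v=v$ for all large $n$. To finish, one needs your observation applied to a second vertex $w$: it forces $\gamma_n\in\{\mathrm{Id},\sigma\}$ eventually, which is incompatible with $\gamma_n\neq\mathrm{Id}$ and $\gamma_n\to\mathrm{Id}$. Your version also yields proper discontinuity outright, which is the form in which discreteness is actually used later (fundamental domains, quotient surfaces). The paper's version is shorter but leaves that point unaddressed.

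In your write-up, drop the three-vertex/collinearity discussion entirely. It is unnecessary, and the justification you sketch for it (that local finiteness "bounds how far edges can jump") does not follow from local finiteness of the graph alone. Also, the estimate $\rho_{\mathcal{X}}(gx_i,x_i)\le \rho_{\mathcal{X}}(x_i,k)+\rho_{\mathcal{X}}(gk,x_i)\le 2D$, with $k, gk\in K$, already suffices; no $\mathrm{diam}(K)$ term is needed.
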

\begin{proof}
    Suppose that $G$ is not discrete. Then there exists a sequence $\{\gamma_n\}_{n=1}^\infty\subset G-\{\text{Id}\}$ such that for all $x\in\mathcal{X}$ we have that $\rho_\mathcal{X}(\gamma_n(x),x)\rightarrow 0$ as $n\rightarrow \infty$ (where $\rho_\mathcal{X}(\cdot,\cdot)$ denotes the relevant Euclidean of hyperbolic metric on $\mathcal{X}$). However as $M$ is $G$-invariant, for a vertex $v\in V(M)$ this implies that there exists a sequence of vertices $v,\gamma_1v,\gamma_2v,\ldots\subset V(M)$ with $\rho_\mathcal{X}(v,\gamma_iv)\rightarrow 0$, contradicting that $V(M)$ is a locally finite collection of vertices.\\    
\end{proof}

\section{Graphs that embed in the Euclidean plane}\label{sec:Euclid}
We now consider only the case where $\mathcal{X}=\EE^2$. Discrete groups of isometries of the Euclidean plane are well studied and classified. A discrete group of isometries such that there exist two linearly independent translations is called a \textit{wallpaper group}, of which it is known there are 17 up to isomorphism. A discrete group of isometries with a single translation up to linear dependence is known as a \textit{Frieze group}, of which there are 7 up to isomorphism (see \cite{conway2008symmetries} for a full exposition on these groups).

For a discrete group of isometries $\mathcal{G}$ acting on a metric space $X$, a \textit{fundamental domain} $F\subset X$ is an open connected set so that;
\begin{enumerate}[label=(\roman*)]
    \item $\bigcup_{g\in \mathcal{G}}g \Bar{F}=X$, and
    \item for $\mathcal{G}\ni g\neq 1$, ${F}\cap gF=\emptyset$.
\end{enumerate} 
The set $\partial F = \Bar{F}-F$ is called the boundary of $F$.
In addition, it is well known that for any point $x\in X$, the intersection $\partial F\cap Gx$ is empty or finite.

A Euclidean isometry $t\in \text{Isom}(\EE^2)$ that acts without fixed points is called a \textit{translation}, and is completely determined by its direction and translation length. A group of isometries $T\leq \text{Isom}(\EE^2)$ that, with the exception of the identity, consists entirely of translations is called a \textit{translation group}.
It is a fact of linear algebra that any translation group acting on the Euclidean plane can have at most two linearly independent generating translations. Let $T=\langle t_1,t_2\rangle$ be a two-dimensional translation group generated by linearly independent translations $t_1$ and $t_2$. Fix some $x\in \EE^2$, and define line segments $s_i:[0,1]\rightarrow \EE^2$, such that $s_i(0)=x$ and $s_i(1)=t_i(x)$ for $i\in\{1,2\}$. Then each $s_i$ is parallel to the action of the respective $t_i$, and has length equal to the translation length of $t_i$.
It then follows that the interior of any parallelogram defined by line segments $s_1,s_2$ is a fundamental domain for $T$.

\begin{lemma}\label{lem:wallpaper}
  Let $M$ be a locally-finite map with 1-end in $\EE^2$ such that $V(M)$ is a locally finite collection of vertices, and suppose $G\leq \text{Isom}(\EE^2)$ acts quasi-transitively on $M$. Then $G$ is a wallpaper group.
\end{lemma}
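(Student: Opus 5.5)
By Lemma \ref{lem:discrete group}, $G$ is a discrete group of Euclidean isometries. The classification of discrete subgroups of $\text{Isom}(\EE^2)$ then leaves three possibilities: $G$ is finite (a cyclic or dihedral point group), $G$ is a Frieze group (its translation subgroup is infinite cyclic), or $G$ is a wallpaper group. The plan is to rule out the first two cases using the hypotheses that $M$ is infinite, locally finite, 1-ended and that $V(M)$ is a locally finite collection of vertices.

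\textbf{Finite case.} Since $\Gamma$ is infinite and $G$ acts with finitely many orbits, some orbit $Gv$ is infinite. But $|Gv|\le |G|$, so $G$ must be infinite.

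\textbf{Frieze case.} Assume $G$ is a Frieze group. Its translation subgroup $\langle t\rangle$ has finite index, and every element of $G$ preserves the direction of $t$. Hence $G$ preserves a line $\ell$, or at least the family of lines parallel to $t$, and it moves points a bounded distance transverse to that direction. More precisely, for any point $x$, the orbit $Gx$ lies in a strip $S$ of bounded width around a line parallel to $t$, with width depending only on $x$ and $G$. Choose representatives $v_1,\dots,v_k$ of the finitely many vertex orbits. Then all of $V(M)$ lies in a single strip $S$ of bounded width $w$.

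Since the edge orbits are also finite in number, the edge curves have bounded length, so every edge curve lies in a slightly wider strip $S'$. Consequently the whole map $M$ lies in $S'$, and so $\EE^2-S'$ lies inside faces of $M$. The region $\EE^2-S'$ has two unbounded components, which therefore lie in faces of $M$. The faces of a map are open discs, but in this setting the faces should also have compact closure. This is the point that needs care.

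\textbf{Main obstacle: bounded faces.} To get a contradiction I would argue from the 1-ended hypothesis directly. Take a finite vertex set $A=V(M)\cap K$, where $K$ is a compact transverse slab $\{\,|\text{proj}_t(x)|\le R\,\}\cap S'$. This set is finite because $V(M)$ is a locally finite collection, so only finitely many vertices lie in the compact set $K$. Choose $R$ larger than the maximal edge length. Then no edge can jump from the part of $M$ on one side of the slab to the part on the other side without meeting $A$, since every edge is a curve of bounded length lying in $S'$ and must cross the slab $K$. Both sides contain infinitely many vertices, because the orbit $\langle t\rangle v$ escapes in both directions. So $M-A$ has at least two infinite components, contradicting that $M$ has 1 end.

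The delicate point is that a curve crossing the slab must pass through a vertex of $A$. Edges can cross the slab, but an edge with both endpoints outside $K$ has length less than $R$, while crossing the slab from one side to the other requires length greater than $R$. Hence any path from one side to the other must use a vertex in $K$.

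With the finite and Frieze cases excluded, $G$ contains two linearly independent translations and is therefore a wallpaper group.
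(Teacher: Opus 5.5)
Your argument is correct and is essentially the paper's. The translation-free case is excluded because every orbit would then be finite, and the Frieze case is excluded by a band transverse to $t$, wider than the longest edge, that contains only finitely many vertices and so is a finite cut separating two infinite parts of $M$. The one difference is how you get finiteness of the vertices in the band: you confine $V(M)$ to a strip parallel to $t$ (using that $\langle t\rangle$ has finite index) and apply the locally-finite-collection hypothesis to a compact box, whereas the paper takes the band to be the closure of a fundamental domain of the finite-index subgroup $\langle nt\rangle$ and gets finiteness directly from quasi-transitivity.
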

\begin{proof}
    It suffices to show that there exist linearly independent translations $t_1,t_2\in G$. We first suppose that $G$ contains no translations. Then all orbits $Gx$, $x\in\EE^2$, are finite. Since $G$ acts quasi-transitively this implies that $V(M)$ is finite, contradicting that $M$ has 1-end.\\
    Suppose that there is a single translation $t\in G$ up to linear dependence. Without loss of generality suppose that $t$ attains the minimum translation distance. Then $G$ is a Frieze group and has a finite index infinite-cyclic translation subgroup $T=\langle t\rangle$. Then $T$ has a fundamental domain $F$ that is the interior of two parallel lines $\ell_1,\ell_2\subset \EE^2$. Let $d=\text{dist}(\ell_1,\ell_2)$, this corresponds with the translation distance of $t$. 
    The set of distances between adjacent vertices $\mathcal{E}:=\{|u-v|:\{u,v\}\in E(\Gamma)\}$ must be finite so there exists a maximum $\max\{\mathcal{E}\}$.
    Choose $n>0$ sufficiently large that the translation distance of $nt$ is at least $\max\{\mathcal{E}\}$. Then $T_n=\langle nt\rangle$ is index $n$ in $T$ and has a fundamental domain $F_n$  so that $\Bar{F}_n$  consists of the connected union of $n$ copies of $\bar{F}$. It follows that $T_n$ is finite index in $G$. Since $G$ acts quasi-transitively the set $\bar{F}\cap V(M)$ is finite, and hence $\bar{F}_n\cap V(M)$ is finite. Then removing $\bar{F}_n$ from the plane leaves the disjoint union
     $$\EE^2-F_n=P_1\sqcup P_2,$$
    where $P_1$ and $P_1$ are infinite connected components of the plane. Moreover by construction we have that if $u\in V(M)\cap P_1$ and $v\in V(M)\cap P_2$ then they cannot be connected by an edge of $M$.
    Since the orbit of $F_n$ under $T_n$ is bi-infinite, it follows that the induced subgraphs on $P_1\cap V(M)$ and $P_2\cap V(M)$ are infinite and connected. Then $F_n\cap V(M)$ serves as a finite cut set leaving two infinite connected components, contradicting that $M$ has 1-end.
    \end{proof}

\begin{theorem}\label{thm:main_Euclid}
    Let $M$ be a locally-finite map in $\EE^2$ with 1-end, and such that $V(M)$ is a locally finite collection of vertices. Suppose $G\leq \text{Isom}(\EE^2)$ acts quasi-transitively on $M$. Then there exists a vertex colouring $\mathcal{C}:V(M)\rightarrow \{1,\ldots,5\}$ such that $\mathcal{C}$ is $T$-invariant, for some finite index subgroup $T\leq G$.
\end{theorem}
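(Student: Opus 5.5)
By Lemma \ref{lem:wallpaper}, $G$ is a wallpaper group. It therefore contains a translation subgroup $T_0=\langle t_1,t_2\rangle$ generated by two linearly independent translations, and $T_0$ has finite index in $G$. The plan is to pass to a sublattice $T_N=\langle Nt_1,Nt_2\rangle$ for a large integer $N$, form the torus quotient $\EE^2/T_N$, and show that $M/T_N$ is a map on this torus in which every non-contractible cycle is long. Theorem \ref{thm:Thomassen} with $g=1$ then yields a proper 5-colouring of $M/T_N$, which we lift to $M$. The lifted colouring is $T_N$-invariant, and $T_N$ has index $N^2[G:T_0]<\infty$ in $G$.

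\textbf{Step 1: the quotient is a genuine map.} Choose $N$ so large that the translation length of every nonidentity element of $T_N$ exceeds $2\max\mathcal{E}$. Here $\mathcal{E}$ is finite by quasi-transitivity, and the edge curves come in finitely many isometry classes, so their lengths are bounded as well. Since $T_N$ acts freely and properly discontinuously, $\EE^2\to\EE^2/T_N$ is a covering map. By Lemma \ref{lem:locally finite collection}'s hypothesis and quasi-transitivity, the closed parallelogram $\bar F_N$ contains only finitely many vertices, so the quotient graph $M/T_N$ is finite. Vertices and edges of $M$ project injectively modulo $T_N$, and no edge has both endpoints in one $T_N$-orbit: a translation of length greater than $\max\mathcal{E}$ cannot move a vertex onto one of its neighbours. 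Hence the quotient graph is simple and loopless. Faces of $M$ are open discs of bounded diameter, since each face is bounded by finitely many edges of bounded length, so for $N$ large each face maps homeomorphically onto a face in the quotient. Therefore $M/T_N$ is a map on the torus, and a proper colouring of its graph lifts to a proper colouring of $M$.

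\textbf{Step 2: long non-contractible cycles.} This is the main obstacle. Let $C$ be a non-contractible cycle in $M/T_N$. Lifting $C$ starting at a vertex $v$ gives a path in $M$ from $v$ to $\tau v$ for some $\tau\in T_N\setminus\{1\}$. Every element of $T_N$ other than the identity has translation length at least $N\cdot\min(|t_1|,|t_2|,\dots)$; more precisely, it is at least $N\lambda$, where $\lambda$ is the minimal length of a nonzero vector in the lattice of $T_0$. The lifted path thus has Euclidean length at least $N\lambda$, while each edge has length at most $\max\mathcal{E}$. So $C$ contains at least $N\lambda/\max\mathcal{E}$ edges, and these edges are distinct because $C$ is a cycle. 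Taking $N$ with $N\lambda/\max\mathcal{E}\ge 2^{20}$ meets the hypothesis of Theorem \ref{thm:Thomassen} for $g=1$. One point needs care: "non-contractible" on the torus has to be matched with a nontrivial deck translation. This holds because the covering $\EE^2\to\EE^2/T_N$ is the universal cover, so a closed curve lifts to a closed curve exactly when it is contractible.

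\textbf{Step 3: lift.} Theorem \ref{thm:Thomassen} gives a proper 5-colouring $c$ of $M/T_N$. Define $\mathcal{C}=c\circ\pi$, where $\pi$ is the quotient map. Then $\mathcal{C}$ is proper by Step 1 and $T_N$-invariant by construction, with $T_N$ of finite index in $G$. The delicate points are confirming that faces remain discs after taking the quotient and that the embedded quotient has no loops or multi-edges. Both are handled by choosing $N$ large relative to the bounded geometry supplied by quasi-transitivity.
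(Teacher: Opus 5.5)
Your proposal is correct and is essentially the paper's proof: pass from the wallpaper group to a large-index sublattice of its translations, apply Theorem \ref{thm:Thomassen} with $g=1$ (threshold $2^{20}$) to the finite quotient on the torus, and lift the 5-colouring. Your lifting argument (a non-contractible cycle lifts to a path from $v$ to $\tau v$ with $|v-\tau v|\ge N\lambda$, so it has at least $N\lambda/\max\mathcal{E}$ edges) makes precise the paper's bare assertion that paths crossing opposite sides of the fundamental parallelogram are long; your claim that every face has finitely many sides is not justified in this generality, but it is also not needed, since Theorem \ref{thm:Thomassen} only asks for an embedded graph on the torus.
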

\begin{proof}
  By the above we have that $G$ is a wallpaper group. Let $T'\subseteq G$ be the set of all translations of $G$. Then it follows that $T'$ forms a translation subgroup of $G$. For any point $x\in\EE^2$, the orbit $T'x$ forms a lattice in the plane, which will take one of five forms known as the 2-dimensional Brevais lattices \cite{Schattschneider}. It follows that we may construct a fundamental domain $F'$ for $T'$ by drawing a minimal area parallelogram $\overline{F'}=\square ABCD$ whose vertices $A,B,C,D\in T'x$ are points in the lattice, and taking $F'$ to be its interior. Furthermore, if $t_1,t_2\in T'$ are the translations along vectors defined by adjacent sides of $F'$ (say $\overrightarrow{AB},\overrightarrow{AC}$ respectively), then $\langle t_1,t_2\rangle = T'$. Since $\overline{F'}$ is compact it contains at most finitely many vertices of $\Gamma$.
  Then the quotient surface $\EE^2/T'$ is a torus, and we have an induced a finite map $M/T'$ on $\EE^2/T'$ as the image of $M$ under the quotient function $\pi_{T'}:\EE^2\rightarrow \EE^2/T'$.
  In order to satisfy the hypotheses of Theorem \ref{thm:Thomassen} we require that the minimum number of edges in a path $\gamma$ that intersects opposite sides of a fundamental parallelogram is at least $2^{20}$. Choose integers $A,B$ sufficiently large that all fundamental parallelograms $P$ for $T=\langle At_1,Bt_2\rangle$, this holds. Then $T$ is an finite index subgroup of $T'$, such that under the quotient map $\pi_T$, $M/T$ is a finite map on the torus $\EE^2/T$ satisfying the assumptions of Thomassen's theorem. Hence, there exists a proper 5-colouring $\mathcal{C}:V(M/T)\rightarrow\{1,\ldots,5\}$. It follows that $\mathcal{C}$ lifts to 5-colouring $\widetilde{\mathcal{C}}:V(M)\rightarrow \{1,\ldots,5\}$ that is $T$-invariant.
\end{proof}


\section{Graphs that embed in the hyperbolic plane}\label{sec:Hyp}
 We proceed in the Poincaré disc model $\mathbb{D}$ for the hyperbolic plane, where $\mathbb{D}:=\{z\in\CC:|z|< 1\}$. Let $\mathbb{D}$ denote the interior of the disc, $\partial \mathbb{D}$ denote the principal circle `at infinity' and $\overline{\mathbb{D}}=\mathbb{D}\cup\partial\mathbb{D}$ its closure with the \textit{boundary at infinity}.
 We shall write $\rho_\mathbb{D}(\cdot,\cdot)$ to denote the appropriate hyperbolic metric on $\mathbb{D}$. A degree of familiarity with elementary hyperbolic geometry is assumed here, for an introductory approach see \cite{anderson2005hyperbolic}. 

For a discrete group of isometries $G\leq \text{Isom}(\HH^2)$, and a point $\alpha\in\mathbb{D}$ that is not a fixed point for any element of $G$, a \textit{Dirichlet domain} $D(\alpha)$ is a fundamental domain such that $D(\alpha)=\bigcap_{\gamma\in G}\{z\in\mathbb{D}:\rho_\mathbb{D}(z,\alpha)< \rho_\mathbb{D}(z,\gamma(\alpha))\}$. A Dirichlet domain is a convex hyperbolic polygon whose boundary consists of geodesic arcs, these are referred to as the \textit{sides} of $D(\alpha)$, and the point where two incident sides meet as a \textit{vertex} of $D(\alpha)$. 
In the case that a Dirichlet domain is an unbounded set, we may consider its closure `at infinity' $\overline{D(\alpha)}$  in $\overline{\mathbb{D}}$. 
Each point where a side of $\overline{D(\alpha)}$ intersects $\partial\mathbb{D}$ is called an \textit{ideal} vertex of $D(\alpha)$, and each section of the boundary $\partial\mathbb{D}$ contained in $\overline{D(\alpha)}$ an \textit{ideal} side.
For a point $x\in \mathbb{D}$ and $r>0$ let $C(x,r)$ denote the hyperbolic circle with centre $x$ and hyperbolic radius $r$. For a connected subset $A\subseteq{\mathbb{D}}$ let $\mu(A)$ denote its hyperbolic area.

An orientation-preserving discrete group of isometries $G^+\leq \text{Isom}^+(\HH^2)$ is called a Fuchsian group.
Every discrete group of hyperbolic isometries $G\leq \text{Isom}(\HH^2)$ has an \textit{orientation-preserving} subgroup $ G^+\leq G $ of index at most 2 (index 1 if $G$ itself is orientation-preserving). It follows immediately that if a group $G$ acts quasi-transitively on a map $M$, then $G^+$ also acts quasi-transitively on $M$ (with at most twice as many orbits). Moreover, if $F$ is a fundamental domain for $G$ and $F^+$ a fundamental domain for $G^+$, so that $[G:G^+]=k\in\{1,2\}$, we have that $\mu(F^+)=k\cdot\mu(F)$ (\cite{Ratcliffe_manifolds}, 6.7.3).
Fuchsian groups are well studied, as such it will often be useful to consider the orientation-preserving subgroup of a given group of isometries. 

\begin{lemma}\label{lem:hyp_fin_area}
    Let $M$ be a locally-finite map in the hyperbolic plane with 1-end such that $V(M)$ is a locally finite collection of vertices. Suppose that  $G\leq\text{Isom}(\HH^2)$ acts quasi-transitively on $M$. Then $\mu(F)<\infty$ for all fundamental domains $F\subset\overline{\mathbb{D}}$ for $G$.
\end{lemma}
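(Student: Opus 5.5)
We argue by contradiction, using the structure of Dirichlet domains. By Lemma~\ref{lem:discrete group}, $G$ is discrete. The area of a fundamental domain of a discrete group is independent of the choice of (reasonable) fundamental domain (\cite{Ratcliffe_manifolds}, 6.7.3). So it suffices to show that one Dirichlet domain $D(\alpha)$ has finite area; we may also pass to $G^+$, since $\mu(F^+)=k\mu(F)$ with $k\le 2$.

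Recall the classical fact for Fuchsian groups: a Dirichlet domain has infinite area exactly when it has an \emph{ideal side}, i.e.\ a nondegenerate arc of $\partial\mathbb{D}$ lying in $\overline{D(\alpha)}$. Equivalently, the limit set is not all of $\partial\mathbb{D}$. Otherwise $D(\alpha)$ is a finite-sided polygon whose only ideal points are cusp vertices, and such a polygon has finite area. So suppose for contradiction that $\overline{D(\alpha)}$ contains an open arc $I\subset\partial\mathbb{D}$.

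The next step is to show that only finitely many vertices of $M$ lie in $\overline{D(\alpha)}$, at least away from cusps.

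\begin{itemize}
\item By quasi-transitivity, pick orbit representatives $v_1,\dots,v_n$. Each orbit $Gv_i$ meets $\overline{D(\alpha)}$ in a nonempty set. Using the remark that $\partial F\cap Gx$ is finite, we may choose representatives inside $\overline{D(\alpha)}$ so that $V(M)\cap\overline{D(\alpha)}$ is finite up to the usual boundary identifications.
\item The quantity we actually need is $R=\max_i \rho_{\mathbb{D}}(\alpha,v_i)$, taking the representative of each orbit closest to $\alpha$. Every vertex $v$ then has some $g\in G$ with $\rho_{\mathbb{D}}(g\alpha,v)\le R$.
\item Since edge lengths take finitely many values (quasi-transitivity), every point of every edge lies within some bounded distance $R'$ of the orbit $G\alpha$.
\end{itemize}

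Now use the arc $I$. The region of $D(\alpha)$ near $I$ contains points $z$ with $\rho_{\mathbb{D}}(z,G\alpha)=\rho_{\mathbb{D}}(z,\alpha)$ arbitrarily large. Concretely, take a geodesic ray in $D(\alpha)$ from $\alpha$ to an interior point of $I$; the whole half-plane cut off near $I$ lies in $D(\alpha)$. Therefore $M$ avoids a hyperbolic half-plane $H$, up to a region within distance $R'$ of $\alpha$. In particular, some half-plane $H'\subset D(\alpha)$ contains no point of $M$.

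This contradicts the hypotheses. Since $M$ is a map, every face is an open disc. The face containing $H'$ is bounded by edges of $M$, and $H'$ has infinite area and reaches $\partial\mathbb{D}$. An open disc face containing a whole half-plane has a boundary that is an infinite, unbounded walk. Each vertex on it has its neighbourhood within distance $R'$ of $G\alpha$, while the face stays away from the orbit.

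To finish, use the 1-end hypothesis as in the proof of Lemma~\ref{lem:locally finite collection}. Around any vertex $x$ and any radius $r$ there is a cycle $C'$ of $M$ with $B(x,r)\subset\overline{C'}$. Choose $x$ close to $H'$ and $r$ large enough that $B(x,r)$ meets $H'$ in a ball of radius exceeding $R'$. Then $\overline{C'}$ is compact and contains only finitely many faces, each with compact closure, because faces inside a cycle are bounded by finitely many edges. Hence every point of $B(x,r)$ lies within $R'$ of an edge or vertex of $M$. This contradicts $H'\cap M=\emptyset$.

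The main obstacle is making this last step rigorous. We must show that every point of $\overline{C'}$ is uniformly close to $M$, i.e.\ that faces enclosed by cycles have bounded diameter. I would prove this using finitely many face-orbits, derived from quasi-transitivity and local finiteness: each face is bounded by finitely many edges, and faces inside cycles come in finitely many isometry types. That gives a uniform diameter bound, which yields the contradiction.
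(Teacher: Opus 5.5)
Your reduction to an empty half-plane is sound, and tidier than the paper's. Every point of $M$ lies within $R'$ of $G\alpha$, and on $\overline{D(\alpha)}$ the distance to $G\alpha$ equals the distance to $\alpha$. So a half-plane $H'\subset D(\alpha)$ far from $\alpha$ misses all of $M$, edges included.

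\textbf{The gap is the final step.} You claim that for every vertex $x$ and radius $r$ some cycle $C'$ of $M$ has $B(x,r)\subset\overline{C'}$. This does not follow from having one end. The half-grid $\mathbb{Z}\times\mathbb{N}$, drawn in a closed half-plane, is a locally finite one-ended map whose complementary open half-plane is a face, and no cycle encloses a ball meeting that face. In Lemma~\ref{lem:locally finite collection} the claim is made only for Babai's embedding, whose faces are all finite.

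Under your contradiction hypothesis the claim is in fact false outright. $H'$ is connected and disjoint from $M$, so it lies in a single face $f$, which is unbounded. By the Jordan curve theorem $f$ lies in the exterior of every cycle, so $\overline{C'}\cap H'=\emptyset$ for every cycle $C'$. Your argument therefore assumes, in the guise of one-endedness, that $M$ has no unbounded face, which is essentially what has to be proved.

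You have also placed the main obstacle in the wrong spot. The uniform diameter bound for bounded faces is easy, since there are finitely many face orbits. The existence of enclosing cycles is where quasi-transitivity and the one end must genuinely interact.

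\textbf{The missing idea is the paper's two-sided pinching.} A single $M$-free half-plane cannot contradict one-endedness, as the half-grid shows. The paper therefore produces a second unbounded vertex-free region: a second maximal half-space of $F$, a $G$-invariant horocyclic region when $G$ is cyclic parabolic, or the side-pairing translate $gH_F$. Outside a large ball $B$, the vertices of $M$ then split into two unbounded regions $P_1,P_2$ at distance greater than every edge length, so $V(M)\cap B$ is a finite cutset leaving two infinite parts.

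You could adapt this directly. For any $g\neq 1$, $gH'$ is $M$-free and disjoint from $H'$, because $gD(\alpha)\cap D(\alpha)=\emptyset$. You would still have to show two things:
\begin{itemize}
\item both sides of the strip between $H'$ and $gH'$ carry infinitely many vertices, which needs a suitable choice of $g$ and a separate treatment of elementary $G$;
\item only finitely many edges cross the neck of the strip.
\end{itemize}

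Separately, your \emph{classical fact} that infinite area forces an ideal side holds only for geometrically finite (e.g.\ finitely generated) Fuchsian groups. The paper makes the same move via \emph{second kind}. It can be justified here: $G$ acts on the connected locally finite graph $M$ with finitely many vertex and edge orbits and finite vertex stabilisers, and hence is finitely generated.
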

\begin{proof}
    We assume that $G$ is Fuchsian, as if it is not then $G$ has an index-2 subgroup $G^+$ that is Fuchsian, and for all fundamental domains $F$ and $F^+$ for $G$ and $G^+$ respectively, we have that $\mu(F)=\mu(F^+)/2$.
  It is known that a Fuchsian group is finitely generated if it is cocompact. Furthermore, a Fuchsian group is cocompact if and only if any fundamental domain has compact closure (\cite{Fuchsian_Katok}, 4.2.3). 
  Suppose towards a contradiction that for all fundamental domains $F$ for $G$ we have that $\mu(F)=\infty$. Then it follows that $G$ is a Fuchsian group \textit{of the second kind}, and hence all fundamental domains $F$ contain a closed half space (\cite{Ratcliffe_manifolds}, 12.1.12).\\
  Fix a fundamental domain $F$. For a given group of isometries it is known that the area of all fundamental domains are equal. Hence, without loss of generality we suppose that $F$ is a Dirichlet region $D(\alpha)$ for some ordinary point $\alpha\in \mathbb{D}$. Let $H_F$ be a maximal open half space contained in $F$, in the sense that there are no further open half spaces in $F$ that contain $H_F$. Then let $\ell_F=\partial H_F$ be the boundary geodesic of $H_F$.
  
  In order that $H_F$ be contained within $F$, we must have that the endpoints $e_1,e_2\in\partial \mathbb{D}$ of $\ell_F$ must lie in a connected component of $\overline{F}\cap\partial\mathbb{D}$. Hence  we may conclude that $F$ has at least one ideal side in $\overline{F}\cap\partial \mathbb{D}$. Moreover, since $H_F$ is maximal $e_1$ and $e_2$ must be the ideal vertices of $F$ at the endpoints of this ideal side. 
  
  Let $x\in\overline{F}\cap\partial\mathbb{D}$ be the midpoint on the boundary between $e_1$ and $e_2$ that lies in $\overline{F}$. 
  We define a geodesic ray $\gamma:[0,\infty)\rightarrow \overline{\mathbb{D}}$ so that $\gamma(0)=\alpha,\gamma(\infty)=x$ and $\rho_{\mathbb{D}}(\alpha,\gamma(t))=t$ for all $t>0$. Since $G$ acts on $M$ is quasi-transitively it follows that there exists an $r>0$ such that the perpendicular bisector $\ell_r$ to $\gamma$ at $\gamma(r)$, defines a closed half-space $H_r\subseteq H_F$, such that $H_r\cap V(M)=\emptyset $.
  
Given that $F$ contains the maximal half space $H_F$, there are three possibilities for the structure of $F-H_F$. Either there is at least one other distinct maximal half space $H_F'\subset F$, or $H_F$ is unique and $F$ has at least one bounded side, or $H_F$ is unique and $F$ has no bounded sides. We proceed by showing that in all cases the half space $H_F$ forces the vertex set $V(M)$ to be `pinched' between two unbounded empty regions that contain no vertices. This implies in some sense that the `width' of $M$ is bounded and contradicts the assumption that $M$ has 1-end.
  
  \textit{Case 1:} $F$ contains a second maximal half-space $H_F'$ such that $H_F\cap H_F'=\emptyset$.\\
  Let $\ell_F'$ be the geodesic so that $\ell_F'=\partial H_F'$, and let $x'\in\partial \mathbb{D}$ be the midpoint of the endpoints of $\ell_F'$. Define geodesic arc $\gamma':[0,\infty)\rightarrow\overline{\mathbb{D}}$ such that $\gamma'(0)=\alpha$, $\gamma'(\infty)=x'$ and so that $\rho_\mathbb{D}(\alpha,\gamma'(t))=t$ for all $t>0$. As above since $G$ acts $M$ quasi-transitively there exists $s>0$ such that the perpendicular bisector $\ell_s$ of $\gamma'$ at $\gamma'(s)$, defines a closed half-space $H_s\subseteq H_F'$ such that $H_s\cap V(M)=\emptyset$ (see Figure \ref{fig:finite_area_case_1}). Since $G$ acts on $M$ quasi-transitively by isometries, the set of edge lengths $\mathcal{E}:=\{\rho_\mathbb{D}\{u,v\}:\{u,v\}\in E(M)\}$ is finite. For $R>\max\{r,s\}$ let $\{z_j,z_j'\}=C(\alpha,R)\cap H_j$ for $j=r,s$. Then choose $R>\max\{r,s\}$ sufficiently large so that 
  $$\min\{\rho_\mathbb{D}(z_j,z_j'):j=r,s\}>\max\{\mathcal{E}\}.$$
 Then it follows that $\mathbb{D}-(B(\alpha,R)\cup H_r\cup H_s)$ is a disjoint union of two connected components $P_1$ and $P_2$ such that $M\cap P_i$ is an infinite connected graph for $i=1,2$ and if $u\in P_1\cap V(M)$, $v\in P_2\cap V(M)$ then they cannot be connected by an edge of $M$. Moreover, as the closure of $B(\alpha,R)$ is compact it can contain at most finitely many vertices of $M$, and $H_r$ and $H_s$ contain no vertices of $M$, so that $V(M)\cap(B(\alpha,R)\cup H_r\cup H_s)$ is a finite cutset of $M$ that leaves two infinite connected components. This contradicts that $M$ is 1-ended, hence $F$ must have finite area.

 \textit{Case 2:} $H_F$ is a unique maximal half-space, and the only sides of $F$ are those incident with $e_1$ and $e_2$ on $\partial\mathbb{D}$.\\
  An immediate consequence of $H_F$ being unique is that $F$ has only one ideal side.
Let $S_1,S_2$ be the non-ideal sides of $F$, and a vertex $y\in\overline{\mathbb{D}}$ such that $S_1\cap S_2=\{y\}$. Since $F$ is a Dirichlet region there exists a \textit{side-pairing} element $\tau\in G$ such that $\tau(S_1)=S_2$. Moreover, we then must have that $G=\langle\tau\rangle$ (\cite{Fuchsian_Katok}, 3.5.4). If $y\in\mathbb{D}$ then $\tau$ is an elliptic element. Since $M$ is quasi-transitive it follows that there exists $r>0$ such that $F\cap V(M)=B(y,r)\cap V(M)$ and is finite. Furthermore since $B(y,r)$ is $\tau$-invariant,
 and hence $G$-invariant so that $B(y,r)\cap V(M)=\bigcup_{g\in G}gF\cap V(M)=V(M)$. Since the closed ball
$\overline{B(y,r)}$ is compact this implies that $V(M)$ is finite contradicting that $M$ has 1-end.\\
If $y\in \partial \mathbb{D}$, then $\tau$ is a parabolic element. Let $\Sigma:[0,\infty)\rightarrow\overline{\mathbb{D}}$ be the geodesic path such that $\Sigma(0)=\alpha,\Sigma(\infty)=y$ and $\rho_\mathbb{D}(\alpha,\Sigma(t))=t$ for all $t>0$. Then since $F\cap V(M)$ is finite there exists an $r>0$ such that $B(\alpha,r)\cap F\cap V(M)=F\cap V(M)$. Let $\omega(t)$ denote the horocycle based at $y$ and with intersection $\omega(t)\cap\mathbb{D}\cap \Sigma=\Sigma(t)$. Since $\omega(t)$ is $\tau$-invariant for all $t>0$, hence $G$-invariant, it follows that the closed horocyclic region $\mathcal{H}_r$ defined by $\omega(r)$ is such that $\mathcal{H}_r\cap V(M)=\emptyset$ (see Figure \ref{fig:finite_area_case_2}). For $R>r$ let $\{z,z'\}=C(\alpha,R)\cap H_r$ and $\{\zeta,\zeta'\}=C(\alpha,R)\cap \mathcal{H}_r$. Then choose $R>r$ sufficiently large so that 
$$\min\{\rho_\mathbb{D}(z,z'),\rho_\mathbb{D}(\zeta,\zeta')\}>\max\{\mathcal{E}\}.$$
 Then it follows that $\mathbb{D}-(B(\alpha,R)\cup H_r\cup \mathcal{H}_r)$ is a disjoint union of two connected components $P_1$ and $P_2$ such that $M\cap P_i$ is an infinite connected graph for $i=1,2$ and if $u\in P_1\cap V(M)$, $v\in P_2\cap V(M)$ they cannot be connected by an edge of $M$. Moreover, as the closure of $B(\alpha,R)$ is compact it can contain at most finitely many vertices of $M$, and $H_r$ and $\mathcal{H}_r$ contain no vertices of $M$, so that $V(M)\cap(B(\alpha,R)\cup H_r\cup H_s)$ is a finite cutset of $M$ that leaves two infinite connected components. This contradicts that $M$ is 1-ended, hence $F$ must have finite area.

 \begin{figure}[ht]
     \centering
    \begin{minipage}{0.5\textwidth}
       \centering
    \includegraphics[width=0.7\textwidth]{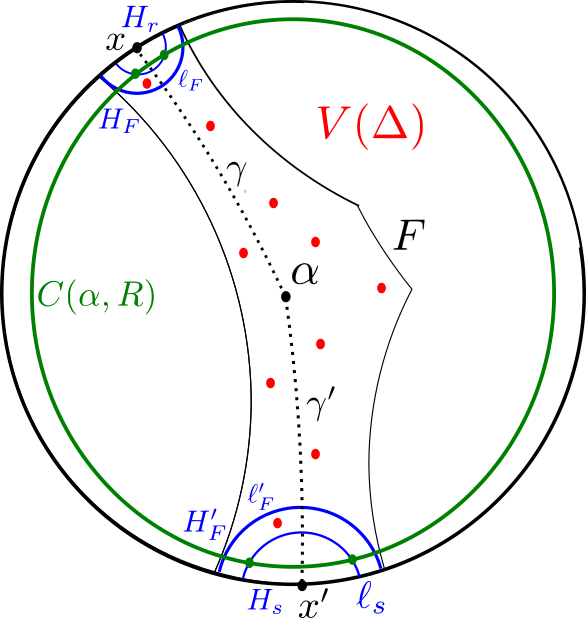}
    \caption{$F$ contains a second disjoint maximal half- \\ space $H_F'$.}
    \label{fig:finite_area_case_1}
    \end{minipage}%
    \begin{minipage}{0.5\textwidth}
    \centering
    \includegraphics[width=0.7\textwidth]{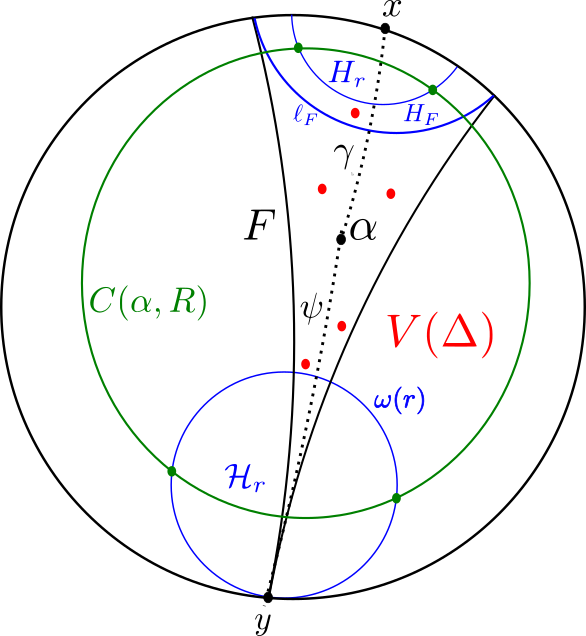}
    \caption{$H_F$ is unique and the only sides of $F$ intersect $\ell_F$.}
    \label{fig:finite_area_case_2}
    \end{minipage}
\end{figure}
 
\textit{Case 3:} $H_F$ is a unique maximal half-space, and there is at least one side $S$ of $F$ such that $S\cap\partial\mathbb{D\cap\ell_F}=\emptyset$, \\
 Then since $F$ is a Dirichlet region, it follows that there is another side $S'$ paired with $S$ so  that there exists $g\in G$ with $gS=S'$ and $F\cap gF=S'$.
  Consider the set $E=F\cup gF$. Then $E$ is a convex connected region, $E\cap V(M)$ is finite and $E$ contains distinct half-spaces $H_F$ and $gH_F$. Choose $\beta\in S'$ such that it lies in the interior of $S'$ (not a vertex of $F$), hence $\beta\in \mathring{E}$, and define geodesic arcs $\Sigma,\Sigma':[0,\infty)\rightarrow \overline{\mathbb{D}}$ so that $\Sigma(0)=\beta=\Sigma'(0)$, $\Sigma(\infty)=x$, $\Sigma'(\infty)=gx$ and $\rho_\mathbb{D}(\beta,\Sigma(t))=\rho_\mathbb{D}(\beta,\Sigma'(t))=t$ for all $t>0$. Let $r,s>0$ be such that the perpendicular bisectors of $\Sigma,\Sigma'$ at $\Sigma(r)$ and $\Sigma'(s)$ respectively define closed half-spaces $H_r$ and $H_s$ in $E$ so that $H_r\cap V(M)=\emptyset$ and $H_s\cap V(M)=\emptyset$ (see Figure \ref{fig:finite_are_case_3}). As before for $R>r,s$ let $\{z_j,z_j'\}=H_j\cap C(\beta,R)$ for $j=r,s$. Then choose $R>\max\{\mathcal{E}\}$ sufficiently large so that 
  $$\min\{\rho_\mathbb{D}(z_j,z_j'):j=r,s\}>\max\{\mathcal{E}\}.$$
  Then $\mathbb{D}-(B(\beta,R)\cup H_r\cup H_s)$ consists of two infinite connected components of $\mathbb{D}$ each containing infinite connected induced subgraphs of $M$. As before we then have that $V(M)\cap(B(\beta,R)\cup H_r\cup H_s)$ serves as a finite cut set, contradicting that $M$ has 1-end. \\

\end{proof}
\begin{figure}
     \centering
    \begin{minipage}{0.5\textwidth}
       \centering
    \includegraphics[width=0.7\linewidth]{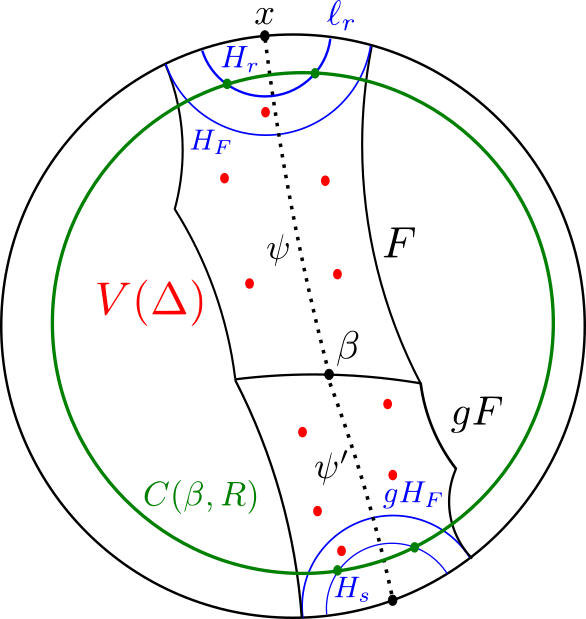}
    \caption{$H_F$ is unique and there is at least one side $S$ not incident with $\ell_F$.}
    \label{fig:finite_are_case_3}
    \end{minipage}%
    \begin{minipage}{0.5\textwidth}
    \centering
      \includegraphics[width=0.7\linewidth]{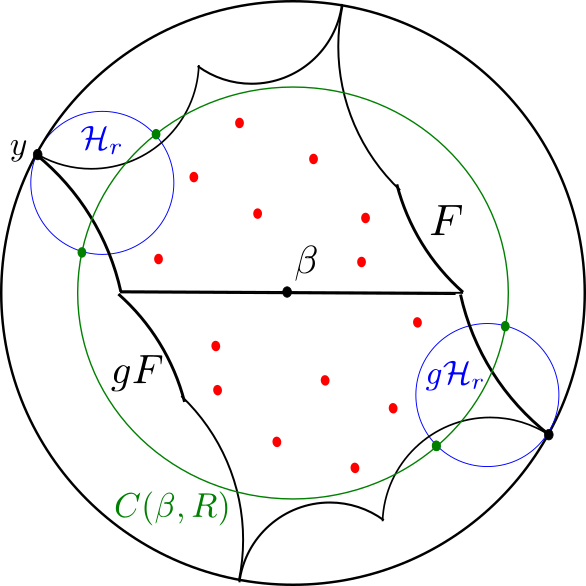}
      \caption{$\text{Aut}(M)$ contains no parabolic elements.}
      \label{fig:no_parabolic}
    \end{minipage}
\end{figure}

\begin{proposition}\label{lem:G_hyp_fin_gen}
    Let $M$ be a locally-finite map in the hyperbolic plane with 1-end, such that $V(M)$ is a locally finite collection of vertices. Suppose a group $G\leq \text{Isom}(\HH^2)$ acts quasi-transitively on $M$. Then $G$ is finitely generated.
\end{proposition}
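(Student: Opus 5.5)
By Lemma \ref{lem:discrete group}, $G$ is a discrete group of isometries of $\HH^2$. By Lemma \ref{lem:hyp_fin_area}, every fundamental domain of $G$ has finite hyperbolic area. The plan is to combine these two facts with the classical theorem that a Fuchsian group with a fundamental domain of finite area is \emph{geometrically finite}. In that case a Dirichlet domain $D(\alpha)$ has only finitely many sides, and the side-pairing transformations generate the group (\cite{Fuchsian_Katok}, 3.5.4 and the finite-area results of Chapter 4; equivalently Siegel's theorem, \cite{Ratcliffe_manifolds}, \S6.8 and \S12.1).

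First reduce to the orientation-preserving case. Pass to $G^+$, which has index at most $2$ in $G$, is Fuchsian, still acts quasi-transitively on $M$, and has fundamental domains of area $\mu(F^+) = k\,\mu(F) < \infty$. If $G^+$ is finitely generated, then so is $G$, since $G$ is generated by $G^+$ together with a single coset representative.

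So assume $G$ is Fuchsian. Choose a point $\alpha\in\mathbb{D}$ fixed by no non-identity element; such a point exists because $G$ is discrete, so the fixed points of elliptic elements form a countable set. The Dirichlet domain $D(\alpha)$ is a convex polygon of finite area. The key step is to show that $D(\alpha)$ has finitely many sides. I would follow the standard argument. Since the area is finite, $\overline{D(\alpha)}$ meets $\partial\mathbb{D}$ in only finitely many points and contains no ideal sides. Otherwise $D(\alpha)$ would contain a half-space, which has infinite area; this is exactly the mechanism already used in the proof of Lemma \ref{lem:hyp_fin_area}. So outside a large compact ball $\overline{B(\alpha,R)}$, the domain $D(\alpha)$ consists of finitely many cusp neighbourhoods, each bounded by two sides meeting at an ideal vertex. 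Inside the compact ball, local finiteness of the Dirichlet tessellation (discreteness of $G$, so only finitely many translates $\gamma D(\alpha)$ meet a compact set) gives only finitely many sides. Hence there are finitely many sides altogether. The side-pairing elements $\{\tau_S\}$ are then finite in number and generate $G$ by the Poincaré polygon argument (\cite{Fuchsian_Katok}, 3.5.4).

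The main obstacle is the cusp analysis: ruling out infinitely many sides accumulating towards $\partial\mathbb{D}$ while the area stays finite. If the citation to Siegel's theorem is not accepted as a black box, I would argue directly. The region $D(\alpha)\setminus B(\alpha,R)$ has area tending to $0$ as $R\to\infty$. Each side $S$ of $D(\alpha)$ lies on the bisector of $\alpha$ and $\tau_S^{-1}(\alpha)$. Consecutive sides beyond radius $R$ must therefore converge to a single ideal point, and the corresponding vertex cycles force a parabolic element fixing that point. This shows there are only finitely many ideal vertices, each contributing exactly two sides. As an alternative, one could use quasi-transitivity directly: $M$ is connected and $G$-invariant with finitely many vertex orbits, so take a finite set $W$ of orbit representatives, joined to each other by finite paths in $M$. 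Then $G$ is generated by the finitely many elements $g$ with $gW\cap N(W)\neq\emptyset$, where $N(W)$ is the neighbourhood of $W$. This set is finite because $V(M)$ is locally finite and $G$ is discrete. Elements fixing vertices form finite stabilisers, which are again covered by this set. I would present the latter, Švarc--Milnor-type argument as the actual proof, since it avoids cusp analysis entirely and uses only Lemma \ref{lem:discrete group} and the standing hypotheses.
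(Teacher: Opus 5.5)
Your proof is correct, and the argument you actually choose (the \v{S}varc--Milnor-type one) takes a genuinely different route from the paper's.

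\textbf{The paper's route.} It reduces to the Fuchsian case and combines Lemma~\ref{lem:hyp_fin_area} (finite area) with the criterion that a finite-area Fuchsian group is cocompact if and only if it has no parabolic elements. It then excludes parabolics geometrically. A parabolic fixing an ideal vertex $y$ of a fundamental domain gives vertex-free horocyclic regions at $y$ and $gy$. Together with a large ball, these produce a finite cutset splitting $M$ into two infinite pieces, contradicting one-endedness. Cocompactness then gives finite generation.

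\textbf{Your route.} It needs only four things: $M$ is connected, $M$ is locally finite, $G$ has finitely many vertex orbits, and vertex stabilisers are finite. The last holds by discreteness of $G$, since a stabiliser is a discrete subgroup of the compact group of isometries fixing a point. Then $S=\{g: gW\cap N(W)\neq\emptyset\}$ is a finite union of cosets of stabilisers. Moreover $\langle S\rangle W$ is closed under adjacency, hence equals $V(M)$, and this forces $G=\langle S\rangle$.

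This is more elementary and more general: it uses neither finite area nor the one-end hypothesis. So you can simply drop the Siegel-theorem route and the rather impressionistic cusp analysis.

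\textbf{What the paper's route buys.} It proves the stronger fact that $G$ contains no parabolic elements. This is tacitly needed in the proof of Theorem~\ref{thm:main_Hyp}, where the torsion-free subgroup $T_0$ is taken to consist of hyperbolic elements with attained minimal translation length. Torsion-freeness alone does not exclude parabolics. If you use your proof, you can recover this separately:
\begin{itemize}
\item \v{S}varc--Milnor makes $G$ quasi-isometric to the one-ended graph $M$.
\item A finitely generated Fuchsian group containing a parabolic is non-cocompact, hence virtually free, and so not one-ended.
\end{itemize}
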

\begin{proof}
We again assume that $G$ is Fuchsian. If not, then there exists a Fuchsian subgroup $G^+\leq G$ of index 2, and it is known that if a group contains a finitely generated subgroup of finite index then it is itself finitely generated.
  A Fuchsian group $G$ such that all its fundamental domains have finite area is cocompact if and only if it contains no parabolic elements (\cite{Fuchsian_Katok}, 4.2.2). Suppose toward the contrary that $G$ has a parabolic element, then any fundamental domain $F$ for $G$ has an ideal vertex $y\in\partial \mathbb{D}$ that is the fixed point of a parabolic element $p\in G$. 
There exists at least one side $S$ of $F$ not incident with the ideal vertex $y$, since if not $F$ has empty interior. Then there exists side $S'$ of $F$ which does not contain $x$ and is paired to $S$ by an element $g\in G$ so that $gS=S'$ and $F\cap gF=S'$. Define $E=F\cup gF$ and let $\beta\in S'$ so that $\beta$ is in the interior of $E$. Define geodesic arcs $\Sigma,\Sigma':[0,\infty)\rightarrow \overline{\mathbb{D}}$ such that $\Sigma(0)=\beta=\Sigma'(0)$, $\Sigma(\infty)=y$, $\Sigma'(\infty)=gy$ and so that $\rho_\mathbb{D}(\beta,\Sigma(t))=\rho_\mathbb{D}(\beta,\Sigma'(t))=t$ for all $t>0$.
 Then there exists $r>0$ such that $B(\beta,r)\cap F\cap V(M)=F\cap V(M)$. 
  Define $\omega(t)$ to be the horocycle intersecting $\partial\mathbb{D}$ at $x$ such that $\omega(t)\cap\Sigma=\Sigma(t)$. Then as $\omega(t)$ is $p$-invariant for each $t>0$, we must have that the closed horocyclic region $\mathcal{H}_r$ defined by $\omega(r)$ contains no vertices of $M$ (see Figure \ref{fig:no_parabolic}). 
 Then there exists $R>0$ sufficiently large that $C(\beta,R)$ intersects $\mathcal{H}_r$ in two points $\zeta,\zeta'\in\mathbb{D}$, and $g\mathcal{H}_r$ in two points $z,z'\in\mathbb{D}$. Choose $R>\max\{\mathcal{E}\}$ sufficiently large so that 
  $$\min\{\rho_\mathbb{D}(y,y'),\rho_\mathbb{D}(z,z')\}>\max\{\mathcal{E}\}.$$
  Then $\mathbb{D}-(\overline{B}(\beta,R)\cup \mathcal{H}_r\cup g\mathcal{H}_r)$ consists of two infinite connected components $P_1$ and $P_2$ and the induced subgraphs of $M$ on $P_1$ and $P_2$ are infinite and connected. Moreover, if $u\in V(M)\cap P_1$ and $v\in V(M)\cap P_2$ then there cannot be an edge between them. Then $\overline{B}(\beta,R)\cap V(M)$ is a finite cutset separating $M$ into two infinite components, contradicting that $M$ has one end.\\

\end{proof}

The following theorem of Edmonds, Ewing and Kulkarni concerning the precise indices of torsion free subgroups for finitely generated Fuchsian groups will be necessary for our argument. It provides necessary and sufficient conditions on the possible indices of torsion free subgroups.
The existence of finite index torsion free subgroups is a classical result of Bundgaard, Nielsen \cite{Bungaard_Nielsen_51} and Fox \cite{Fox_torsion_free} in their proof of Fenchel's conjecture.
As a consequence we then have torsion free subgroups with arbitrarily large index, and hence subgroups with fundamental domains of arbitrarily large area. 
\begin{theorem}[\cite{Edmonds1982}]\label{thm:EEK_torsion}
    Let $G$ be an infinite, finitely generated Fuchsian group. Then there exists a constant $k>0$ such that there exists a torsion free subgroup of finite index $n$ if and only if $n$ is divisible by $k$.
\end{theorem}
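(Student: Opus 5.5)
The plan is to translate finite index subgroups into permutation representations. By Poincaré's theorem, a finitely generated Fuchsian group $G$ has a presentation determined by its signature $(g;m_1,\ldots,m_r;s)$:
$$G=\langle a_1,b_1,\ldots,a_g,b_g,x_1,\ldots,x_r,p_1,\ldots,p_s \mid x_i^{m_i}=1,\ \textstyle\prod_j[a_j,b_j]\prod_i x_i\prod_l p_l=1\rangle .$$
Here every torsion element is conjugate into some $\langle x_i\rangle$. A subgroup $H$ of index $n$ corresponds to a transitive action $\phi:G\to S_n$ on the cosets $G/H$, with $H$ the stabiliser of a point. Then $H$ is torsion free if and only if no nontrivial power $x_i^j$ ($0<j<m_i$) fixes a coset. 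This holds if and only if $\phi(x_i)$ is a product of $n/m_i$ disjoint $m_i$-cycles for every $i$. So the theorem reduces to deciding for which $n$ there is a transitive homomorphism $G\to S_n$ in which each $\phi(x_i)$ is fixed-point-free of exact cycle type $m_i^{n/m_i}$.

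\emph{Necessity.} The cycle-type condition forces $m:=\mathrm{lcm}(m_1,\ldots,m_r)$ to divide $n$. In the cocompact case ($s=0$) there is an extra constraint. The product relation forces $\prod_i\phi(x_i)$ to be a product of commutators, hence even, so the parity of $\sum_i (n/m_i)(m_i-1)$ must be even. Equivalently, by Riemann--Hurwitz, $n\chi^{\mathrm{orb}}(G)=2-2h$ must be an even integer. I would show that this parity condition is again periodic in $n$, being satisfied exactly when a fixed multiple $k\in\{m,2m\}$ divides $n$. This gives the candidate constant $k$, with $k=m$ when $s>0$.

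\emph{Sufficiency.} For every $n=kt$ we must construct such a $\phi$. When $s>0$ the group is a free product of cyclic groups, because the relation can be solved for a parabolic generator. In that case we choose each $\phi(x_i)$ to be any permutation of type $m_i^{n/m_i}$, choose the remaining free generators arbitrarily, and use these free choices (for instance an $n$-cycle for some free generator, or overlapping cycle structures) to ensure transitivity. The case $r=0$ is immediate, since then $G$ is already torsion free and $k=1$.

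\emph{Cocompact case.} When $s=0$ we must realise the product relation. If $g\ge1$, I would use the fact that every even permutation in $S_n$ is a commutator (Ore's theorem for alternating/symmetric groups). So it suffices to pick the $\phi(x_i)$ of the prescribed types with even product $\prod_i\phi(x_i)$, which the parity condition permits, and set $[\phi(a_1),\phi(b_1)]=(\prod\phi(x_i))^{-1}$. Transitivity can be arranged by adjusting the $\phi(x_i)$ or by using the freedom in $a_1,b_1$.

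\emph{Main obstacle.} The genus $0$ cocompact case (triangle groups and other polygon groups) is the hard step. There we need permutations $\sigma_1,\ldots,\sigma_r$ of exact types $m_i^{n/m_i}$ with $\sigma_1\cdots\sigma_r=1$ generating a transitive group. This is a Hurwitz existence problem for branched covers of the sphere. I would first build a single base solution for the minimal admissible degree $k$, for example from a finite quotient given by Fox/Bundgaard--Nielsen via Selberg's lemma, adjusted to degree $k$. I would then obtain all multiples $kt$ by a combinatorial gluing: take $t$ copies of the base solution and splice them together by conjugating pieces to merge orbits, while preserving the cycle types and the product. Making this splicing work uniformly, especially for small $m_i$ such as signature $(0;2,3,7)$, is where I expect most of the effort. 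The hyperbolicity assumption $\sum(1-1/m_i)>2$, which holds since $G$ is infinite, should give enough room for the construction.
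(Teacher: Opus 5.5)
\textbf{The gap is the cocompact genus-$0$ case.} The paper gives no proof of this statement; it is quoted from Edmonds--Ewing--Kulkarni. Several parts of your proposal are sound:
the reduction to transitive permutation representations in which each $\phi(x_i)$ has cycle type $m_i^{n/m_i}$ is the right framework;
the necessity argument is correct, giving $k=m$ except $k=2m$ when $G$ is cocompact, $m$ is even and an odd number of the $m_i$ have $m/m_i$ odd;
the non-cocompact case and the case $g\ge 1$ go through essentially as you say.

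The genus-$0$ cocompact case is where the content of the theorem lies. You propose to start from a solution of degree exactly $k$ obtained from Fox/Bundgaard--Nielsen/Selberg, `adjusted to degree $k$'. Those results only give a torsion-free normal subgroup of some finite index $N_0$. That index is a multiple of $k$ but in general strictly larger: for the $(0;2,3,7)$ triangle group $k=84$, while the smallest torsion-free normal subgroup has index $168$. Nothing lowers the degree of a transitive representation while keeping every $\phi(x_i)$ fixed-point-free. Indeed, a transitive degree-$k$ representation of this kind \emph{is} a torsion-free subgroup of index $k$, so `adjusting to degree $k$' restates what must be proved. Exhibiting $\sigma_1,\dots,\sigma_r\in S_k$ of types $m_i^{k/m_i}$ with $\sigma_1\cdots\sigma_r=1$ generating a transitive group is precisely the constructive part of Edmonds--Ewing--Kulkarni. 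Your proposal contains no argument for it, and the remark that hyperbolicity `gives enough room' is not one.

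\textbf{The step you expect to be hardest is automatic.} If $H\le G$ is torsion free of index $k$, then $H$ is a closed surface group of genus at least $2$ or a nontrivial free group. So $H$ surjects onto $\mathbb{Z}$ and has a subgroup of every index $t$, which is torsion free of index $kt$ in $G$. No splicing is needed. The obvious splicing would in fact fail for a triangle group: conjugating an initial segment $\rho_1,\dots,\rho_j$ by an element centralising $\rho_1\cdots\rho_j$ changes nothing for $j=1$, and for $j=2$ it conjugates the whole triple. This also shows that what the paper actually uses---some torsion-free subgroup of finite index, hence ones of arbitrarily large index---already follows from Selberg's lemma. Only the sharp `if and only if $k\mid n$' needs the EEK construction.

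\textbf{A smaller point on signature $(1;m)$.} Here transitivity comes neither from $\phi(x_1)$, which has $n/m$ orbits, nor from Ore's theorem alone. You can fix this as follows. Take $a_1=c$ an $n$-cycle and choose $b_1$ with $b_1c^{-1}b_1^{-1}=c^{s}$, where $\gcd(s,n)=1$ and $\gcd(s+1,n)=n/m$; such an $s$ exists exactly under your parity condition. Then $[a_1,b_1]=c^{1+s}$ has type $m^{n/m}$, and transitivity is automatic because $a_1$ is an $n$-cycle.
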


  The version of the theorem written above is a simplified version of the statement as it is all we need for the purposes of this paper. However, one may note that the proof itself is constructive and an explicit form for $k$ is given that depends only on the list of maximal finite orders of elements in $G$.

\begin{theorem}\label{thm:main_Hyp}
    Let $M$ be a locally-finite map in the hyperbolic plane with 1-end, such that $V(M)$ is a locally finite collection of vertices. Suppose that $G\leq\text{Isom}(\HH^2)$ acts quasi-transitively on $M$. Then there exists a proper vertex colouring $\mathcal{C}:V(M)\rightarrow \{1,\ldots,m\}$, and a finite index subgoup $T\leq G$, such that $\mathcal{C}$ is $T$-invariant.
\end{theorem}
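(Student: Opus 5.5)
We will first reduce to the orientation-preserving case and extract a large torsion-free subgroup. Replacing $G$ by $G^+$ if necessary (index at most 2, still quasi-transitive on $M$), we may assume $G$ is Fuchsian. By Lemma \ref{lem:discrete group} it is discrete. By Lemma \ref{lem:hyp_fin_area} its fundamental domains have finite area, and by Proposition \ref{lem:G_hyp_fin_gen} it is finitely generated. The proof of that proposition also shows that $G$ has no parabolic elements, so $G$ is cocompact. Theorem \ref{thm:EEK_torsion} then gives a torsion-free subgroup $T_0\leq G$ of finite index. Since $T_0$ is torsion free and cocompact, it acts freely and properly discontinuously, so $S_0=\HH^2/T_0$ is a closed orientable surface of genus $g\geq 2$. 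The finite graph $M/T_0$ is embedded on $S_0$ via the quotient map $\pi_{T_0}$; by Lemma \ref{lem:locally finite collection} there are only finitely many vertices in a compact fundamental domain.

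The key point is that $M/T$ may fail to be a simple graph. A vertex $v$ may be adjacent to $tv$, or two vertices may have a common neighbour-translate, so colouring the quotient does not yet give a proper colouring of $M$. We therefore need a finite-index subgroup $T\leq T_0$ whose nontrivial elements all move every vertex a hyperbolic distance larger than $2\max\{\mathcal{E}\}$; in fact we want more than that, which we explain below. Once $T$ acts with displacement at least $D:=2\max\{\mathcal{E}\}+1$ on every point, the quotient $M/T$ is a finite simple graph. The quotient map then restricts to an isomorphism on each closed ball of radius $\max\{\mathcal{E}\}$, so any proper colouring of the finite graph $M/T$ lifts to a proper $T$-invariant colouring of $M$. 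The number of colours $m$ is simply the chromatic number of this finite graph, which is finite, and this is all the theorem asks for. This route avoids Theorem \ref{thm:Thomassen} entirely, which is appropriate since the genus is unbounded.

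The main obstacle is producing $T$ with large minimal displacement, i.e.\ large injectivity radius of the quotient surface. The plan is to use that $T_0$, being a surface group, is residually finite. The set $\{t\in T_0: \rho_\mathbb{D}(x,tx)\leq D+\operatorname{diam}\overline{F_0}\text{ for some }x\in\overline{F_0}\}$, where $F_0$ is a compact fundamental domain for $T_0$, is finite by proper discontinuity. Residual finiteness gives a finite-index normal subgroup $T\trianglelefteq T_0$ avoiding this finite set of nontrivial elements. Any nontrivial $t\in T$ then displaces every point of $\HH^2$ by more than $D$: given a point $y$, translate it into $\overline{F_0}$ by some $s\in T_0$; the conjugate $sts^{-1}$ still lies in $T$ by normality and has the same displacement at $sy$. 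Residual finiteness of Fuchsian groups is classical (finitely generated linear groups, Malcev), so I would cite it rather than prove it. If one prefers to stay close to the paper's tools, one could instead iterate Theorem \ref{thm:EEK_torsion} together with passing to finite covers of $S_0$, but the residual finiteness route is cleaner. Finally, $T$ has finite index in $T_0$, hence in the original group, which completes the plan.
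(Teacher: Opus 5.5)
Your proposal is correct and is essentially the paper's argument. You pass to $G^+$, take a finite-index torsion-free $T_0$ from Theorem \ref{thm:EEK_torsion}, use residual finiteness to get a finite-index normal $T\trianglelefteq T_0$ with no nontrivial element of small displacement, and lift a colouring of the finite loopless quotient $M/T$; the paper differs only in giving every vertex of $M/T$ its own colour. Your explicit normality argument (a finite bad set defined via displacement on a compact fundamental domain, then conjugation to control displacement at every point) is more careful than the paper, which wrongly asserts that only finitely many elements of $T_0$ have translation length below $L$ (each such length is attained by an infinite conjugacy class) and is saved only because its $T_i$ are normal.
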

\begin{proof}
    It immediately follows that exists an orientation-preserving Fuchsian subgroup $G^+\leq G$ of index at most 2, acting quasi-transitively on $M$. It follows by Theorem \ref{thm:EEK_torsion} that there exists a finitely generated minimal index torsion free subgroup $T_0\leq G^+\leq G$ of index $[G:T_0]=k$. Let $\text{Tr}(\gamma)$ denote the translation distance of an element $\gamma\in G$, and $\text{Tr}(H)$ denote the infimum translation distance of a subgroup $H\leq G$. Since $T_0$ is discrete and contains only hyperbolic elements it follows that $\text{Tr}(T_0)$ is attained by some element $t\in T_0$ so that $\text{Tr}(t)=\text{Tr}(T_0)$. Furthermore, the translation lengths $\{\text{Tr}(t):t\in T_0\}$ forms a discrete set with finitely many elements attaining any given translation length. It then follows that we can write elements in a sequence of ascending translation lengths $T_0=\{t_1,t_2,\ldots\}$ such that $\text{Tr}(t_1)\leq \text{Tr}(t_2)\leq \ldots$\;. Then for a given translation length $L>0$ the set $A_L:=\{t_i: \text{Tr}(t_i)<L\}$ is finite, and can be written $A_L=\{t_1,\ldots,t_{N}\}$ for some $N>0$. Fix $L>0$ such that $L>\max\{\mathcal{E}\}$.
    Since Fuchsian groups are residually finite, for each $1\leq i\leq N$ there exists an finite index normal subgroup $T_i\leq T_0$ such that $t_i\not\in T_i$. Let $T=\bigcap_{i=1}^{N}T_i$. Then as a finite intersection of finite index subgroups $T$ is finite index in $T_0$, and by construction there are no edges $\{u,v\}\in E(M)$ such that $u,v$ are both in the same $T$-orbit. 
    Let $\pi:\HH^2\rightarrow \HH^2/T$ be the quotient map. It follows the induced quotient graph $M/T$ embedded on $\HH^2/T$ is finite, and contains no loops. Let $m:=|V(M/T)|$. Define a colouring $\widetilde{\mathcal{C}}:V(M/T)\rightarrow \{1,\ldots,m\}$ by colouring each vertex uniquely. Then this lifts to a proper colouring $\mathcal{C}:V(M)\rightarrow \{1,\ldots,m\}$ that is $T$-invariant.\\
\end{proof}

\begin{remark}\label{rem:index_colours}
One notes that the number of colours $m$ of a periodic colouring constructed as above is directly proportional to the index of the subgroup $T$. Since $|V(M/T)|=[G:T]\cdot |V(M/G)|$.
\end{remark}
\begin{remark}\label{rem:LERF}
    Fuchsian groups satisify the stronger property of being \textit{locally extended residually finite} (LERF) \cite{Lopez94}. This has the consequence that there exist finite index torsion free subgroups $T_i\leq T_0$, so that $t_i\notin T_i$, for all $1\leq i\leq N$, that are not necessarily normal in $T_0$.
\end{remark}

\begin{corollary}
    Let $\Gamma$ be a locally-finite, quasi-transitive, 3-connected planar graph with 1-end. Then $\Gamma$ permits a periodic proper vertex colouring.
\end{corollary}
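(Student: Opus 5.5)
The plan is to combine Babai's Theorem \ref{thm:Babai quasi-transitive} with the two main theorems of the preceding sections. Let $\Gamma$ be a locally-finite, quasi-transitive, 3-connected planar graph with 1-end. Since $\Gamma$ is infinite and 1-ended, it has at most one end, so Theorem \ref{thm:Babai quasi-transitive} gives an embedding $\varphi:\Gamma\to\mathcal{X}$, with $\mathcal{X}=\EE^2$ or $\HH^2$, whose image is a map $M=\varphi(\Gamma)$. Under this embedding every automorphism of $\Gamma$ is induced by an isometry of $\mathcal{X}$, so $G:=\text{Aut}(\Gamma)\cong\text{Aut}(M)\leq\text{Isom}(\mathcal{X})$. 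Because $\Gamma$ is quasi-transitive, $G$ acts quasi-transitively on $M$. The map $M$ is locally finite and 1-ended because $\Gamma$ is, and Lemma \ref{lem:locally finite collection} shows that $V(M)$ is a locally finite collection of vertices.

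With these hypotheses in place, I would split into the two geometries. If $\mathcal{X}=\EE^2$, Theorem \ref{thm:main_Euclid} gives a proper colouring $\mathcal{C}:V(M)\to\{1,\dots,5\}$ that is invariant under a finite index subgroup $T\leq G$. If $\mathcal{X}=\HH^2$, Theorem \ref{thm:main_Hyp} gives a proper colouring $\mathcal{C}:V(M)\to\{1,\dots,m\}$ that is invariant under a finite index subgroup $T\leq G$. In either case, pulling back along $\varphi$ gives a proper colouring $\mathcal{C}\circ\varphi$ of $\Gamma$. It is $T$-invariant, where $T$ is now viewed as a subgroup of $\text{Aut}(\Gamma)$ through the isomorphism $\text{Aut}(\Gamma)\cong\text{Aut}(M)$.

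The remaining step is to check that this colouring is periodic, meaning that the colour-preserving automorphisms act quasi-transitively. These automorphisms contain $T$. Since $[G:T]<\infty$, every $G$-orbit on $V(\Gamma)$ splits into at most $[G:T]$ orbits of $T$: writing $G=\bigcup_{i=1}^{[G:T]} g_iT$ gives $Gv=\bigcup_i g_i T v$, and $g_iTv=(g_iTg_i^{-1})g_iv$. To avoid passing through conjugates, I would argue with right cosets instead: $G=\bigcup_i Tg_i$, so $Gv=\bigcup_i T(g_iv)$. Hence $T$ has at most $[G:T]$ times as many vertex orbits as $G$, which is finitely many, and the colouring is periodic.

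The main obstacle is bookkeeping rather than mathematics. One must make sure that the group used in Theorems \ref{thm:main_Euclid} and \ref{thm:main_Hyp} really is the full automorphism group induced by isometries. Theorem \ref{thm:Babai quasi-transitive} guarantees exactly this: every automorphism of $\Gamma$ is induced by an isometry of $\mathcal{X}$. This lets us identify $T\leq\text{Isom}(\mathcal{X})$ with a subgroup of $\text{Aut}(\Gamma)$ acting compatibly with $\varphi$, which is what the pull-back step needs.
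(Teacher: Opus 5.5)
Your proposal is correct and follows essentially the same route as the paper: embed $\Gamma$ via Babai's theorem, use the locally-finite-collection lemma, identify $\text{Aut}(\Gamma)$ with $G\leq\text{Isom}(\mathcal{X})$, and invoke Theorems~\ref{thm:main_Euclid} and~\ref{thm:main_Hyp}. Your extra coset argument, showing that a finite-index $T\leq G$ still has finitely many vertex orbits so the $T$-invariant colouring is genuinely periodic, is a correct detail that the paper leaves implicit.
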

\begin{proof}
    By Theorem \ref{thm:Babai quasi-transitive} $\Gamma$ embeds into one of the natural geometries $\EE^2$ or $\HH^2$ as a locally-finite map $M$ with 1-end such that the automorphisms of $M$ are induced by isometries in the geometry. It follows from Lemma \ref{lem:locally finite collection} that $V(M)$ is a locally finite collection of vertices. Then $\text{Aut}(\Gamma)\cong\text{Aut}(M)=G\leq \text{Isom}(\mathcal{X})$, for $\mathcal{X}\in\{\EE^2,\HH^2\}$. Then $G$ acts quasi-transitively on $M$. The result then follows from Theorems \ref{thm:main_Euclid} and $\ref{thm:main_Hyp}$.\\
\end{proof}

\section{Graphs that are not 3-connected}\label{sec:not_3-con}
We turn now to the case that $\Gamma$ is a locally-finite, planar connected graph with 1-end that is not 3-connected.
A \textit{subdivision} of a graph $X$ is a graph obtained by adding finitely many vertices of degree 2 to each edge of $X$. A \textit{topological subgraph} $Y$ of a graph $X$ is a graph such that some subdivision of $Y$ is a subgraph of $X$. \\
To analyse the $\Gamma$ that are not 3-connected we will make use of known reduction technique first introduced by Mac Lane \cite{MacLane_Reduction}, and used in \cite{Babai1975}, \cite{Babai97}, \cite{Hopcroft1972} among many others\footnote{See \cite{KlavNedZeman2021} for further discussion and references on applications of this reduction technique.}. 
This technique produces a topological subgraph of $\Gamma$ that is 3-connected and well behaved under the inherited action of the automorphisms of $\Gamma$. 

Though this technique is described in detail in the above mentioned papers we will outline it here for completeness and ease of reference.
First we introduce a number of notions; for a subset $U\subseteq V(\Gamma)$ the \textit{boundary} $\partial U$ of $U$ is the set of vertices not contained in $U$ that are adjacent to a vertex in $U$. If $\Gamma$ is $\kappa$-connected, and $F\subsetneq V(\Gamma)$ a proper subset such that $F$ induces a connected subgraph, then it follows that $|\partial F|\geq \kappa$. If $|\partial F|=\kappa$ then $F$ is called a \textit{fragment} and the set $C=\partial F$ a \textit{minimal cutset}. An inclusion minimal fragment is called an \textit{atom}. That is, a fragment $F$ is an atom if and only if there are no fragments $F'$ such that $F'\subsetneq F.$ 
\\ 
The reduction constructs a sequence of graphs $\Gamma=\Gamma_0,\Gamma_1,\ldots, \Gamma_m$, so that $\Gamma_{i+1}$ is a topological subgraph of $\Gamma_i$ and; 
\begin{enumerate}
    \item each $\Gamma_i$ is connected,
    \item each $\Gamma_i$ has 1-end,
    \item $V(\Gamma_i)\supseteq V(\Gamma_{i+1})$, and
    \item $\text{Aut}(\Gamma_i)$ acts quasi-transitively on $V(\Gamma_{i+1})$
\end{enumerate}
(see \cite[Theorem 4.1]{Babai97}). The reduction from $\Gamma_i$ to $\Gamma_{i+1}$ goes as follows; We first assume that $\Gamma_i$ is 2-connected.
For each atom $A\subset \Gamma_i$ remove the vertices and incident edges of $V(A)$, and replace them by an edge between the two vertices of $\partial A$. If $\Gamma_i$ is 1-connected but not 2-connected, then delete all atoms to obtain $\Gamma_{i+1}$. Let $G:=\text{Aut}(\Gamma)$ and $G_i:=\text{Aut}(\Gamma_i)$ for $i\leq m$.
In addition to (iv) above, we may further conclude from the inherited structure of each $\Gamma_{i+1}$ that, the induced action of $G_i$ on $V(\Gamma_{i+1})$ is via graph automorphisms, implies there exists some subgroup of $G_{i+1}$ that acts freely on $V(\Gamma_{i+1})$, and with equivalent action to that of $G_i$ on $V(\Gamma_i+1)$.
Moreover, we note that given any subgroup $H$ of $G$, we have the inherited action on $V(\Gamma)$ and in turn this defines an induced action by graph automorphisms on $V(\Gamma_i)$ for all $i\leq m$.

It is known that locally-finite, planar, vertex transitive graphs with 1-end are 3-connected \cite[Theorem 1]{Babai1975}. Hence this reduction sequence is guaranteed to conclude in a 3-connected graph $\Gamma_m$ in $m<\infty$ steps, as the number of vertex orbits strictly reduces with each step. As $\Gamma_m$ is a locally-finite, quasi-transitive, 3-connected planar graph with 1-end there exists an embedded map $M_m$ in $\mathcal{X}$ such that all automorphisms are induced by isometries of $\mathcal{X}$. As such $G_m$ can be identified with a group of discrete isometries of $\mathcal{X}$.

\begin{definition}
For $g,h\in G$, we say that $g\sim h$ if and only if $g(v)=h(v)$ for all $v\in V(\Gamma_m)\subset V(\Gamma)$. If $g\sim h$ we may write that $g\in[h]$.
\end{definition}
Let $\overline{G}$ denote the quotient group $G/\sim$.
\begin{lemma}\label{lem:subgroup_of_G}
    There exists a subgroup of $G$ that can be identified with $\overline{G}$.
\end{lemma}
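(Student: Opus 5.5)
The plan is to build an explicit homomorphic section $s:\overline{G}\to G$ of the restriction map $G\to\overline{G}$. The kernel of this map is the pointwise stabiliser $K$ of $V(\Gamma_m)$, so $\overline{G}=G/K$, and the image $s(\overline{G})$ is then the required subgroup. I would argue by induction along the reduction $\Gamma=\Gamma_0,\dots,\Gamma_m$. It suffices to show, for each $i$, that every automorphism of $\Gamma_{i+1}$ induced by $G_i$ extends to $\Gamma_i$ so that the extension is multiplicative. The sections for the individual steps then compose into $s$.

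\textbf{Step 1 (local pieces).} Fix a step $i$. Every vertex of $\Gamma_i$ not in $\Gamma_{i+1}$ lies in a deleted atom $A$. Since $\Gamma_i$ is locally finite and 1-ended, $A$ is finite. Its boundary $\partial A$ is an \emph{attachment site} $\sigma$ in $\Gamma_{i+1}$: a vertex in the 1-connected case, or a pair $\{a,b\}$ spanning a new edge in the 2-connected case. For each site $\sigma$ let $H_\sigma$ be the finite union of the atoms attached at $\sigma$, together with $\sigma$. Then $\Gamma_i$ is $\Gamma_{i+1}$ (with the added edges removed) plus the disjoint union of the graphs $H_\sigma$ glued along their sites.

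\textbf{Step 2 (orbit representatives and transport).} Let $\overline{G}_i$ be the group of automorphisms of $\Gamma_{i+1}$ induced by $G_i$. It permutes the sites, since induced automorphisms map atoms to atoms. Choose one representative $\sigma_0$ from each $\overline{G}_i$-orbit of sites; by quasi-transitivity there are finitely many. Suppose each stabiliser $S=\mathrm{Stab}_{\overline{G}_i}(\sigma_0)$ comes with a homomorphism $\lambda:S\to\mathrm{Aut}(H_{\sigma_0})$ that is realised by elements of $G_i$ and agrees with $S$ on $\sigma_0$. Choose coset representatives $c_\sigma\in\overline{G}_i$ with $c_\sigma\sigma_0=\sigma$, and fixed lifts $\tilde c_\sigma\in G_i$ mapping $H_{\sigma_0}$ onto $H_\sigma$. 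For $\bar g\in\overline{G}_i$ define $s(\bar g)$ to be $\bar g$ on $\Gamma_{i+1}$. On $H_\sigma$ define it as $\tilde c_{\bar g\sigma}\circ\lambda\bigl(c_{\bar g\sigma}^{-1}\bar g c_\sigma\bigr)\circ\tilde c_\sigma^{-1}$. This is the standard induced-representation construction. A routine check shows that $s$ is a homomorphism, that it agrees on overlaps (the sites), and that each $s(\bar g)$ preserves adjacency. Hence $s(\bar g)\in G_i$, and $s$ is a section.

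\textbf{Step 3 (the main obstacle: splitting at a single site).} It remains to construct $\lambda$. The elements of $S$ fixing $\sigma_0$ pointwise can be sent to the identity on $H_{\sigma_0}$. If $\sigma_0$ is a single vertex, this covers all of $S$ and we are done. If $\sigma_0=\{a,b\}$ and some element of $S$ swaps $a$ and $b$, I need an automorphism $\iota$ of $H_{\sigma_0}$ that swaps $a,b$, satisfies $\iota^2=\mathrm{id}$, and is realised by $G_i$. Setting $\lambda$ to be trivial on the pointwise part and $\iota$ on the swapping coset then gives a homomorphism.

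To build $\iota$, pick a swapping element $g$ and consider its action on the atoms at $\sigma_0$. For an atom $B$ with $gB\neq B$, set $\iota=g$ on $B$ and $\iota=g^{-1}$ on $gB$, pairing atoms off. For an atom with $gB=B$, I need an involution of $B$ exchanging $a$ and $b$. This is the step I expect to be hardest, because an endpoint-swapping automorphism of a finite graph need not have order $2$.

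I would use the structure of atoms in planar $2$-connected graphs. Such an atom, together with the edge $ab$, is either a cycle, where the reflection fixing the midpoint of the $a$–$b$ path is an involution, or a subdivided $3$-connected planar graph. In the second case, the finite version of Theorem~\ref{thm:Babai quasi-transitive} (Mani's theorem for $3$-connected planar graphs) realises all of its automorphisms as isometries of the sphere. The group of isometries preserving $\{a,b\}$ is then a finite group, and I would look for an order-$2$ element in it that swaps $a$ and $b$, such as the reflection across the great circle through the midpoint of $ab$. If that fails for some atom, the fallback is to replace $\overline{G}$ by its finite-index subgroup of elements preserving each site orientation. That weaker statement still suffices for the periodic-colouring application.
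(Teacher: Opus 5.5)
Your strategy is sound and differs from the paper's, but as written it has a gap at exactly the step you flag as hardest. The paper fixes a transversal $\{s_i\}$ of $\sim$ and asserts that the selection map $\varphi$ is a homomorphism. Its displayed computation, however, only shows that $\varphi(g)\varphi(h)$ and $\varphi(gh)$ agree on $V(\Gamma_m)$, not that they are equal in $G$. So it never shows the transversal is closed under products. You are right that an actual splitting of $G\to\overline{G}$ must be built. Your induced construction in Step~2 correctly reduces this to the local problem: at each edge-site $\{a,b\}$ reversed within its stabiliser, find an involution $\iota$ of $H_{\sigma_0}$ swapping $a$ and $b$. But Step~3 stops there. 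The reflection you propose need not be a symmetry (a chiral atom may be reversed only by a half-turn). Your fallback proves a weaker statement than the lemma. It also relies on a finite-index subgroup whose existence you do not establish; a torsion-free finite-index subgroup of the discrete group $\overline{G}$ would do, since it acts freely on the vertices and edges of $M_m$, hence on all hanging pieces, making every $\lambda$ trivial.

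The missing idea is that no search is needed. If $|B|\ge 2$, minimality of the atom forces $H_B=\Gamma_i[B\cup\{a,b\}]+ab$ to be $3$-connected: a component of $H_B-\{x,y\}$ avoiding $a$ and $b$ would be a smaller fragment. It is also planar. In a $3$-connected planar graph, every automorphism $k$ reversing an edge $ab$ is an involution. By Whitney, $k$ is an automorphism of the unique spherical map. Then $k^2$ preserves orientation and fixes the dart $(a,b)$, so it fixes every dart at $a$ and, by connectivity, is the identity. In Mani's realisation the same conclusion reads: $k$ fixes the centre and the midpoint $m$ of $ab$, and acts on $m^{\perp}$ as an element of $O(2)$ negating $b-a$, i.e.\ a half-turn or a reflection. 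Hence $\iota=g|_B$ works on every $g$-invariant atom; the cases $|B|=1$ or a cycle are immediate.

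Two smaller repairs are also needed.
\begin{itemize}
\item Your pairing rule is inconsistent on an odd $\langle g\rangle$-orbit $B,gB,\dots,g^{\ell-1}B$ with $\ell\ge 3$; nothing prevents $g$ from cycling three isomorphic atoms. Pair off all but $g^{\ell-1}B$ and use $g^{\ell}$ there. It reverses $ab$, so by the same fact it is an involution on that atom.
\item The step-wise sections do not compose as stated. The section at step $i$ is built from arbitrary lifts in $\mathrm{Aut}(\Gamma_i)$, and these need not extend to $\Gamma_{i-1}$: they may exchange edges of $\Gamma_i$ that replaced non-isomorphic atoms. Run the construction inside the image of $G$ in $\mathrm{Aut}(\Gamma_i)$, taking $\tilde c_\sigma$ and $g$ to be restrictions of elements of $G$. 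Then check that an automorphism of $\Gamma_i$ agreeing piecewise with such restrictions extends to $\Gamma$. This holds because the extension data lives on pairwise disjoint pieces hanging at the vertices and edges of $\Gamma_i$.
\end{itemize}
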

\begin{proof}
Let $\{1,s_1,s_2,s_3,\ldots\}$ be a set of equivalence class representatives so that $g\in[s_i]$ for some $i\geq 0$ for all $g\in G$ ($s_0=1$). Define a function $\varphi:G\rightarrow G$ such that, if $g\in[s_i]$ then $\varphi(g)=s_i$. Then for $g,h$ in $[s_i],[s_j]$ respectively, we have that for all $u\in V(\Gamma_m)\subset V(\Gamma)$, 
$$(\varphi(g)\varphi(h))(u)=\varphi(g)(\varphi(h)(u))=s_i(s_j(u))=(s_is_j)(u)=\varphi(gh)(u).$$
Hence $\varphi$ is a homomorphism, and $\text{Ker}(\varphi)=[1]$ is a normal subgroup of $G$.
   The result then follows by the first isomorphism theorem, with $\overline{G}:=\text{Im}(\varphi)$.\\
\end{proof}

It is helpful now to highlight a consequence of previous work. For $\mathcal{X}\in\{\EE^2,\HH^2\}$, let $\mu_\mathcal{X}(\cdot)$ denote the $\mathcal{X}$-area of a connected subset. 
\begin{lemma}\label{lem:fin_area_action_only}
    Let $\Gamma$ be a locally-finite, 3-connected, quasi-transitive, planar graph with 1-end with embedding $M$ in $\mathcal{X}$($=\EE^2$ or $\HH^2$). Suppose that $G\leq \text{Isom}(\mathcal{X})$ is a group acting quasi-transitively on $V(M)$, then $G$ is a discrete group of isometries such that if $F\subset\mathcal{X}$ is a fundamental domain for $G$ then $\mu_\mathcal{X}(F)<\infty$.
\end{lemma}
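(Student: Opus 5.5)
This lemma just assembles earlier results, using only that $G$ acts by isometries and quasi-transitively on $V(M)$, not that $G$ is the full automorphism group. The plan is to reduce to the hypotheses of the earlier results and apply them.

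First, I would invoke Lemma \ref{lem:locally finite collection}. Its hypotheses concern only $\Gamma$ and Babai's embedding $M$: the graph is locally-finite, 3-connected, quasi-transitive, planar and 1-ended. It therefore gives that $V(M)$ is a locally finite collection of vertices, independently of the group $G$. The map $M$ is locally-finite and 1-ended because $\Gamma$ is. Hence Lemma \ref{lem:discrete group} applies verbatim to the quasi-transitive action of $G\leq\text{Isom}(\mathcal{X})$, and shows that $G$ is discrete. So fundamental domains for $G$ exist, for instance Dirichlet domains.

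Next I would split according to $\mathcal{X}$.

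If $\mathcal{X}=\HH^2$, Lemma \ref{lem:hyp_fin_area} is stated for an arbitrary $G\leq\text{Isom}(\HH^2)$ acting quasi-transitively on such an $M$. It gives $\mu(F)<\infty$ for every fundamental domain directly.

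If $\mathcal{X}=\EE^2$, Lemma \ref{lem:wallpaper} shows that $G$ is a wallpaper group. Its translation subgroup $T'$ then has finite index in $G$, since the point group of a discrete Euclidean group is finite. A fundamental domain for $T'$ is the open parallelogram spanned by two generating translations, which has finite area. A fundamental domain $F$ for $G$ satisfies $\mu(F)=\mu(F')/[G:T']$, where $F'$ is a fundamental domain for $T'$: a fundamental domain of a finite index subgroup is, up to measure zero, a union of $[G:T']$ translates of one for $G$, and fundamental domains of a discrete group all have equal area. This gives finiteness.

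The main obstacle is making sure the earlier lemmas were not secretly using $G=\text{Aut}(M)$. For example, the bound on the set $\mathcal{E}$ of edge lengths there comes from quasi-transitivity of the acting group. Here $G$ is only quasi-transitive on vertices, so the finiteness of the edge orbits needs a separate check. It holds because $M$ is locally finite: the edges at a vertex are finite in number, so there are finitely many $G$-orbits of edges and hence finitely many edge lengths. With that checked, the cited lemmas apply as stated. The only further point is the area-versus-index relation for fundamental domains in the Euclidean case, which I would take from the same standard reference \cite{Ratcliffe_manifolds} as in the hyperbolic discussion.
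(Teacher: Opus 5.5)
Your proposal is correct and takes the paper's route: the paper's proof is simply the one-line citation of Lemmas \ref{lem:wallpaper} and \ref{lem:hyp_fin_area}. You also spell out steps the paper leaves implicit, namely that $V(M)$ is a locally finite collection (Lemma \ref{lem:locally finite collection}), that $G$ is discrete (Lemma \ref{lem:discrete group}), and that a wallpaper group has a finite-area fundamental domain because its translation subgroup has finite index.
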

\begin{proof}
  Follows from Lemmas \ref{lem:wallpaper} and \ref{lem:hyp_fin_area}.\\
\end{proof}

\begin{lemma}\label{lem:G_bar_acts_quasi-transitive_Gamma_m}
    There exists a subgroup of $G_m$ that can be identified with $\overline{G}$. In particular one may consider $\overline{G}$ as a discrete group of isometries of $\mathcal{X}$ that acts quasi-transitively on $M_m$.
\end{lemma}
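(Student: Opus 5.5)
We first produce a homomorphism $\psi:G\to G_m$ with kernel $[1]$, and then use Babai's embedding to realise its image as isometries. The natural candidate is restriction: every $g\in G$ acts on $V(\Gamma)$, hence, by the remark following the reduction, by graph automorphisms on each $V(\Gamma_i)$ and in particular on $V(\Gamma_m)$.

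\textbf{Step 1: restriction is well defined.} Define $\psi(g):=g|_{V(\Gamma_m)}$. To see $\psi(g)\in G_m$, we argue inductively along the reduction that $g$ maps $V(\Gamma_{i+1})$ to itself and preserves its edges. The reduction is canonical: atoms, minimal cutsets and the replacement edges between the vertices of $\partial A$ are all defined purely in graph-theoretic terms, so they are permuted by automorphisms of $\Gamma_i$. Thus $g$ induces an element of $G_{i+1}$, and after $m$ steps an element of $G_m$.

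\textbf{Step 2: the image is $\overline{G}$.} The map $\psi$ is a homomorphism, since restriction respects composition. By the definition of $\sim$, its kernel is exactly $[1]$, the subgroup acting trivially on $V(\Gamma_m)$. By the first isomorphism theorem, $\psi(G)\cong G/[1]=\overline{G}$, which gives the first claim; this is consistent with Lemma \ref{lem:subgroup_of_G}.

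\textbf{Step 3: isometries and quasi-transitivity.} By Theorem \ref{thm:Babai quasi-transitive}, $\Gamma_m$ is locally finite, 3-connected, quasi-transitive, planar and 1-ended, so $G_m\cong\text{Aut}(M_m)\leq\text{Isom}(\mathcal{X})$, with each automorphism induced by a unique isometry. Uniqueness holds because an isometry fixing the vertex set of a 1-ended map pointwise fixes infinitely many non-collinear points, hence is the identity. So $\overline{G}\cong\psi(G)$ is a group of isometries.

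For quasi-transitivity, property (iv) of the reduction gives that $G=G_0$ acts on $V(\Gamma_1)$ with finitely many orbits. Inductively, the $G$-orbits on $V(\Gamma_{i+1})\subseteq V(\Gamma_i)$ are unions of intersections with $G$-orbits on $V(\Gamma_i)$. Since $G$ has finitely many orbits on $V(\Gamma)$ and $V(\Gamma_m)\subseteq V(\Gamma)$ is $G$-invariant, the action on $V(\Gamma_m)$ has finitely many orbits. These are exactly the $\psi(G)$-orbits. Discreteness then follows from Lemma \ref{lem:locally finite collection} and Lemma \ref{lem:discrete group}, or directly from Lemma \ref{lem:fin_area_action_only}.

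\textbf{Main obstacle.} The hard part is Step 1: verifying that automorphisms of $\Gamma_i$ genuinely descend to automorphisms of $\Gamma_{i+1}$. Two points need checking. First, distinct atoms with the same boundary pair must not produce conflicting multi-edges; we would collapse these to a single edge, which is automorphism-invariant. Second, the replacement edges must be preserved. We would handle this by noting that $\partial(gA)=g(\partial A)$ and that $gA$ is again an atom because $g$ preserves connectivity and boundary sizes. We would then cite \cite[Theorem 4.1]{Babai97} for the remaining bookkeeping.
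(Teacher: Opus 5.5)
Your proposal is correct and follows the same route as the paper. You restrict the action of $G$ to the $G$-invariant set $V(\Gamma_m)$, note that it factors through $G/\sim$ and lands in $G_m$ because the reduction is inherited by automorphisms, realise it by isometries through Babai's embedding, and get quasi-transitivity from the finitely many $G$-orbits on $V(\Gamma)\supseteq V(\Gamma_m)$. Your version is more explicit than the paper's: the restriction homomorphism $\psi$ with kernel $[1]$ identifies $\psi(G)$ with $\overline{G}$ exactly (the paper's ``free action'' should read ``faithful''), and you also address discreteness and why each automorphism corresponds to a single element of $\text{Isom}(\mathcal{X})$, which the paper leaves implicit.
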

\begin{proof}
There is a natural induced action of $G$ on $V(\Gamma_m)$ as a subset of $V(\Gamma)$. By definition we have that the action of $G$ on $V(\Gamma_m)$ is equivalent to the free action of $G/\sim$ on $V(\Gamma_m)$.
Moreover, by the inherited structure it follows that $G/\sim$ acts on $\Gamma_m$ via graph automorphisms and, since $G/\sim$ is a group closed under composition, it follows there is an isomorphic subgroup $\widetilde{G}\leq G_m$. Hence under the embedding of Theorem \ref{thm:Babai quasi-transitive} we may consider $\widetilde{G}$ as a subgroup of isometries of $\mathcal{X}$.
Since $G$ acts quasi-transitively on $V(\Gamma)\supset V(\Gamma_m)$ it follows that $G/\sim$ acts quasi-transitively on $V(\Gamma_m)$, and hence $\widetilde{G}$ acts quasi-transitively on $V(M_m)$.
\\
\end{proof}

\begin{theorem}\label{thm:big_thm_graph_permit_col}
    Let $\Gamma$ be a locally-finite, quasi-transitive planar graph with 1-end. Then $\Gamma$ permits a periodic vertex colouring.
\end{theorem}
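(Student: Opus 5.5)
The plan is to reduce to the 3-connected case already handled and then pull the colouring back to $\Gamma$. If $\Gamma$ is 3-connected, the Corollary following Theorem \ref{thm:main_Hyp} applies directly. Otherwise we run the Mac Lane reduction $\Gamma=\Gamma_0,\Gamma_1,\ldots,\Gamma_m$, ending in the 3-connected graph $\Gamma_m$ with embedded map $M_m$ in $\mathcal{X}\in\{\EE^2,\HH^2\}$. By Lemma \ref{lem:G_bar_acts_quasi-transitive_Gamma_m}, $\overline{G}$ is a discrete group of isometries of $\mathcal{X}$ acting quasi-transitively on $M_m$. By Lemma \ref{lem:locally finite collection}, $V(M_m)$ is a locally finite collection of vertices. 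So Theorems \ref{thm:main_Euclid} and \ref{thm:main_Hyp} apply to $M_m$ with the group $\overline{G}$, and give a finite index subgroup $\overline{T}\leq\overline{G}$ with $[\overline{G}:\overline{T}]<\infty$.

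Rather than colour $M_m$, I would use $\overline{T}$ to build a finite index subgroup of $G$. Let $\psi:G\to\overline{G}$ be the quotient homomorphism $g\mapsto[g]$, and set $T:=\psi^{-1}(\overline{T})\leq G$, which has finite index in $G$. In the hyperbolic case I would also arrange, as in the proof of Theorem \ref{thm:main_Hyp}, that $\overline{T}$ is torsion free and moves every vertex of $M_m$ far: no element of $\overline{T}\setminus\{1\}$ fixes a vertex of $M_m$, or sends a vertex within a prescribed combinatorial distance $D$ of itself.

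The key step is to show that $T$ acts on $V(\Gamma)$ with finitely many orbits, and that no edge of $\Gamma$ joins two vertices in the same $T$-orbit. Once that holds, colouring each $T$-orbit with a distinct colour gives a proper $T$-invariant colouring with finitely many colours, and $T$ is finite index in $G$, so the colouring is periodic. Finitely many orbits is immediate, since $T$ has finite index in the quasi-transitive group $G$.

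The main obstacle is properness. Take an edge $\{u,tu\}$ with $t\in T$. Each vertex of $\Gamma$ either lies in $\Gamma_m$ or lies in a finite piece, an atom or an iterated atom, attached to $\Gamma_m$ along a cutset of at most two vertices of $\Gamma_m$. Here I would use that every such piece is finite, which follows from 1-endedness and local finiteness, and that the pieces have uniformly bounded size by quasi-transitivity. Then $u$ and $tu$ lie within bounded distance $D$ of some attaching vertices $a,b\in V(\Gamma_m)$, with $ta$ an attaching vertex of $tu$'s piece. It follows that $[t]$ moves $a$ a bounded distance in $\Gamma_m$. Choosing $\overline{T}$ to displace all vertices of $M_m$ by more than $D'$, via residual finiteness as in Theorem \ref{thm:main_Hyp} or by taking large multiples of translations as in Theorem \ref{thm:main_Euclid}, forces $[t]=1$.

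This leaves the delicate case of elements $t\in\ker\psi$, which fix $\Gamma_m$ pointwise but may permute the attached pieces. Since $\ker\psi$ could be infinite, I would not hope to control it directly. Instead I would use the finiteness of each piece $P$, together with local finiteness, to pass to the kernel of the action of $T$ on each piece. More precisely, I would replace $T$ by $T\cap\bigcap_{P}\mathrm{Stab}$, where $P$ ranges over representatives of the finitely many $G$-orbits of pieces. I expect verifying that this remains finite index, or alternatively colouring pieces consistently via a lift of $\overline{T}$ as in Lemma \ref{lem:subgroup_of_G}, to be the hardest step. My fallback is to take $T$ to be the image of $\overline{T}$ under the section of Lemma \ref{lem:subgroup_of_G} and check invariance directly.
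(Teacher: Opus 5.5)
\textbf{There is a genuine gap.} It is the step you flag as hardest, and it cannot be closed inside your framework: in general \emph{no} finite index subgroup of $G=\text{Aut}(\Gamma)$ has orbits that are independent sets.

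For example, attach to every vertex $v$ of the square grid a triangle $v a_v b_v$. The resulting $\Gamma$ is locally finite, planar, quasi-transitive and 1-ended. The reduction deletes the atoms $\{a_v,b_v\}$, so $\Gamma_m$ is the grid, and $\ker\psi\cong\prod_v \ZZ/2$ (swap $a_v,b_v$ independently). Any finite index $T\leq G$ meets $\ker\psi$ in a finite index, hence nontrivial, subgroup. So $T$ contains some $t$ with $ta_v=b_v$, which is adjacent to $a_v$. Thus $\psi^{-1}(\overline{T})$ cannot be repaired by passing to any further finite index subgroup. Indeed, the colour-preserving group of any proper colouring of this $\Gamma$ meets $\ker\psi$ trivially, so it has infinite index in $G$, even though periodic colourings exist (2-colour the grid, give every $a_v$ colour 3 and every $b_v$ colour 4).

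Your specific repair also fails on its own terms. The stabiliser of a single piece $P$ has infinite index in $G$. Its image in $\overline{G}$ preserves the finite attaching set of $P$, so that image is finite, and $T\cap\mathrm{Stab}(P)$ has infinitely many orbits on $V(\Gamma_m)$.

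\textbf{The missing idea.} The colour-preserving group should not be a finite index subgroup of $G$ at all. It should be a group of automorphisms of $\Gamma$ that maps isomorphically onto a finite index subgroup of $\overline{G}$ and carries the attached pieces along rigidly. The paper obtains this by re-inserting the atoms $\overline{G}$-equivariantly into $\mathcal{X}$. This produces a map $M$ of the whole of $\Gamma$ on which $\overline{G}$ acts by isometries; each such isometry is an automorphism of $\Gamma$, and there is no kernel to control. The paper then applies Theorems \ref{thm:main_Euclid} and \ref{thm:main_Hyp} to $M$ itself.

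Your fallback points in this direction, but the invariance check you postpone is the entire content. A set of class representatives as in Lemma \ref{lem:subgroup_of_G} need not be closed under composition: $s_is_j$ and the chosen representative of $[s_is_j]$ agree only on $V(\Gamma_m)$ and may differ on the pieces.

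One way to complete your route is as follows. Take $\overline{T}$ torsion free, as the proofs of Theorems \ref{thm:main_Euclid} and \ref{thm:main_Hyp} provide. Then it acts freely on the vertices and edges of $\Gamma_m$, and hence on attaching sites. Fix a representative site $s$ in each $\overline{T}$-orbit and a lift $\tilde{g}\in G$ of each $g\in\overline{T}$. Let $\hat{h}$ act as $h$ on $V(\Gamma_m)$ and as $\widetilde{hg}\circ\tilde{g}^{-1}$ on the pieces attached at $gs$. Freeness makes $h\mapsto\hat{h}$ a well-defined injective homomorphism $\overline{T}\to\text{Aut}(\Gamma)$ with quasi-transitive image, and every nontrivial element of this image acts nontrivially on $\Gamma_m$. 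Your displacement argument then shows that its orbits are independent sets.
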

\begin{proof}
    The case that $\Gamma$ is 3-connected has been shown in Theorems \ref{thm:main_Euclid} and \ref{thm:main_Hyp}. Assume that $\Gamma$ is not 3-connected. Then there exists a topological subgraph $\Gamma_m$ that is locally-finite, planar with 1-end and is 3-connected. Under Theorem \ref{thm:Babai quasi-transitive} let $M_m$ be its embedding into a natural geometry $\mathcal{X}\in\{\EE^2,\HH^2\}$ so that $\text{Aut}(\Gamma_m)=\text{Aut}(M_m)=:G_m$ can be identified with a discrete group of isometries of $\mathcal{X}$.\\
    We proceed by using $M_m$ to construct an embedding $M$ of the full graph $\Gamma$, such that $\overline{G}\leq G_m\leq \text{Isom}(\mathcal{X})$ acts quasi-transitively on $M$. Then we may apply Theorems \ref{thm:main_Euclid} and \ref{thm:main_Hyp} to find a finite index subgroup $T\leq \overline{G}$ and a vertex colouring $\mathcal{C}$ that is $T$-invariant.
    Then as $T\leq\overline{G}\leq G=\text{Aut}(\Gamma)$ (Lemma \ref{lem:subgroup_of_G}), we have found a periodic colouring of $\Gamma$.

    To construct the embedded map $M$ of $\Gamma$ from $M_m$ we shall reintroduce the removed atoms into the surface. To do this we construct a sequence of maps $M_m,M_{m-1},\ldots, M_1,M$ for $\Gamma_m,\Gamma_{m-1},\ldots, \Gamma_1,\Gamma_0=\Gamma$ respectively so that $\overline{G}$ acts quasi-transitively on $M_i$ by isometries, for each $i\leq m$. 

    For each $\Gamma_i$ let $\mathcal{A}_i$ be the set of atoms in $\Gamma_i$ that are removed by the reduction to $\Gamma_{i+1}$.
    To construct $M_{i-1}$ from $M_i$ we need to reintroduce all the atoms in $\mathcal{A}_{i-1}$. We make the initial assumption that $\overline{G}$ acts quasi-transitively on $M_i$ as a group of $\mathcal{X}$-isometries and $V(M_i)$ is a locally finite collection of vertices. Since this is true for the case of $M_m$, the result will then follow inductively. Moreover as $\Gamma_i$ is locally-finite it follows that each $M_i$ is locally-finite. 
    We split into two cases;
    \begin{itemize}
        \item Suppose that $\Gamma_{i-1}$ is 1-connected but not 2-connected. Let $\mathcal{V}:=\{v\in V(M_i):v\in\partial A, A\in\mathcal{A}_{i-1}\}$. Then we may partition $\mathcal{V}$ by its $\overline{G}$ orbits so that $\mathcal{V}=\overline{G}v_1\sqcup \ldots \sqcup \overline{G}v_s$, for some set of orbit representatives $v_j, j\leq s$, and $s<\infty$.
        For all $A\in\mathcal{A}_{i-1}$ we have that $\partial A=\{u\}$ for some vertex $u\in \mathcal{V}$.
        Define $\mathcal{A}_{i-1}^{j}$ to be the set of atoms in $\mathcal{A}_{i-1}$ so that $\partial A\in \overline{ G}v_j$. Then we have the partition $\mathcal{A}_{i-1}=\mathcal{A}_{i-1}^{1}\sqcup \ldots \sqcup \mathcal{A}_{i-1}^s$. Moreover, since $\overline{G}\leq G$ it follows that for all $j\leq s$ that if $A,A'\in \mathcal{A}_{i-1}^j$ then $A$ and $A$ are isomorphic as subgraphs of $\Gamma$.
       Since $V(M_i)$ a locally finite collection of vertices, and the degree of each vertex in $V(M_i)$ is finite, at each $v_j$, $j\leq s$, we can find a closed connected set $Y_i\subset \mathcal{X}$ with non-empty interior such that $Y_j\cap (V(M_i)\cup E(M_i))=\{v_j\}$. 
       Since $\Gamma$ is planar, all its subgraphs are planar, and for $A\in\mathcal{A}_{i-1}^j$ there exists an embedding $\iota: A\rightarrow \mathring{Y_j}$, for all $j\leq s$. Additionally since each $A\in \mathcal{A}_{i-1}$ is finite, we may construct an edge curve $\alpha:[0,1]\rightarrow Y_j$ so that $\alpha(0)=\partial A= v_j$, $\alpha(1)$ is the appropriate adjoining vertex in $\iota(A)$, and so that $\alpha \cap (E(M_i)\cup E(\iota(A))=\emptyset$. We then repeat this embedding and edge arc isometrically for each $v\in\overline{G}v_j$ and $j\leq s$ to construct $M_{i-1}$. 
       It is then clear that $\overline{G}$ acts quasi-transitively on $M_{i-1}$ as a group of planar isometries. The fact that $V(M_{i-1})$ is a locally finite collection of vertices follows since any compact set can contain at most finitely mant vertices of $\mathcal{V}$, and within a bounded neighbourhood of each vertex of $\mathcal{V}$ only a finite number
       of new vertices have been added, so that the set $K\cap (V(M_{i-1})\setminus V(M_i))$ is finite for all compact $K\subset\mathcal{X}$.

        \item Suppose that $\Gamma_{i-1}$ was 2-connected, then let $\mathcal{E}_i:=\{\{u,v\}\in E(M_i): \{u,v\}\in E(\Gamma_i)\setminus E(\Gamma_{i-1})\}$. Each edge in $\{u,v\}\in E(M_i)$ is a path $\alpha_{u,v}:[0,1]\rightarrow \mathcal{X}$ from $u$ to $v$, so that if $\alpha_{u',v'}$ is a distinct edge path, then $\alpha_{u,v}\cap\alpha_{u',v'}=\emptyset$. 
        Without loss of generality suppose each $\alpha_{u,v}$ is parametrised so that the arc length from $\alpha_{u,v}(0)=u$ to $\alpha_{u,v}(t)$ is $t$ times the total path length. By Lemma \ref{lem:G_bar_acts_quasi-transitive_Gamma_m} $\overline{G}\leq G_m$ and there is an induced action by isometries of $\overline{G}$ on $\mathcal{X}$ that is quasi-transitive on $V(M_m)$. By previous steps we assume that the isometric action of $\overline{G}\leq \text{Isom}(\mathcal{X})$ is quasi-transitive on $M_i$. It is clear then that $\overline{G}$ acts quasi-transitively on $E(M_i)$, so that we may partition $\mathcal{E}_i=\overline{G}e_1\sqcup \ldots \sqcup \overline{G}e_s$, where $e_j=(u_j,v_j)\in E(M_i).$
        Furthermore, since $V(M_i)$ is a locally finite collection of vertices, and the edges fall into finitely many isometry classes (under action of $\overline{G}\leq \text{Isom}(\mathcal{X})$), we can conclude that the set of all edges of $M_i$ is also a locally finite collection of vertices in $\mathcal{X}$.
        As above we define sets $\mathcal{A}_i^j\subset\mathcal{A}_i$ so that $A\in\mathcal{A}_i^j$ if and only if $\partial A=\{u,v\}$ so that the edge $e=\{u,v\}\in \overline{G}e_j$. Then $ \mathcal{A}_i=\mathcal{A}_i^1\sqcup \ldots \sqcup\mathcal{A}_i^s $. Moreover, we note that since $\overline{G}\leq G_i$, for any two atoms $A,A'\in\mathcal{A}_i^j$ there exists $g\in G_i$ so that $gA=A'$, hence are isomorphic as subgraphs.
       Since the edge and vertex sets for $M_i$ are locally finite collection of vertices, for each $ge=(gu,gv)\in \overline{G}e_j$ we can find $r_j>0$ such that the the closed ball $\overline{B}(g\alpha_{u,v}(0.5),r_j)\cap V(M_i)=\emptyset=\overline{B}(g\alpha_{u,v}(0.5),r_j)\cap E(M_i)$.
        Since all $A\in \mathcal{A}_i^j$ are isomorphic and planar, for all $A\in\mathcal{A}_i^j$ we define an embedding $\iota_j:A\rightarrow \overline{B}(gx_j,r_j)$, and an edge path $\alpha:[0,1]\rightarrow \mathcal{X}$ from $A$ to $\{gv_j,gu_j\}=\partial A$, such that if $hA=A'$, $A,A'\in\mathcal{A}_i^j$  ,$h\in\overline{G}$, then the corresponding isometric action on $\mathcal{X}$ gives $h\iota_j(A)=\iota_j(A')$.
        Do this simultaneously for all $A\in\mathcal{A}_i$ to obtain a $M_{i-1}$ that is acted on quasi-transitively by $\overline{G}\leq \text{Isom}(\mathcal{X}).$ Following a similar argument to the above case we also have that $V(M_{i-1})$ is a locally finite collection of vertices.
    \end{itemize}
    Hence we may find an embedding $M$ for $\Gamma$ so that there is an quasi-transitive action on $M$ by $\overline{G}\leq \text{Isom}(\mathcal{X})$, and the result follows.\\ 
\end{proof}
In fact, in the Euclidean case we have actually proved a stronger result.
\begin{theorem}
    Let $\Gamma$ be a locally-finite quasi-transitive planar graph with 1-end, such that its 3-connected topological subgraph $\Gamma_m$ embeds into the Euclidean plane $\EE^2$ under Theorem \ref{thm:Babai quasi-transitive}. Then $\Gamma$ permits a periodic 5-colouring. 
\end{theorem}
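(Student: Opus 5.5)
The plan is to run the argument of Theorem \ref{thm:big_thm_graph_permit_col} verbatim, and only replace the final colouring step by Theorem \ref{thm:main_Euclid} in place of Theorem \ref{thm:main_Hyp}.

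\textbf{Step 1 (reduction).} Since $\Gamma_m$ embeds in $\EE^2$ under Theorem \ref{thm:Babai quasi-transitive}, take $\mathcal{X}=\EE^2$ throughout. If $\Gamma$ is itself 3-connected then $\Gamma=\Gamma_m$, and Theorem \ref{thm:Euc_5_col} (equivalently Theorem \ref{thm:main_Euclid} combined with Lemma \ref{lem:locally finite collection}) already gives the result. Otherwise, apply Lemma \ref{lem:subgroup_of_G} and Lemma \ref{lem:G_bar_acts_quasi-transitive_Gamma_m}. These realise $\overline{G}$ both as a subgroup of $G=\text{Aut}(\Gamma)$ and as a group of Euclidean isometries acting quasi-transitively on $M_m$.

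\textbf{Step 2 (rebuilding the embedding).} Run the inductive re-insertion of atoms from the proof of Theorem \ref{thm:big_thm_graph_permit_col}. This produces a map $M$ of the whole graph $\Gamma$ in $\EE^2$ on which $\overline{G}$ acts quasi-transitively by isometries. The same proof also shows that $V(M)$ is a locally finite collection of vertices. We must also check that $M$ is 1-ended: $M$ realises $\Gamma$, and $\Gamma$ is 1-ended by hypothesis.

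\textbf{Step 3 (5-colouring).} All hypotheses of Theorem \ref{thm:main_Euclid} now hold for $M$ and $\overline{G}$. It therefore yields a proper colouring $\mathcal{C}:V(M)\to\{1,\dots,5\}$ invariant under a finite index translation subgroup $T\le\overline{G}$.

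\textbf{Step 4 (periodicity).} Since $T\le\overline{G}\le G$, the colouring $\mathcal{C}$ is preserved by $T$ acting on $\Gamma$. Because $T$ has finite index in $\overline{G}$, and $\overline{G}$ acts quasi-transitively on $V(\Gamma)$, the group $T$ has finitely many vertex orbits. Hence $\mathcal{C}$ is a periodic 5-colouring of $\Gamma$.

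The main obstacle is Step 4. Quasi-transitivity of $\overline{G}$ on all of $V(\Gamma)$, not just on $V(\Gamma_m)$, is what Step 2 actually uses, since it requires the new vertices to fall into finitely many $\overline{G}$-orbits. I would justify it from the construction itself: each reinserted atom is placed $\overline{G}$-equivariantly, and the atoms attached to a given edge or vertex orbit are finite and pairwise isomorphic. So the finitely many orbits of attachment sites give finitely many vertex orbits overall.

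A secondary point is the genuinely Euclidean input in Theorem \ref{thm:main_Euclid}, namely that a wallpaper group has translation subgroups with arbitrarily large fundamental parallelograms. This needs $\overline{G}$ to act on the 1-ended map $M$, which is where Lemma \ref{lem:wallpaper} is applied to $M$ rather than to $M_m$.
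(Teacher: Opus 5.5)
Your proposal is correct and follows the paper's route: the paper proves this theorem only by noting that the proof of Theorem \ref{thm:big_thm_graph_permit_col} (reduce to $\Gamma_m$, realise $\overline{G}$ as isometries of $\EE^2$, re-insert the atoms equivariantly to get a map $M$ of $\Gamma$ with $V(M)$ a locally finite collection) goes through with Theorem \ref{thm:main_Euclid} in place of Theorem \ref{thm:main_Hyp}, which gives a $T$-invariant 5-colouring for some finite index $T\le\overline{G}\le G$. Your Step 4, checking that $T$ has finitely many vertex orbits on $\Gamma$ because it has finite index in the quasi-transitive group $\overline{G}$, makes explicit a detail the paper leaves implicit, and it is consistent with the paper's argument.
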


\section{Discussion and open problems}\label{sec:discussion}

\textbf{Edge colourings and orientations.} In addition to periodic vertex colourings \cite{abrishami2025periodiccoloringsorientationsinfinite} also discusses the existence of periodic \textit{edge colourings} and \textit{orientations}. For brevity we do not include the precise definitions here, and direct the interested reader to the article in question. 
Observe that if there exists a periodic vertex colouring for a given graph, a periodic orientation can be defined by any total ordering on the set of colours. Together with the work in \cite{abrishami2025periodiccoloringsorientationsinfinite} we may conclude:
\begin{corollary}
    Let $\Gamma$ be a locally-finite, quasi-transitive planar graph with finitely many ends. Then $\Gamma$ permits a periodic orientation.
\end{corollary}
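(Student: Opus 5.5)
The corollary states that every locally-finite, quasi-transitive planar graph $\Gamma$ with finitely many ends admits a periodic orientation. The plan is to reduce this to the existence of a periodic proper vertex colouring. There are two ingredients: a direct conversion from colourings to orientations, and a case split on the number of ends using Babai's trichotomy \cite[Proposition 2.1]{Babai97}.

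\textbf{Conversion.} Let $\mathcal{C}:V(\Gamma)\rightarrow\{1,\ldots,m\}$ be a proper vertex colouring that is invariant under a subgroup $H\leq\text{Aut}(\Gamma)$ acting quasi-transitively on $\Gamma$. Orient each edge $\{u,v\}$ from $u$ to $v$ exactly when $\mathcal{C}(u)<\mathcal{C}(v)$. This is well defined because $\mathcal{C}$ is proper, so the two endpoints always receive distinct colours.

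For any $h\in H$, invariance gives $\mathcal{C}(hu)=\mathcal{C}(u)$ and $\mathcal{C}(hv)=\mathcal{C}(v)$. Hence $h$ sends the arc $u\rightarrow v$ to the arc $hu\rightarrow hv$, so $H$ preserves the orientation. Since $H$ still has finitely many vertex orbits, the oriented graph is quasi-transitive under its orientation-preserving automorphisms, i.e.\ the orientation is periodic in the sense of \cite{abrishami2025periodiccoloringsorientationsinfinite}. I would check this against the precise definition there, which should also only ask for finitely many orbits under orientation-preserving automorphisms.

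\textbf{Case split.} An infinite locally-finite quasi-transitive graph has $1$, $2$ or infinitely many ends, so with finitely many ends $\Gamma$ is either finite, $1$-ended or $2$-ended.
\begin{itemize}
\item If $\Gamma$ is finite, orient it arbitrarily: a finite graph has only finitely many orbits under any group, so every orientation is periodic.
\item If $\Gamma$ is $2$-ended, it has bounded pathwidth, and \cite{abrishami2025periodiccoloringsorientationsinfinite} supplies a periodic colouring. Alternatively one can cite their orientation result directly.
\item If $\Gamma$ is $1$-ended, Theorem \ref{thm:big_thm_graph_permit_col} supplies a periodic proper vertex colouring.
\end{itemize}
In each infinite case the conversion step then produces a periodic orientation.

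\textbf{Possible obstacle.} The only delicate point is whether the periodic colourings from \cite{abrishami2025periodiccoloringsorientationsinfinite} and from Theorem \ref{thm:big_thm_graph_permit_col} are \emph{proper}. The construction of Theorem \ref{thm:main_Hyp}, which colours each vertex of $M/T$ distinctly, and Thomassen's theorem, used in Theorem \ref{thm:main_Euclid}, both give proper colourings. Beyond this, the argument is a direct verification.
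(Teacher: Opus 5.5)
Your proposal is correct and matches the paper's argument: the paper likewise orients each edge by a total order on the colours of a periodic proper colouring, taking that colouring from \cite{abrishami2025periodiccoloringsorientationsinfinite} in the $2$-ended case and from Theorem \ref{thm:big_thm_graph_permit_col} in the $1$-ended case. Your explicit check that the colour-preserving subgroup also preserves the orientation, and your separate handling of finite graphs, simply spell out what the paper compresses into one sentence.
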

For the existence of periodic edge colourings, we would like to follow a similar argument as \cite[Corollary 5.5]{abrishami2025periodiccoloringsorientationsinfinite} (the analogous result for 2-ended graphs) and prove existence by passing to the \textit{line graph}. However, since the line graph of a planar graph is not necessarily planar there are restrictions on this approach.
We have the following two results of Sedláček, and Greenwell and Hemminger that give two equivalent conditions for a graph to have a planar line graph. 
For two graphs $\Gamma_1,\Gamma_2$ we
define the \textit{join} $\Gamma_1+\Gamma_2$ to be graph with vertex set $V(\Gamma_1+\Gamma_2)=V(\Gamma_1)\cup V(\Gamma_2)$, and edge set $E(\Gamma_1+\Gamma_2)=E(\Gamma_1)\cup E(\Gamma_2)\cup \{\{u,v\}:u\in V(\Gamma_1), v\in V(\Gamma_2)\}$.

 \begin{theorem}[\cite{GREENWELL197231}]\label{thm:line_2}
    A graph $\Gamma$ has a line graph $L(\Gamma)$ that is planar if and only if it has no subgraph isomorphic to $K_{3,3},\,K_{1,5},\,P_4+K_1$ or $K_2+\Bar{K}_3$.
\end{theorem}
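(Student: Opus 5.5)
The plan is to reduce the statement to Sedláček's structural characterisation of graphs with planar line graph and then check that each excluded configuration is exactly what is needed to force the three clauses of that characterisation. The characterisation is: $L(\Gamma)$ is planar if and only if $\Gamma$ is planar, every vertex has degree at most $4$, and every vertex of degree $4$ is a cut vertex. I read the excluded configurations in the homeomorphic sense (subgraphs homeomorphic to the four listed graphs), since for subdivided configurations this is the form in which the argument closes. The basic tool in both directions is monotonicity. If $H\subseteq\Gamma$, then $L(H)$ is an induced subgraph of $L(\Gamma)$. If $H'$ is a subdivision of $H$, then contracting, for each subdivided path, the edges of $L(H')$ joining consecutive path-edges recovers $L(H)$, so $L(H)$ is a minor of $L(H')$. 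Hence nonplanarity of $L(H)$ propagates to every graph containing a subdivision of $H$.

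\textbf{Necessity.} I would verify directly that the line graph of each excluded graph is nonplanar.
\begin{itemize}
\item $L(K_{1,5})=K_5$.
\item $L(K_{3,3})$ is the $3\times 3$ rook's graph $K_3\square K_3$. Contracting one row to a vertex and exhibiting a $K_{3,3}$ minor in the rest gives nonplanarity.
\item For $P_4+K_1$ (the fan on five vertices) the hub has degree $4$. Its four spoke-edges form a $K_4$ in the line graph, and the three path edges attach to consecutive pairs of spokes. Contracting one path-edge vertex into a spoke produces a $K_5$ minor.
\item $K_2+\overline{K}_3$ is handled similarly. The two apex vertices each have degree $4$, and their spoke cliques share the vertex corresponding to the edge of $K_2$. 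A direct check finds a $K_5$ or $K_{3,3}$ minor.
\end{itemize}
By the monotonicity remark, if $\Gamma$ contains any of the four configurations then $L(\Gamma)$ is nonplanar.

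\textbf{Sufficiency.} Assume $\Gamma$ avoids all four configurations; I would check Sedláček's three clauses in turn.
\begin{itemize}
\item \emph{Planarity of $\Gamma$.} If $\Gamma$ contained a subdivision of $K_{3,3}$, that is excluded outright. If it contained a subdivision of $K_5$, then a branch vertex together with the subdivided cycle through the other four branch vertices gives a subdivided wheel, which contains a subdivision of $P_4+K_1$. So by Kuratowski's theorem $\Gamma$ is planar.
\item \emph{Degree bound.} Excluding $K_{1,5}$ gives $\Delta(\Gamma)\le 4$.
\item \emph{Degree-$4$ vertices.} Let $v$ have degree $4$ with neighbours $a,b,c,d$, and suppose $v$ is not a cut vertex. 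Then $\Gamma-v$ contains paths linking the neighbours. Take a minimal connected subgraph $S$ of $\Gamma-v$ containing $a,b,c,d$; it is a subdivided tree with at most two branch points. If $S$ is a path through all four neighbours, then $v$ together with $S$ is a subdivided $P_4+K_1$. Otherwise $S$ has a branch point $w$. Arguing on the shape of $S$, I would show that $v$, $w$ and the arms of $S$ produce a subdivided $K_2+\overline{K}_3$ or $P_4+K_1$. Each case is a contradiction.
\end{itemize}

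Finally, in either the Sedláček route or a direct argument, I would confirm that these three conditions make $L(\Gamma)$ planar, by explicitly building a plane drawing from a plane drawing of $\Gamma$. Place a vertex of $L(\Gamma)$ near the midpoint of each edge. For a vertex of degree at most $3$, draw the resulting clique on at most $3$ vertices inside a small disc around it. For a cut vertex of degree $4$, the four incident edges split between at least two blocks. Within each block the edges at $v$ are consecutive in the rotation at $v$, so the $K_4$ on those edge-vertices can be drawn planarly by routing one crossing-prone edge around the outside through a face at $v$ that is incident to two blocks.

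The main obstacle is the degree-$4$ non-cut-vertex case analysis. Pinning down which of the two fan-type configurations arises for every shape of the connecting tree $S$ is where care is needed, and where the homeomorphic (rather than literal subgraph) reading of the statement is used.
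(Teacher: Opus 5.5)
Your argument is correct, though there is nothing in the paper to compare it with: the paper does not prove this statement, it only cites Greenwell and Hemminger. Your route is the standard derivation of the forbidden-homeomorph form from Sedláček's criterion: monotonicity of line-graph planarity under subgraphs and subdivisions for one direction, and checking planarity, $\Delta\le 4$ and the cut-vertex condition for the other.

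Two points about the statement itself should be made explicit.

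\emph{Your homeomorphic reading is necessary, not just convenient.} With ``isomorphic'' as printed, the statement is false. A once-subdivided $K_{3,3}$ has maximum degree $3$, and its six branch vertices are pairwise non-adjacent, so it contains none of the four graphs as a literal subgraph. Yet by your own monotonicity remark, $L(K_{3,3})=K_3\square K_3$ is a minor of its line graph. The homeomorphic version is what Greenwell and Hemminger actually prove.

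\emph{Your statement of Sedláček's criterion is the right one.} You include the clause ``$\Gamma$ is planar''. The paper's statement of Sedláček's theorem omits it, and it cannot be dropped: $K_{3,3}$ satisfies the degree conditions but has nonplanar line graph.

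The case analysis you flag as the main obstacle does close. Let $a,b,c,d$ be the neighbours of $v$ and $S$ the Steiner tree in $\Gamma-v$.
\begin{enumerate}
\item If $S$ is a path, then $v$ together with $S$ is a subdivided fan.
\item If $S$ is a spider centred at a neighbour, say $d$, with arms to $a,b,c$, or a spider centred at $w\notin\{a,b,c,d\}$ with four arms, then $v$ and the centre are joined by four internally disjoint paths. That is a subdivided $K_2+\overline{K}_3$.
\item If $S$ is a spider centred at $w\notin\{a,b,c,d\}$ with leaves $a,b,c$ and with $d$ interior to the arm towards $a$, take the fan with hub $v$ on the path $a\cdots d\cdots w\cdots b$, routing the spoke to $w$ through $c$.
\item If $S$ has two branch vertices $w_1$ (arms to $a,b$) and $w_2$ (arms to $c,d$), take the fan with hub $v$ on $a\cdots w_1\cdots w_2\cdots c$, routing the spokes to $w_1,w_2$ through $b$ and $d$.
\end{enumerate}
Since $S$ has at most four leaves, these cases are exhaustive.

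A few small repairs are needed.
\begin{itemize}
\item For $P_4+K_1$, contracting a single path-edge vertex into a spoke does not yet give a $K_5$. Instead take the four spokes as singleton branch sets, and the three path-edge vertices as the fifth; they induce a path adjacent to every spoke.
\item For $K_3\square K_3$, the $K_{3,3}$ minor must use the contracted row vertex, since the prism on the remaining two rows is planar. The contracted graph is the prism plus an apex. This is exactly $L(K_2+\overline{K}_3)$, with apex the vertex $xy$, so one check covers both cases.
\item In your optional direct drawing, the edges of a block at a cut vertex need not be consecutive in an arbitrary plane embedding. For example, a pendant edge can sit inside a cycle through $v$ and another outside it, so you would first have to re-embed. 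Since you can cite Sedláček, this is inessential.
\item Kuratowski's and Sedláček's theorems concern finite graphs, but the paper applies this result to infinite locally finite graphs, so a compactness step is needed there. The forbidden-homeomorph form handles it cleanly: the hypothesis passes to finite subgraphs, every finite subgraph of $L(\Gamma)$ lies in $L(\Gamma')$ for some finite $\Gamma'\subseteq\Gamma$, and a countable graph is planar if all its finite subgraphs are.
\end{itemize}
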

\begin{theorem}[\cite{Sedla_line_graph}]\label{thm:line_1}
    A graph $\Gamma$ has a line graph $L(\Gamma)$ that is planar if and only if the maximum degree of any vertex is 4, and each vertex of degree 4 is a cut vertex.
\end{theorem}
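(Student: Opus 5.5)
We read the statement, as in Sedláček's original formulation, with $\Gamma$ itself planar. In this section $\Gamma$ is always planar, and planarity of $\Gamma$ is in any case forced: for $\deg\le 4$ a subdivision of $K_5$ or $K_{3,3}$ in $\Gamma$ yields a nonplanar minor of $L(\Gamma)$. Note that the statement's ``the maximum degree of any vertex is 4'' must mean \emph{at most} 4. We first prove the equivalence for finite graphs, and then pass to locally finite countable graphs. The passage uses the standard compactness fact that a countable graph is planar if and only if all of its finite subgraphs are. Every finite subgraph of $L(\Gamma)$ lies in $L(\Gamma')$ for some finite $\Gamma'\subseteq\Gamma$, and the degree and cut-vertex conditions pass to suitable finite subgraphs. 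To see the last point, take $\Gamma'$ to contain all edges at each degree-4 vertex of interest. If $v$ is a cut vertex of $\Gamma$ and $\Gamma'$ is connected, then $v$ is still a cut vertex of $\Gamma'$.

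\textbf{Necessity.} Suppose some vertex $v$ has degree at least $5$. Its incident edges are pairwise adjacent in $L(\Gamma)$, so they span a $K_5$ and $L(\Gamma)$ is not planar. Now suppose $\deg v=4$, with edges $e_i=va_i$ for $i=1,\dots,4$, and $v$ is not a cut vertex. Then $\Gamma-v$ is connected, so it contains a finite tree $S$ containing $a_1,\dots,a_4$. The tree $S$ has at least $4$ vertices, so every $a_i$ is incident to some edge of $S$. The line graph $L(S)$ is connected, and each $e_i$ is adjacent in $L(\Gamma)$ to an edge of $S$ at $a_i$. Contracting $L(S)$ to a single vertex therefore gives a vertex joined to all of $e_1,\dots,e_4$, which already span a $K_4$. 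This is a $K_5$ minor, so $L(\Gamma)$ is nonplanar.

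\textbf{Sufficiency.} Fix a planar embedding of $\Gamma$ and use the medial-graph construction. Place a vertex of $L(\Gamma)$ at the midpoint of each edge. Around each vertex $v$, join the midpoints of cyclically consecutive edges by arcs inside a small disc about $v$. When $\deg v\le 3$ this produces all adjacencies among the edges at $v$, since consecutive pairs are then all pairs. When $\deg v=4$ it produces only a $4$-cycle, and the two diagonals are missing. This is where the cut-vertex hypothesis enters, and it is the main obstacle. We induct on the number of degree-4 vertices. Let $v$ be such a vertex and split $\Gamma=\Gamma_1\cup\Gamma_2$ at $v$ with $V(\Gamma_1)\cap V(\Gamma_2)=\{v\}$. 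There are two cases according to how the four edges at $v$ split.
\begin{itemize}
\item \emph{Split $1+3$.} Let $e$ be the single edge in $\Gamma_1$. By induction $L(\Gamma_2)$ has an embedding in which the three edges at $v$ form a triangle bounding a face; in the medial construction this is the face inside the disc about $v$. Embed $L(\Gamma_1)$ inside that face, with $e$ on its outer face, and join $e$ to the three triangle vertices.
\item \emph{Split $2+2$.} Let $f_1,f_2$ be the edges at $v$ in $\Gamma_1$ and $g_1,g_2$ those in $\Gamma_2$. Embed $L(\Gamma_1)$ and $L(\Gamma_2)$ so that the edges $f_1f_2$ and $g_1g_2$ lie on their outer faces. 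Place the two embeddings side by side and add the four cross edges $f_ig_j$, which form a $K_{2,2}$ between two outer edges and can be drawn without crossings.
\end{itemize}

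The technical point to verify is the invariant that, for the chosen vertex $v$, the relevant edges can be made to lie on a common face. We would secure this by strengthening the induction hypothesis. The strengthened hypothesis asserts that for any vertex $u$ of degree at most $3$, the embedding of $L$ can be chosen so that the clique on the edges at $u$ bounds the outer face. For the base case of no degree-4 vertices this follows from the medial construction by choosing the outer face appropriately, since any face of a planar embedding can be made outer. In the inductive step the same property must then be maintained through each gluing.
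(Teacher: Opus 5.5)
Your necessity argument (the $K_5$ on five edges at a vertex, and the $K_5$ minor obtained by contracting $L(S)$) and the compactness reduction are sound. Adding planarity of $\Gamma$ for sufficiency is a genuine correction, not a matter of reading: $K_{3,3}$ has maximum degree $3$, yet $L(K_{3,3})$ is nonplanar. The paper only cites Sedl\'a\v{c}ek and gives no proof, so there is nothing there to compare your route with.

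The gap is in sufficiency, at exactly the step you defer. Write $K(w)$ for the clique on the edges at $w$. Your strengthened hypothesis handles one prescribed vertex at a time, and with it the induction does not close. Suppose you want it for $\Gamma$ and a degree-$3$ vertex $u$, where the split at $v$ is $1+3$ and $u\in\Gamma_2-v$. Then you need a single embedding of $L(\Gamma_2)$ in which $K(v)$ bounds a face (to receive $L(\Gamma_1)$) and $K(u)$ also bounds a face. Applying the hypothesis to $\Gamma_2$ gives only one of these per embedding. The medial drawing is unavailable when $\Gamma_2$ still has degree-$4$ vertices. Passing to $\Gamma_2$ plus a pendant edge at $v$ is circular when $\Gamma_1$ is a single edge, since that graph is $\Gamma$ itself.

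The repair is to carry one embedding for all vertices at once: $L(\Gamma)$ has an embedding in which $K(w)$ bounds a face for every $w$ of degree $3$. The medial drawing gives this in the base case, because all vertex faces are present simultaneously. Parallel arcs and loops arise only at vertices of degree at most $2$, so deleting them leaves the triangular vertex faces intact.

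In the $2+2$ gluing, any face containing the edge $f_1f_2$ may be made outer. Such a face is never bounded by a required triangle, since $f_1,f_2$ meet only at $v$, which has degree $2$ in $\Gamma_1$. So this case needs no invariant at all.

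In the $1+3$ gluing, $e$ lies in no required triangle of $\Gamma_1$ except possibly $K(a_1)$. If $\deg_{\Gamma_1}a_1=3$, then $e$ has degree $2$ in $L(\Gamma_1)$, and its two corners lie on opposite sides of the triangle $K(a_1)$. So you can take as outer face of $L(\Gamma_1)$ a face at $e$ other than the one bounded by $K(a_1)$. Inserting $L(\Gamma_1)$ into the face of $K(v)$ in $L(\Gamma_2)$ then destroys only that face, which is no longer required since $\deg_\Gamma v=4$. Every other required triangle face survives, because distinct vertices have distinct triangles.

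You should also record that degree-$4$ vertices of $\Gamma_j$ remain cut vertices there, since all their edges lie in $\Gamma_j$. With this simultaneous invariant, your induction on the number of degree-$4$ vertices goes through.
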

\begin{corollary}
    If $\Gamma$ is a locally finite, quasi-transitive planar graph with 1-end satisfying the hypotheses of Theorems \ref{thm:line_1} or \ref{thm:line_2}, then there exists a periodic edge colouring of $\Gamma$.
\end{corollary}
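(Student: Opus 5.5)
The plan is to pass to the line graph and reuse the vertex-colouring machinery. A periodic edge colouring of $\Gamma$ is exactly a periodic proper vertex colouring of the line graph $L(\Gamma)$. Here periodic means invariant under a subgroup of $\text{Aut}(\Gamma)$, acting on $L(\Gamma)$, that has finitely many orbits on $E(\Gamma)$. So it is enough to show that $L(\Gamma)$ satisfies the hypotheses of Theorem \ref{thm:big_thm_graph_permit_col}, and that the invariance group produced there can be realised inside the group induced by $\text{Aut}(\Gamma)$.

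\textbf{Step 1: $L(\Gamma)$ meets the hypotheses.} By Theorem \ref{thm:line_1}, or equivalently Theorem \ref{thm:line_2}, $L(\Gamma)$ is planar. It is locally finite, since every vertex of $\Gamma$ has degree at most $4$, so an edge meets at most $6$ others. Every automorphism of $\Gamma$ permutes edges and preserves incidence, so it induces an automorphism of $L(\Gamma)$. Local finiteness of $\Gamma$ leaves only finitely many edge orbits under $\text{Aut}(\Gamma)$, hence $L(\Gamma)$ is quasi-transitive.

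\textbf{Step 2: $L(\Gamma)$ has one end.} I expect this to be the main obstacle, because Theorem \ref{thm:line_1} forces every degree-$4$ vertex to be a cut vertex, so the structure of $\Gamma$ is delicate. I would first show directly that $L(\Gamma)$ is one-ended. A ray in $\Gamma$ gives a ray in $L(\Gamma)$ by taking consecutive edges. Conversely, a finite edge set $S$ in $L(\Gamma)$ corresponds to removing the finite set of endpoints of $S$ in $\Gamma$, together with finitely many further edges. Since $\Gamma$ is one-ended, all but one component of the complement is finite, so infinite components of $L(\Gamma)-S$ correspond to the unique infinite component of $\Gamma$ minus a finite set. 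I would also check that the cut-vertex condition is compatible with one-endedness: the components hanging off a cut vertex must be finite, and by quasi-transitivity of bounded size. If this degenerates, so that $\Gamma$ has no such configurations, the argument is unaffected.

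\textbf{Step 3: run the vertex-colouring machinery on $L(\Gamma)$.} Theorem \ref{thm:big_thm_graph_permit_col} yields a proper vertex colouring of $L(\Gamma)$, invariant under a finite-index subgroup $T$ of a subgroup of $\text{Aut}(L(\Gamma))$. To pull this back to $\Gamma$, I would rerun the proof with $G$ replaced by the image of $\text{Aut}(\Gamma)$ in $\text{Aut}(L(\Gamma))$; all the lemmas used only that some group acts quasi-transitively. Alternatively, Whitney's theorem says this map is an isomorphism apart from small exceptional cases, none of which are infinite. In either case $T$ comes from a subgroup of $\text{Aut}(\Gamma)$ with finitely many edge orbits, so the colouring is a periodic edge colouring of $\Gamma$.
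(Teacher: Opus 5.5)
Your proposal is correct and takes essentially the paper's route: check that $L(\Gamma)$ is locally finite, quasi-transitive, planar (via Theorem~\ref{thm:line_1} or~\ref{thm:line_2}) and one-ended, then apply Theorem~\ref{thm:big_thm_graph_permit_col} to $L(\Gamma)$. Your finite-cut argument for one-endedness is more careful than the paper's ray-correspondence sketch, and your Step 3 addresses a point the paper passes over silently: pulling the invariance group back into $\text{Aut}(\Gamma)$, either via Whitney's theorem (in Jung's extension to infinite graphs) or by rerunning the argument with the induced subgroup.
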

\begin{proof}
    Observe that the line graph $L(\Gamma)$ is also locally finite, connected and quasi-transitive, and by our hypotheses must be planar. In addition the rays of $\Gamma$ and $L(\Gamma)$ are in correspondence, and similarly given any pair of rays $r,r'$ in $\Gamma$ and their respective line graph rays $L(r), L(r')$ in $L(\Gamma)$, the infinite set of disjoint rays between vertices of $r$ and vertices of $r'$ are in correspondence with an infinite set of disjoint rays between vertices of $L(r)$ and vertices of $L(r')$. Hence we may conclude that $L(\Gamma)$ also has 1-end. Thus we conclude by applying Theorem \ref{thm:big_thm_graph_permit_col} to $L(\Gamma)$.\\
\end{proof}
It is certainly conceivable an alternative construction of a periodic edge colouring exists that is not dependent on the planarity of the line graph. For example by following similar techniques employed in this paper but focussing on the edges. This leaves the following open problem.
\begin{problem}
    For a locally-finite, quasi-transitive planar graph $\Gamma$ with 1-end is it possible to construct a periodic edge colouring without requiring the planarity of the line graph?
\end{problem}

\textbf{Constructibility of colourings.} Of course for any existence statement as we have proved above, and in particular for graph colourings, one should comment on the constructibility of said colourings. Suppose we have a graph $\Gamma$ and its automorphism group $G$. The first stage to examine is Babai's automorphism respecting embeddings; for a given locally-finite, quasi-transitive 3-connected graph with 1-end the construction laid out in \cite{Babai97} can be reduced to assigning an automorphism respecting rotation system to each of the vertices, and the geometry is then determined by the curvature of the induced 2-dimensional cell complex. The process of finding a maximal torsion free subgroup of a 2-dimensional discrete Euclidean or hyperbolic isometry group, is well studied, and not hard to identify up to its \textit{signature}\footnote{either Fuchsian signature (as in \cite{Singerman1970}, \cite{Fuchsian_Katok}) or Conway's orbifold signature (as in \cite{conway2008symmetries})} and isomorphism class (see \cite{Edmonds1982},\cite{Schattschneider}). 
In the Euclidean case, given the maximum edge length it is not hard to derive a subgroup with sufficient minimum translation length in order that the quotient map satisfies the hypotheses of Thomassen's theorem. However we are not able to define an explicit colouring from Thomassen's theorem, the proof is a non-constructive proof-by-contradiction and relies in part on the planar four colour map theorem.

In the hyperbolic case, we rely on the residual finiteness (or alternatively the stronger property of being locally extended residually finite or LERF) of Fuchsian groups to infer the existence of normal (or not necessarily normal) subgroups of sufficiently large minimum translation length. 
Whichever property we use, the desired subgroups are found as finite index subgroups of surface groups (which are themselves finite index in Fuchsian groups). Although constructing each subgroup $T_i$, not containing some $t_i$, would be a long path to tread, and require some translation of hyperbolic isometries to elements of the fundamental group of a quotient surface, the proofs provided in \cite{Hempel_72} and \cite{Lopez94} describe how to construct subgroups not containing a chosen element. 
However the process to identify the full list of all the elements to remove, that is elements whose translation lengths are too small, is not clear to the author. \\ 
Note that a similar approach can be applied to construct explicit periodic colourings in the Euclidean case, though of course with the number of colours depending on the index of the translation subgroup.
Finally, for graphs that are not 3-connected, the reduction technique to a 3-connected topological subgraph is explicitly constructive, with the algorithm provided as its definition in \S\ref{sec:not_3-con}.


\textbf{Reducing the number of colours in the hyperbolic case.} The application of Thomassen's Theorem \ref{thm:Thomassen} in the Euclidean case is remarkably powerful here, providing a uniform bound on the chromatic number of all such graphs.
In the hyperbolic case, we are not so fortunate. By relying on residual finiteness to construct our desired finite index subgroups it means that we have very little control over the size of the index, and consequently over the number of colours (see Remark \ref{rem:index_colours}). 
For any integer $B>0$ it is not hard to construct maps in the hyperbolic plane with the property that any subgroup with sufficiently large minimum translation length has index at least $B$. Then in contrast with the Euclidean case, for an arbitrarily large integer $C>0$ there exist graphs that embed into the hyperbolic plane such that the periodic colouring defined by the methods above uses at least $C$ colours, for arbitrarily large $C$.
As such it is desirable to be able to apply Theorem \ref{thm:Thomassen} to those graphs that embed into the hyperbolic plane. However constructing an embedding $M$ and a subgroup $H\leq \text{Aut}(M)$ so that the quotient map $M/H$ on the quotient surface $\HH^2/H$ has non-contractible cycles of sufficient size is unclear.
In the Euclidean case, although $2^{14g+6}$ is large, whenever we take a torsion free wallpaper group $T\leq \text{Isom}(\EE^2)$ the quotient surface is always a torus with genus $g=1$. This means one can continue to take larger index translation subgroups, and the genus of the quotient surface always stays constant. This fixes a minimum non-contractible cycle size to aim for, then we can take successively large index subgroups until the minimum translation distance is sufficient.
However in the hyperbolic plane this is never the case, as the genus of an index $n$ subgroup is determined by the Riemann-Hurwitz formula (see \cite{Singerman1970} for example).
That is, for a locally-finite, quasi-transitive map $M$ with 1-end in the hyperbolic plane, so that $V(M)$ is a locally finite collection and a Fuchsian group $G^+\leq\text{Isom}(\HH^2)$ acts quasi-transitively on $M$, if $T\leq G^+$ is a finite index torsion free subgroup of index $N$ then the genus $g_T$ of the quotient surface $\HH^2/T$ is given by
\begin{align}\label{eq:g_t}
    g_T=\frac{N}{2}\cdot\left[2g-2+\sum_{i=1}^r\left( 1-\frac{1}{m_i}\right)\right]+1
\end{align}
where $g$ is the genus of $\HH^2/G^+$, and $\{m_i\}_{i\leq r}$ is the set of finite orders of the elliptic generators of $G^+$.
As the expression inside the square brackets describes $1/2\pi$ times the area of a fundamental domain for $G^+$, which is always positive, it follows that the genus $g_T$ grows linearly and monotonically with the index.
This has the consequence that, each time we increase the index and pass to a further subgroup to remove small translation lengths, we are also increasing the genus of the subgroup quotient surface linearly. Then in turn this exponentially increases the minimum non-contractible cycle size required of the quotient map for Theorem \ref{thm:Thomassen} to be applied. The discrepancy of the growth between these two variables ensures that under the methods above there is no general construction that will force a quotient map to satisfy the hypotheses of Theorem \ref{thm:Thomassen}.  
Although this approach is unfruitful, the Euclidean results still suggest the following problem. 
\begin{problem}
    Does there exist a fixed integer $A$ such that all locally-finite, quasi-transitive planar graphs with 1-end, whose 3-connected topological subgraph embeds into the hyperbolic plane under Theorem \ref{thm:Babai quasi-transitive}, can be periodically coloured in $A$ colours?
\end{problem}
There is some additional control over the number of colours used in our construction that is permitted by the Ringel-Youngs theorem \cite{RingelYoungs68}, which states that for any map embedded on an orientable surface of genus $g>0$ there exists a proper vertex colouring with $\lfloor (7+\sqrt{1+48g})/2\rfloor$ colours. When colouring the quotient map $M/T$ in the proof of Theorem \ref{thm:main_Hyp} it follows then we can reduce the number of colours used from $|V(\Gamma/T)|$ colours to $\lfloor( 7+\sqrt{1+48g_T})/2\rfloor$
colours, where $g_T$ can be obtained from (\ref{eq:g_t}). 
Since $g_T$ is dependent only on $G$ and the maximum edge length of the embedding $M$ of $\Gamma$, one avenue to establish a lower bound for all graphs that embed into the hyperbolic plane would be by answering the following problem for hyperbolic isometry groups.  
\begin{problem}
   For a finitely generated discrete group of hyperbolic isometries $G$ and a distance $d>0$, what is the minimal index of a torsion free subgroup $T$ such that $T$ contains no elements whose translation length is less than or equal to $d$?
\end{problem}

\textbf{Acknowledgements.} The author would like to thank their supervisor James Anderson for the many helpful comments and discussions.  




\bibliography{biblography}

\end{document}

%% file: Preamble.tex
\usepackage{graphicx} 
\usepackage{amsmath}  
\usepackage{amsfonts} 
\usepackage{enumitem}
\usepackage[parfill]{parskip}

\usepackage[scale=0.8,vmarginratio={1:2},heightrounded]{geometry}

\usepackage{amsthm} 
\usepackage[affil-it]{authblk} 
\usepackage{amsopn} 
\usepackage{amssymb} 
\usepackage{mathrsfs} 
\usepackage{booktabs} 
\usepackage{natbib} 
\usepackage{tikz} 
\usepackage{algorithm} 
\usepackage{float} 
\usepackage{caption} 
\usepackage{abstract} 
\usepackage{listings} 
\usepackage{color} 

\usepackage[pagebackref]{hyperref} 
\usepackage{pdfpages}

\newfloat{algorithm}{t}{lop}

\numberwithin{equation}{section}

\newcommand{\ZZ}{\mathbb{Z}}

\newcommand{\EE}{\mathbb{E}}
\newcommand{\CC}{\mathbb{C}}

\newcommand{\HH}{\mathbb{H}}

\newtheorem{lemma}{Lemma}[section]
\newtheorem{theorem}[lemma]{Theorem}
\newtheorem{proposition}[lemma]{Proposition}
\newtheorem{corollary}[lemma]{Corollary}
\newtheorem{problem}{Problem}[section]

\theoremstyle{definition}
\newtheorem{definition}[lemma]{Definition}

\theoremstyle{remark}
\newtheorem{remark}{Remark}[section]

\DeclareSymbolFont{bbold}{U}{bbold}{m}{n}
\DeclareSymbolFontAlphabet{\mathbbold}{bbold}